\theoremstyle{plain}
\newtheorem{theorem}{Theorem}[section]
\newtheorem{corollary}[theorem]{Corollary}
\newtheorem{claim}[theorem]{Claim}
\newtheorem{lemma}[theorem]{Lemma}
\newtheorem{conjecture}[theorem]{Conjecture}
\newcommand{\vast}{\bBigg@{4}}
\newcommand{\Vast}{\bBigg@{5}}
\theoremstyle{definition}
\def\namedlabel#1#2{\begingroup
    #2%
    \def\@currentlabel{#2}%
    \phantomsection\label{#1}\endgroup
}
\pgfplotsset{compat = 1.16}
\newcommand{\Aa}{\mathcal A}
\title{\scshape
  Annulus graphs in $\mathbb R^d$}
\author[1,2]{Lyuben Lichev}
\author[3]{Tsvetomir Mihaylov}
\affil[1]{Universit\'e Jean Monnet, Saint-Etienne, France}
\affil[2]{Institute of Mathematics and Informatics, Bulgarian Academy of Sciences, Sofia, Bulgaria}
\affil[3]{University of Birmingham, Birmingham, UK}
\begin{document}

\maketitle
 
\begin{abstract}
A $d$-dimensional annulus graph with radii $R_1$ and $R_2$ (here $R_2\ge R_1\ge 0$) is a graph embeddable in $\mathbb R^d$ so that two vertices $u$ and $v$ form an edge if and only if their images in the embedding are at distance in the interval $[R_1, R_2]$. In this paper we show that the family $\mathcal A_d(R_1,R_2)$ of $d$-dimensional annulus graphs with radii $R_1$ and $R_2$ is \emph{uniquely} characterised by $R_2/R_1$ when this ratio is sufficiently large. Moreover, as a step towards a better understanding of the structure of $\mathcal A_d(R_1,R_2)$, we show that $\sup_{G\in \mathcal A_d(R_1,R_2)} \chi(G)/\omega(G)$ is given by $\exp(O(d))$ for all $R_1,R_2$ satisfying $R_2\ge R_1 > 0$ and also $\exp(\Omega(d))$ if moreover $R_2/R_1\ge 1.2$.
\end{abstract}

\section{Introduction}
In this paper, we are interested in graphs constructed as follows. Fix two real numbers $R_1,R_2 \ge 0$ satisfying $R_2\ge R_1$. The family of \emph{$d$-dimensional annulus graphs with radii $R_1$ and $R_2$}, denoted by $\Aa_d(R_1,R_2)$, consists of the graphs $G$ whose vertex set can be embedded in $\mathbb R^d$ so that, for all pairs of different vertices $u,v\in V(G)$, $uv$ is an edge of $G$ if and only if the distance between $u$ and $v$ in the embedding is in the interval $[R_1,R_2]$. We call any such embedding an $(R_1,R_2)$-\emph{annulus embedding}, or just an annulus embedding, of the graph $G$. In the sequel annulus graphs will be assumed finite unless explicitly defined as infinite.

The motivation for studying annulus graphs is twofold. To begin with, the notion interpolates between two classical models in graph theory: unit disc graphs and unit distance graphs. The family of $d$-dimensional unit disc graphs coincides with the family $\Aa_d(0,R)$ for every $R > 0$. Unit disc graphs were introduced in 1971 by Gilbert~\cite{Gil71} to model telecommunication networks. Since then, the most significant developments of the theory of unit disc graphs were made in the framework of random unit disc graph also known as random geometric graphs, see Penrose~\cite{Pen03} for a detailed account. The model was generalized by Waxman~\cite{Wax88}: he worked in a setting where two vertices in positions $x$ and $y$ are connected with probability $\beta \exp(-|x-y|/r)$ where $\beta$ and $r$ are parameters of the model. Penrose~\cite{Pen16} introduced and studied a percolated version of the model.

Another line of research was initiated in 1946 by Erd\H{o}s who was interested in the largest number of edges in a graph whose vertices may be embedded in $\mathbb R^2$ so that two vertices are at distance 1 in the embedding if and only if they form an edge in the graph. Erd\H{o}s~\cite{Erd46} himself was able to show a lower bound of the form $n^{1+c/\log\log n}$ for some constant $c > 0$ and offered a 500 dollar prize for a proof whether or not there is an upper bound of the same form. To our knowledge, the best currently known upper bound is proportional to $n^{4/3}$ and was provided by Spencer, Szemer\'edi and Trotter~\cite{SST84}. Another related problem is the Hadwiger-Nelson problem asking for the smallest number of colours in which the points of the plane may be coloured so that no two points at distance 1 are monochromatic. It has long been known that that the answer is between 4 and 7, and de Grey~\cite{deG18} improved the lower bound to 5 (result reproved independently by Exoo and Ismailescu~\cite{EI20}).

Another major motivation of the project is the (rather general) Goldilock's principle, which roughly states that ``objects that interact with each other should be neither too close nor too far from each other''. The principle appears in many areas such as cognitive science~\cite{Kid12} (infants prefer to occupy themselves with tasks that are neither too complex nor too simple), astronomy (the habitable zone around a star must be neither too close nor too far from it), economy (balancing high economic growth with low inflation) and machine learning~\cite{BL17} (concerning the learning rate of an algorithm) among others. 

Despite the interest attracted by the subject in many fields of science, to our knowledge rigorous mathematical analysis of annulus graphs was conducted only by Galhotra, Mazumdar, Pal and Saha~\cite{GMPS18}. There, the authors studied the threshold of connectivity of the $d$-dimensional random annulus graph $\mathcal G(a(\log n)^{1/d},b(\log n)^{1/d})$ with $a,b = O(1)$, which is obtained as follows. Consider a $d$-dimensional cube of side length $n^{1/d}$ and embed $n$ vertices uniformly at random. Then, connect two vertices if the distance between them is in the interval $[a(\log n)^{1/d}, b(\log n)^{1/d}]$. They show that there is a function $\varphi: \mathbb N\to \mathbb R_+$ such that, if $b^d-a^d < \varphi(d)$, then the graph is disconnected whp, while if $b^d-a^d > \varphi(d)$, then the graph is connected whp. A wider class of random intersection graphs with more general connection functions was studied in~\cite{DG16}.
\subsection{Our results}
While previous research concentrates on random annulus graphs, our results are deterministic in nature. Before presenting our first theorem, let us observe that, for any $d\in \mathbb N$ and $R_2 \ge R_1 > 0$, $\mathcal A_d(0,1)\neq \mathcal A_d(R_1,R_2)$. To see this, note that the family $\mathcal A_d(0,1)$ contains the family of all finite complete graphs while the clique number of every graph in $\mathcal A_d(R_1,R_2)$ is bounded.\footnote{Indeed, consider an $(R_1, R_2)$-\emph{annulus embedding} of a complete graph in $\mathbb R^d$: then, the vertices of the graph must be embedded at the centers of disjoint balls of radii $R_1/2$ which themselves are contained in a ball of radius $R_2$ (and center any of the embedded vertices, say). 
Thus, the smaller balls of radii $R_1/2$ pack a larger ball of radius $R_2+R_1/2$, and consequently there are at most $\left(\frac{R_2+R_1/2}{R_1/2}\right)^d$ smaller balls.} Another easy remark is that, for all $d\in \mathbb N$ and $R_2 \ge R_1 > 0$, $\mathcal A_d(R_1, R_2) = \mathcal A_d(1, R_2/R_1)$. The next result points in the direction of distinguishing the families $\mathcal A_d(1,R)$.

\begin{theorem}\label{thm 1}
For every $d\in \mathbb N$ there is a constant $C = C(d) > 0$ such that, for every pair of distinct real numbers $x,y \ge C$, $\mathcal A_d(1,x)\not\subseteq \mathcal A_d(1,y)$.
\end{theorem}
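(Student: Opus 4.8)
The plan is to first reduce to the normalized families $\Aa_d(1,\cdot)$ (the identity $\Aa_d(R_1,R_2)=\Aa_d(1,R_2/R_1)$ is already recorded in the excerpt) and then to split the statement according to whether $x>y$ or $x<y$. Since the claim quantifies over ordered pairs, establishing non-containment in both regimes is exactly what upgrades to pairwise incomparability of the families. The invariant I would track throughout is the packing (clique) number $N(r):=\sup\{\,|Q|:Q\subseteq\mathbb R^d\text{ with all pairwise distances in }[1,r]\,\}$, which is precisely the largest clique realizable in $\Aa_d(1,r)$. The footnote's ball-packing estimate gives $N(r)\le(2r+1)^d$, while a grid packing yields $N(r)\ge c_d\,r^d$; in particular $N$ is nondecreasing, tends to infinity, and its jumps become arbitrarily closely spaced as $r\to\infty$.

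\emph{Coarse separation.} Whenever $N(x)\neq N(y)$, say $N(x)>N(y)$, the complete graph $K_{N(x)}$ lies in $\Aa_d(1,x)\setminus\Aa_d(1,y)$, settling that instance at once. This already disposes of every pair $x,y$ falling in distinct constancy intervals of $N$, and it is why largeness of the ratio is natural: for large $r$ the intervals of constancy are short.

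\emph{Fine separation.} The genuine work is to separate two ratios $y<x$ lying in a common interval on which $N$ is constant, and symmetrically, for $x<y$, to produce a graph realizable at $x$ but not at the larger $y$. Here I would use \emph{decorated near-maximal cliques}: take a clique $Q$ of size close to $N(x)$ and attach one extra vertex $w$ with a prescribed adjacency pattern to $Q$ (for instance adjacent to a single vertex of $Q$, or to all but one). Once $Q$ is near-maximally packed, the region in which $w$ can sit so as to realize the prescribed pattern is controlled by how much slack the annulus $[1,r]$ leaves in the packing, and this slack varies continuously with $r$. By tuning $|Q|$ and the pattern of $w$, I expect the admissible-ratio set $S_H:=\{r:H\in\Aa_d(1,r)\}$ of the resulting graph $H$ to be essentially a half-open interval with a \emph{tunable} endpoint $\theta$; for large ratios the attainable endpoints should be dense. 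For the intra-interval case $y<x$ one uses a gadget that is unrealizable below $\theta$ and places $\theta\in(y,x)$, giving $H\in\Aa_d(1,x)\setminus\Aa_d(1,y)$; for $x<y$ one uses the mirror gadget, unrealizable once the ratio grows past $\theta\in[x,y)$.

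\emph{Main obstacle.} The crux is to make these thresholds sharp and to prove the \emph{negative} half — that the gadget is truly unrealizable at the other ratio — for every embedding in $\mathbb R^d$, not merely for the symmetric one we designed. The essential enemy is ``squashing'': a non-edge can always be realized by placing its endpoints closer than the minimum edge length, which tends to keep graphs realizable across all ratios. Defeating this requires forcing the relevant non-adjacent pair to remain at distance at least the minimum edge while staying within a controlled multiple of it, and the only scale-invariant leverage for this is the ambient dimension together with sphere-packing and volume constraints. Accordingly I would prove non-realizability by showing that a putative embedding at the wrong ratio would force more points into a ball than packing permits; the hypothesis $x,y\ge C(d)$ with $C(d)$ large is exactly what renders these estimates quantitatively tight enough to bite and to separate ratios that are arbitrarily close. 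Controlling \emph{all} embeddings simultaneously, rather than the intended one, is where I expect essentially all the difficulty to lie.
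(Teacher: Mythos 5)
Your proposal is a plan rather than a proof: the two claims on which it rests are exactly the ones left unestablished. The coarse separation via the clique invariant $N(r)$ is fine as far as it goes, but it can only ever handle pairs with $N(x)>N(y)$, hence only certain pairs with $x>y$; when $x<y$ every clique realizable at ratio $x$ is realizable at ratio $y$ (the same embedding works, since $[1,x]\subseteq[1,y]$), so cliques give nothing. Everything therefore hinges on the ``fine separation'', and there the key assertions are conjectural: you give no argument that the admissible-ratio set $S_H$ of a decorated clique is an interval at all (there is no monotonicity of realizability in $r$ --- indeed the theorem being proved says precisely that the families $\mathcal A_d(1,r)$ are pairwise incomparable), no argument that its endpoint behaves as a sharp threshold, and no argument that attainable thresholds are dense. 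Most importantly, the negative half --- that your gadget admits \emph{no} $(1,y)$-annulus embedding whatsoever --- is the entire content of the theorem, and you explicitly defer it (``where I expect essentially all the difficulty to lie''). Naming the obstacle (squashing of non-edges, the need for packing and volume bounds) is not the same as overcoming it, so the argument as written does not close.

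For comparison, the paper does not use small gadgets at all. It takes a large graph $G_{n,d}(x,\varepsilon)$ defined by the fine grid $\varepsilon\mathbb Z^d$ intersected with a ball of radius $n$, under its natural $(1,x)$-embedding, and proves a metric rigidity statement valid for \emph{every} putative $(1,y)$-embedding: pairs at distance at most $1$ in the natural embedding remain at distance at most $1$ (Lemma~\ref{lem 1}, proved by exhibiting $N_\gamma$ common neighbours that are pairwise far apart and invoking Lemma~\ref{3points}), and pairs at distance more than $x$ remain at distance more than $y$ (Lemma~\ref{cor of lem 1}, via the sphericity-type obstruction of Lemma~\ref{lem cyclicity}). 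Chaining these facts (Corollaries~\ref{cor1} and~\ref{exxpand}) shows that any $(1,y)$-embedding distorts all large-scale distances by a controlled factor, and then a packing-density computation, letting the ball radius tend to infinity, yields a volume inequality that is false both when $x<y$ and when $x>y$. This rigidity-plus-packing-density scheme is precisely the mechanism that controls all embeddings simultaneously --- the step your proposal correctly identifies as the crux but does not supply.
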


To push our investigation of the families $\mathcal A_d(1,R)$ further, we study the question whether or not these families are $\chi$-bounded. A family of graphs $\mathcal F$ is \emph{$\chi$-bounded} if there exists a function $f:\mathbb N\to \mathbb N$ such that, for every graph $G\in \mathcal F, \chi(G)\le f(\omega(G))$, where $\chi(G)$ denotes the chromatic number of $G$ and $\omega(G)$ denotes the size of the largest clique in $G$. Identifying $\chi$-bounded classes has become a hot trend in recent years, for a complete account we direct the reader to the outstanding survey of Scott and Seymour~\cite{SS20} on the subject. Particular attention was paid to \emph{intersection graphs}: given a collection $\mathcal F$ of sets, the intersection graph of $\mathcal F$ has vertex set $\mathcal F$ and edge set $\{(X,Y): X,Y\in \mathcal F, X\cap Y\neq \emptyset\}$. Asplund and Gr\"unbaum~\cite{AG60} showed that the family of intersection graphs of axis-parallel rectangles in the plane $\mathbb R^2$ is $\chi$-bounded. Surprisingly, this is not the case for 3-dimensional boxes as observed by Burling~\cite{Bur65} - he provided an explicit construction of a sequence of intersection graphs of boxes with a bounded clique number and chromatic number that tends to infinity. Intersection graphs of discs were shown to form a $\chi$-bounded class~\cite{BHMRR, Gra95} (in particular, for every unit disc graph $G$ one has $\chi(G)\le 6\omega(G)-5$). Moreover, for every unit disc graph $G$, Peeters~\cite{Pee91} showed that $\chi(G)\le 3\omega(G)-2$. His proof is based on an algorithm that, given an embedding of the vertex set of $G$ witnessing that $G$ is a unit disc graph, sweeps the points from bottom to top and greedily attributes a colour to a vertex in the moment when it is met. The proof of the upper bound in the next theorem uses a similar idea although the fact that an annulus is not a convex shape in general leads to some complications and requires additional ideas. For the lower bound, we construct a concrete embedding of a graph based upon a discretisation of the unit sphere $\mathbb S^{d-1}$.

\begin{theorem}\label{thm 2}
There exist constants $M,m > 1$ such that, for every $d\ge 2$:
\begin{enumerate}[(i)]
    \item\label{pt 1} for every $x\ge 1$ we have  
    \begin{equation*}
    \sup_{G\in \mathcal A_d(1,x)}\dfrac{\chi(G)}{\omega(G)}\le M^d;
    \end{equation*}
    \item\label{pt 2} for every $x\ge 1.2$ we have  
    \begin{equation*}
    m^d\le \sup_{G\in \mathcal A_d(1,x)}\dfrac{\chi(G)}{\omega(G)}.
    \end{equation*}
\end{enumerate}
Both bounds hold for unit disc graphs as well. Moreover, for $d = 1$ and every $x > 1$ one may ensure that $3/2 \le \sup_{G\in \mathcal A_1(1,x)} \chi(G)/\omega(G)\le M$.\footnote{The graphs in $\mathcal A_1(0,1)$ are also called unit interval graphs. It is well-known that the clique number and the chromatic number of interval graphs are equal, see e.g. Section 5.5 in~\cite{Die05}. Also, the connected unit distance graphs in $\mathbb R$ are paths so the same conclusion holds for them as well.}
\end{theorem}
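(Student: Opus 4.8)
The plan is to treat the two bounds by quite different techniques. For the upper bound (i) I would \emph{not} try to mimic Peeters' clique-cover of the lower half-neighbourhood: since two points at distance $<1$ are non-adjacent, every region of positive measure contains non-adjacent pairs, so no piece of the annulus is a clique and the cover-by-cones argument for discs breaks down. Instead I would separate the two constraints $\|u-v\|\ge 1$ and $\|u-v\|\le x$ and handle them on different scales. The key observation is that inside any region of diameter $\le x$ the upper constraint is automatic, so there $G$ coincides with the \emph{far graph} $F$ (adjacent iff distance $\ge 1$). I would first prove the clean lemma that $\chi(F)\le c_1^{\,d}\,\omega(F)$ for the far graph on \emph{any} finite point set: take a maximal $1$-separated subset $M$ (a clique of $F$, so $|M|\le \omega(F)$); by maximality every point lies within distance $<1$ of $M$, and each ball $B(m,1)$ can be covered by $c_1^{\,d}=\exp(O(d))$ pieces of diameter $<1$, each of which is an independent set of $F$. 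Colouring by (nearest $m$, piece) yields $\chi(F)\le \exp(O(d))\,\omega(F)$.

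The second ingredient is a decomposition of $\mathbb R^d$ into cells of diameter $\le x$, on which $F=G$, arranged so that cells combine with only an $\exp(O(d))$ loss. I would take a maximal $(x/2)$-separated set $P$ and its Voronoi partition: each cell has radius $\le x/2$, hence diameter $\le x$, and since $P$ is $(x/2)$-separated a packing bound shows that only $\exp(O(d))$ cells lie within distance $x$ of any given one. Properly colouring the ``within distance $x$'' conflict graph of the cells with $\exp(O(d))$ \emph{cell-colours}, any two cells of the same cell-colour are more than $x$ apart and so span no edge of $G$. Thus within one cell-colour class $G$ is the disjoint union of its cell restrictions, each a far graph, each admitting $\le \exp(O(d))\,\omega(G)$ colours by the lemma (using $\omega(F[\text{cell}])\le\omega(G)$, since a $1$-separated set inside a diameter-$\le x$ cell is a clique of $G$). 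Reusing one palette across cells of a given cell-colour and disjoint palettes across the $\exp(O(d))$ cell-colours gives $\chi(G)\le \exp(O(d))\,\omega(G)=M^d\omega(G)$; for unit disc graphs each cell is simply a clique and the same argument applies. I expect the main subtlety to be quantitative: a naive cube decomposition would lose a factor $d^{d/2}=\exp(O(d\log d))$ because a cube's diameter is $\sqrt d$ times its side, so using \emph{round} Voronoi cells of a separated net, rather than cubes, is exactly what keeps the loss at $\exp(O(d))$.

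For the lower bound (ii) I would place points on a sphere, using $\mathcal A_d(1,x)=\mathcal A_d(R,Rx)$ to rescale freely. On a sphere of radius $\rho$ two points at angular distance $\theta$ are at Euclidean distance $2\rho\sin(\theta/2)$, so the annulus condition becomes an angular band $\theta\in[\theta_1,\theta_2]$ with $\sin(\theta_2/2)=x\sin(\theta_1/2)$. Taking the $2^d$ points $\{\pm1\}^d/\sqrt d$ (scaled by $\rho$), the Euclidean distance is monotone in the Hamming distance $h$, so adjacency becomes $h\in[\beta_1 d,\beta_2 d]$ with $\beta_2/\beta_1=x^2$; the location $\beta_1$ is a free parameter of $\rho$. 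The plan is to position the band so that a Frankl--Wilson / linear-algebra bound controls \emph{both} the independence number $\alpha(G)$ and the clique number $\omega(G)$, since each is a code with all pairwise Hamming distances restricted to the band (respectively its complement). For a suitable prime modulus the Frankl--Wilson inequality forces $\alpha(G)\le \exp(\gamma d)$ with $\gamma<\ln 2$, while $\omega(G)\le\exp(\gamma' d)$, so $\chi(G)\ge |V(G)|/\alpha(G)\ge \exp((\ln 2-\gamma)d)$ gives $\chi(G)/\omega(G)\ge m^d$. The constant $1.2$ should appear as the smallest $x$ for which the band can be placed so that this modular bound applies with net exponent below $\ln 2$.

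The main obstacle for the lower bound is to control $\alpha$ and $\omega$ \emph{simultaneously} with one point set and a band compatible with $x\ge1.2$: bounding $\alpha$ is the standard Frankl--Wilson input, but bounding $\omega$ — showing a narrow-band code on the cube-sphere is exponentially small — is the delicate step, and it is here that the threshold $1.2$ is forced. Finally, for $d=1$ the upper bound is the $d=1$ case of (i), and for the lower bound I would exhibit an induced $C_5$ on the line: with five points at equal consecutive gaps $g$, the ``skip-one'' and ``skip-two'' pairs form the edges of a $5$-cycle provided $2g,3g\in[1,x]$ while $4g>x$ (e.g.\ $g=1/2$, $x\in[3/2,2)$), giving $\chi/\omega=3/2$; general $x>1$ is handled by adjusting the gaps.
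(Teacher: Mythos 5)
Your proof of the upper bound (i) is correct and takes a genuinely different route from the paper's. The paper runs a sweeping algorithm in the spirit of Peeters: a hyperplane moves upward, each newly met uncoloured vertex (a ``token'') colours all uncoloured vertices within $r_1/2$ of it; a packing argument (Lemma~\ref{lemcol}) shows that any ball $B(v,r_1)$ meets at most $7^d$ colours, and covering $B(t_k,r_1/2+r_2)$ by $(3+o(1))^d$ balls of radius $r_2/2$ plus pigeonhole extracts a clique of size $k/(\nu_{T,d}\,7^d)$, whence $\chi(G)\le (21+o_d(1))^d\,\omega(G)$. Your argument is static rather than algorithmic: Voronoi cells of a maximal $(x/2)$-separated set have diameter at most $x$, so inside a cell the annulus graph coincides with your ``far graph''; the far-graph lemma (a maximal $1$-separated set is a clique, and each $B(m,1)$ is covered by $\exp(O(d))$ pieces of diameter $<1$) is sound; the key inequality $\omega(F[\mathrm{cell}])\le\omega(G)$ holds precisely because the cell has diameter at most $x$; and palette reuse across cells at pairwise distance more than $x$ costs only the chromatic number of the cell-conflict graph, whose maximum degree is $\exp(O(d))$ by a packing bound. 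All covering and packing numbers you invoke are indeed $\exp(O(d))$ (e.g.\ by Theorem~\ref{cover}), and your remark that round cells are essential (cubes would lose a factor $d^{d/2}$) is correct. Both proofs give $M^d$; the paper's is closer to Peeters' original and recovers his bound for planar disc graphs, while yours cleanly decouples the two distance constraints and treats unit disc graphs as the degenerate case in which every cell is a clique.

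The lower bound (ii), however, has a genuine gap, and it sits exactly where you flag ``the delicate step'': you never bound $\omega(G)$, and this cannot be deferred. If you place the Hamming band $[\beta_1 d,\beta_2 d]$ so that it contains $d/2$ (where the Frankl--Wilson bound $\alpha\le 2^{(H(1/4)+o(1))d}\approx 2^{0.812d}$ applies, $H$ being the binary entropy), the elementary clique bounds fail quantitatively: a clique is a code with all distances in the band, the band containing $d/2$ forces $\beta_2\ge 1/2$, and Kleitman's isodiametric inequality then gives only $\omega\le 2^{(H(\beta_2/2)+o(1))d}\ge 2^{0.81d}$; since $0.812+0.81>1$, the ratio $2^d/(\alpha\,\omega)$ is not even exponentially large, and the sphere-packing (Hamming) bound $\omega\le 2^{(1-H(\beta_1/2)+o(1))d}\approx 2^{0.33d}$ still leaves $0.812+0.33>1$. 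One genuinely needs a linear-programming-type bound, and that is exactly what the paper supplies on the sphere instead of the cube: it takes the $(2/x,2)$-annulus graph on $\mathbb S^{d-1}$, bounds $\omega$ by the Kabatiansky--Levenshtein cap-packing bound (Theorem~\ref{thm proportion packing}), bounds $\alpha$ by the spherical isodiametric inequality (Corollary~\ref{thm sph cap}: an independent set has bounded spherical diameter, hence small measure), builds the finite graph via a Poisson $\varepsilon$-net (Lemma~\ref{lem net}), and the threshold $1.2$ comes from the numerical maximization in Lemma~\ref{lem analysis}. Incidentally, your plan can be completed, but not where you guess the constant $1.2$ enters: use the scaling freedom to place the whole band strictly above $d/2$; then the Plotkin bound gives $\omega=O(1)$, and Frankl--Wilson applied on the weight-$(2p-1)$ layer with forbidden intersection $p-1$ (forbidden distance $2p$ slightly above $d/2$) gives $\alpha\le\binom{d}{p-1}$, so $\chi/\omega\ge\binom{d}{2p-1}\big/\bigl(O(1)\binom{d}{p-1}\bigr)$ is exponential --- and this works for every $x>1$, not just $x\ge 1.2$. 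As written, though, with no bound on $\omega$, part (ii) is not proved.

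Finally, your $d=1$ construction does not cover the full range claimed in the theorem. Traversing an embedded $5$-cycle in $\mathbb R$ and returning to the start, the five signed steps, each of length in $[1,x]$, sum to zero; if $k$ of them are positive, then $\max(k,5-k)\le x\,\min(k,5-k)$, which forces $x\ge 3/2$. Hence for $1<x<3/2$ no $C_5$ with any choice of gaps lies in $\mathcal A_1(1,x)$, and ``adjusting the gaps'' cannot rescue the five-point configuration; the paper instead embeds a $(2k+1)$-cycle, where $k$ is the least integer with $kx\ge k+1$, which is what you need here as well.
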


\paragraph{Outline of the proofs.} In the proof of Theorem~\ref{thm 1} we consider a particular graph defined via a $(1,x)$-embedding in $\mathbb R^d$ and show that it does not admit a $(1,y)$-embedding. The proof is by contradiction and is divided into two cases: $d\ge 2$ and $d=1$. In the more substantial case $d\ge 2$, we first show that ``most pairs of vertices'' at distance at most 1 in the $(1,x)$-embedding must be at distance at most 1 in every $(1,y)$-embedding of the graph. Then, we show that ``most pairs of vertices'' at distance more than $x$ in the $(1,x)$-embedding must be at distance more than $y$ in every $(1,y)$-embedding of the graph. On the basis of these results we distinguish the cases $x < y$ and $x > y$ and reach a contradiction in each of them.

The proof of the upper bound of Theorem~\ref{thm 2} is based on the analysis of a geometric exploration algorithm. On the other hand, the proof of the lower bound is primarily based on an upper bound on maximal packings of the unit sphere with spherical caps.

\paragraph{Plan of the paper.} In Section~\ref{sec pf thm 1} we prove Theorem~\ref{thm 1}. In Section~\ref{sec pf thm 2} we prove Theorem~\ref{thm 2}. Section~\ref{sec discussion} is dedicated to a related discussion.

\section{Proof of Theorem~\ref{thm 1}}\label{sec pf thm 1}
For any real numbers $x\ge 1$ and $\varepsilon > 0$, denote by $G_{\infty,d} (x, \varepsilon)$ the graph that admits the following $(1,x)$-annulus embedding: its vertex set is embedded at the points $\{(\varepsilon n_i)_{i=1}^d: n_1, n_2, \dots, n_d \in \mathbb Z\}$ (a set which we denote $\varepsilon \mathbb Z^d$ in the sequel) and its edge set consists of the pairs $\{\{v_1,v_2\}: |v_1 - v_2|_d\in [1,x]\}$ (here, $|\cdot|_d$ denotes the Euclidean distance). We call this the \emph{natural} embedding of $G_{\infty,d}(x,\varepsilon)$. We will show that for all sufficiently large $x,y$ satisfying $x\neq y$ one may choose a small enough $\varepsilon > 0$ so that $G_{\infty,d}(x,\varepsilon)$ contains a finite subgraph outside $\mathcal A_d(1,y)$. Note that the case $d=1$ requires certain modifications, and is therefore treated in a simpler way in the end of the section. For now, we assume that $d\ge 2$. In the sequel, we simplify the notation $|\cdot|_d$ to $|\cdot|$, and denote by $\mathrm{vol}(\cdot)$ the volume function in $\mathbb R^d$.

For every $n\in \mathbb N$, denote by $G_{n,d}(x,\varepsilon)$ the graph defined by restricting the natural embedding of $G_{\infty,d} (x, \varepsilon)$ to the ball of center $(0,0)$ and radius $n$. Fix sufficiently large $x$ and $y$ and a sufficiently small $\varepsilon > 0$ and suppose for contradiction that for all $n\in \mathbb N$ one has $G_{n,d}(x,\varepsilon)\in \mathcal A_d(1,y)$. We start with a couple of preliminary results. The first of them says that, roughly speaking, most pairs of vertices of $G_{n,d} (x, \varepsilon)$ at distance at most 1 in the natural embedding are at distance at most 1 in any $(1,y)$-embedding of $G_{n,d} (x, \varepsilon)$.

\begin{lemma}\label{lem 1}
Fix a sufficiently large $C_0 = C_0(d) > 0$, $x, y\ge C_0$ satisfying $x\neq y$, an integer $n\ge x$ and any sufficiently small $\varepsilon > 0$. Then, for every pair of vertices $v_1,v_2$ of $G_{n,d} (x, \varepsilon)$ whose images $p_1,p_2$ in the natural embedding satisfy $|p_1-p_2| \le 1$ and $\max\{|p_1|,|p_2|\}\le n-x$, the images of $v_1$ and $v_2$ in every $(1,y)$-annulus embedding of $G_{n,d} (x, \varepsilon)$ in $\mathbb R^d$ are at distance at most 1.
\end{lemma}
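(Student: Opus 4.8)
The plan is to argue by contradiction, exploiting the rigidity of the integer-lattice structure in the natural embedding together with the packing bound already established for cliques. Let me sketch the geometric intuition. In the natural embedding, the set of lattice points at distance at most $1$ from a given point $p$ — i.e. the closed neighbourhood-in-distance of $p$ — forms a dense cluster of roughly $(1/\varepsilon)^d$ points all mutually at distance $\le 1$. More importantly, for two adjacent lattice points $p_1, p_2$ with $|p_1 - p_2| \le 1$, I would look at the common structure they induce: the points that are simultaneously ``far'' (at distance in $[1,x]$, hence adjacent) from both $p_1$ and $p_2$, versus points adjacent to exactly one of them. The key is that $p_1$ and $p_2$ have a huge common neighbourhood in the graph, and this combinatorial fact must be respected by any $(1,y)$-embedding.

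**The central step** is to show that if $v_1, v_2$ were at distance $> 1$ in some $(1,y)$-embedding $f$, then (since they are non-adjacent in the graph, as $|p_1-p_2|\le 1 < $ lower radius is false — wait, they ARE adjacent when $|p_1-p_2|\in[1,x]$, but here $|p_1-p_2|\le 1$ so generically they are adjacent only if the distance is exactly in $[1,x]$). Here is the real point: vertices at natural-distance $\le 1$ split into those at distance exactly in $[1,x]$ (adjacent) and those at distance $<1$ (non-adjacent). For a non-adjacent pair with $|p_1-p_2|<1$, in any $(1,y)$-embedding $f$ their images satisfy $|f(p_1)-f(p_2)| < 1$ or $> y$. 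I would rule out the ``$>y$'' alternative. To do so, I would consider a large set $S$ of lattice points forming a clique in $G_{n,d}(x,\varepsilon)$ together with both $v_1$ and $v_2$, or more robustly, consider many common neighbours and common non-neighbours of $v_1$ and $v_2$ and track where $f$ must place them. The hypothesis $\max\{|p_1|,|p_2|\}\le n-x$ guarantees that a full ball of radius $x$ around $p_1,p_2$ lies inside the truncation, so no neighbours are lost to the boundary.

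**Concretely**, I would build a witness gadget: a set $W$ of lattice points, all within the retained ball, such that $W\cup\{v_1\}$ and $W\cup\{v_2\}$ are cliques (each $w\in W$ at distance in $[1,x]$ from both $p_1$ and $p_2$), while $v_1,v_2$ are non-adjacent. Under $f$, the images of $W\cup\{v_1\}$ pack disjoint balls of radius $1/2$ inside a ball of radius $y$, and likewise for $W\cup\{v_2\}$; this forces $f(W)$ into a bounded region. If $|f(p_1)-f(p_2)|>y$ then $f(p_1)$ and $f(p_2)$ are far apart, yet both must be at distance $\le y$ from every point of the large set $f(W)$ — geometrically, $f(W)$ would have to lie in the intersection of two balls of radius $y$ whose centres are more than $y$ apart, which is a thin lens of bounded diameter, contradicting that $f(W)$ contains $|W|\gtrsim (x/\varepsilon)^d$ points pairwise at distance $\ge 1$ (a packing bound). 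Choosing $x$ large (via $C_0$) makes $|W|$ exceed the packing capacity of the lens, delivering the contradiction and forcing $|f(p_1)-f(p_2)|\le 1$.

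**The main obstacle** I anticipate is making the lens-packing estimate quantitative and uniform in the relevant parameters: I must show that the number of available common neighbours $|W|$ grows fast enough in $x$ to overwhelm the volume of the intersection lens of the two radius-$y$ balls, and this must hold simultaneously for the two regimes $x<y$ and $x>y$ that the surrounding proof later distinguishes. The delicate point is that the lens diameter depends on how far apart $f(p_1),f(p_2)$ are — if they are only slightly more than $y$ apart the lens is large, so I would instead argue that $f(p_1),f(p_2)$ cannot even be mildly separated by first establishing they share enough neighbours to pin them into a common small ball, then bootstrapping. The choice of a sufficiently large constant $C_0(d)$ is exactly what buys the margin needed to close this gap.
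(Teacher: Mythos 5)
Your reduction to ruling out the alternative $|f(p_1)-f(p_2)|>y$ for a non-adjacent pair is the right first move, and it matches the paper's strategy. But the counting argument you propose to rule it out has a genuine gap --- one you partially flag yourself, but whose proposed repair is circular. First, the lens $B(f(p_1),y)\cap B(f(p_2),y)$ is \emph{not} thin or of bounded diameter when $|f(p_1)-f(p_2)|$ exceeds $y$ only slightly: its diameter is then of order $y$ (approaching $\sqrt{3}\,y$), so it can accommodate on the order of $y^d$ points that are pairwise at distance at least $1$. Second, your clique $W$ cannot have $\gtrsim(x/\varepsilon)^d$ elements: its vertices are pairwise at distance at least $1$ and lie in a ball of radius $O(x)$, so necessarily $|W|=O(x^d)$ by the very packing bound you invoke elsewhere. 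Hence the comparison ``$|W|$ versus lens capacity'' reads $c_d x^d$ versus $C_d y^d$, and it fails outright in the regime $x<y$, which the lemma must cover: the constant $C_0=C_0(d)$ is fixed first and $x,y\ge C_0$ are arbitrary with $x\neq y$, so you cannot ``choose $x$ large'' relative to $y$. Your suggested fallback --- first establish that $v_1,v_2$ share enough neighbours to pin their images into a common small ball, then bootstrap --- is essentially a restatement of the conclusion being proved, not an argument.

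The paper escapes this trap by making the obstruction scale-invariant rather than volumetric. It never counts points of the lens at pairwise distance $\ge 1$; instead, Lemma~\ref{lem 1.2} manufactures $N_\gamma$ common neighbours of $v_1$ and $v_2$ whose images in the $(1,y)$-embedding are pairwise at distance \emph{more than $y$}. This is the hard step: one places $N_\gamma$ clusters of lattice points inside $B(p_1,\gamma x)\subseteq B(p_2,x)$ whose pairwise natural distances exceed $x$ (so vertices in distinct clusters are non-adjacent, forcing their images to distance $<1$ or $>y$), and a clique/pigeonhole argument via Lemma~\ref{lem 1.1} rules out too many images collapsing to distance $<1$, allowing a greedy choice of one vertex per cluster. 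Then Lemma~\ref{3points}, rescaled by the factor $y$, states that the intersection of two radius-$y$ balls with centers more than $y$ apart can never contain $N_\gamma$ points pairwise more than $y$ apart. Since $N_\gamma$ depends only on $d$ (and $\gamma$), this contradiction is immune to the relative sizes of $x$ and $y$. To salvage your approach you would need to replace your distance-$\ge 1$ packing count by a configuration of common neighbours at pairwise embedded distance $>y$; that scale-invariant configuration is precisely the missing idea.
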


We will need some preparation before presenting the proof of Lemma~\ref{lem 1}.

\begin{lemma}\label{lem 1.1}
Fix $C_1 > 0$, $x > 2 C_1+1$, any $\gamma\in ((C_1+1)/x,1)$ sufficiently close to $1$, an integer $n\ge x$ and any sufficiently small $\varepsilon > 0$. Also, fix a vertex $v$ of $G_{n,d}(x,\varepsilon)$ such that its image $p$ in the natural embedding satisfies $|p|\le n-x$, and fix a point $q$ on the boundary of $B(p, \gamma x)$. Denote by $S$ the set of points of $\varepsilon \mathbb Z^d$ in the intersection of $B(p,\gamma x)$ and $B(q,C_1)$. Then, for every $c_1 > 0$ there is $c_2 = c_2(d, \gamma, c_1) > 0$ such that every subset of $S$ of size $c_1 |S|$ contains a complete graph on $\lfloor c_2 C_1^d\rfloor$ vertices.
\end{lemma}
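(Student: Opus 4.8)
The plan is to reduce the statement to a packing estimate. First I would record two consequences of the geometric set-up. Since every point of $S$ lies in $B(q,C_1)$, any two points of $S$ are at distance at most $2C_1<x$ (using $x>2C_1+1$); consequently a subset of $S$ induces a \emph{complete} subgraph of $G_{n,d}(x,\varepsilon)$ precisely when its points are pairwise at distance at least $1$. Moreover, for $w\in S$ one has $|w|\le |p|+\gamma x\le (n-x)+\gamma x<n$ because $\gamma<1$, so every point of $S$ is a genuine vertex of $G_{n,d}(x,\varepsilon)$. Hence it suffices to show that any $T\subseteq S$ with $|T|=c_1|S|$ contains $\lfloor c_2C_1^d\rfloor$ points that are pairwise at distance at least $1$.

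The core of the argument is then a counting bound on such $T$. I would let $I\subseteq T$ be a maximal subset whose points are pairwise at distance at least $1$. By maximality, every point of $T$ lies within distance $1$ of some point of $I$, so $T\subseteq\bigcup_{i\in I}B(i,1)$ and therefore $|T|\le |I|\cdot N$, where $N$ denotes the maximal number of points of $\varepsilon\mathbb Z^d$ lying in a ball of radius $1$. A volume comparison (each such lattice point carries a disjoint axis-parallel cube of volume $\varepsilon^d$ contained in a slightly enlarged ball) yields $N\le (1+o(1))\,\mathrm{vol}(B(0,1))/\varepsilon^d$ as $\varepsilon\to 0$. Thus $|I|\ge |T|/N=c_1|S|/N$, and $I$ is a complete subgraph of the required kind.

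It remains to bound $|S|$ from below by a constant multiple of $C_1^d/\varepsilon^d$, and this is where the geometry of the lens $B(p,\gamma x)\cap B(q,C_1)$ enters; this is the step I expect to be the most delicate, mainly because the resulting constant must be uniform in $x$ and $n$. Placing $q$ at the origin and $p$ on the negative first coordinate axis, membership in $B(p,\gamma x)$ is equivalent to $w_1\le -|w|^2/(2\gamma x)$, so the lens equals $\{w:|w|\le C_1,\ w_1\le -|w|^2/(2\gamma x)\}$. Setting $\lambda:=C_1/(2\gamma x)$, the hypothesis $\gamma x>C_1+1$ forces $\lambda<\tfrac12$, and then the spherical cap $\{w:|w|\le C_1,\ w_1\le -C_1/2\}$ is contained in the lens (for such $w$, $|w|^2/(2\gamma x)\le \lambda C_1<C_1/2\le -w_1$). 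This cap has volume $c'(d)\,C_1^d$ for a constant $c'(d)>0$ depending only on $d$, uniformly in $x$ and $n$. Since the lens is ``fat'' (it contains a ball of radius $\Theta(C_1)$), a standard lattice-point count gives $|S|\ge \tfrac12 c'(d)\,C_1^d/\varepsilon^d$ for all sufficiently small $\varepsilon$. Combining the three estimates, $|I|\ge c_1|S|/N\ge c_2\,C_1^d$ for a suitable $c_2=c_2(d,\gamma,c_1)>0$, which proves the lemma.
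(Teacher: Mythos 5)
Your proposal is correct and follows essentially the same route as the paper: a lower bound $|S|=\Omega_d(C_1^d\varepsilon^{-d})$ obtained from a region of volume $\Theta_d(C_1^d)$ inside the lens (the paper uses a cone with apex $q$ and aperture $2\pi/3$ where you use the cap $w_1\le -C_1/2$), the observation that the lens has diameter at most $2C_1<x$ so that cliques in $S$ are exactly the $1$-separated subsets, and an extraction step giving a $1$-separated set of size at least $c_1|S|\varepsilon^d/O_d(1)$. The only differences are cosmetic: your maximal $1$-separated set plus covering count is what the paper's greedy selection produces, and you additionally check the (glossed-over) detail that points of $S$ are genuine vertices of $G_{n,d}(x,\varepsilon)$.
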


\begin{proof}
Denote $\mathcal R = B(p, \gamma x)\cap B(q,C_1)$. Note that $\mathcal R$ is disjoint from $B(p,1)$ as seen in Figure \ref{fig1}. Since $\gamma x > C_1 + 1$, $\mathcal R$ contains a cone $\mathcal C$ with center $q$, radius $C_1$ and aperture $2\pi/3$. Thus, in particular, for every sufficiently small $\varepsilon$ the number of points in $\mathcal R$ is at least $c C_1^d \varepsilon^{-d}$, where $c = c(d)$ is any constant smaller than the ratio of $\mathrm{vol}(\mathcal C)$ and $\mathrm{vol}(B(0,C_1))$. Note that $c$ is chosen independently of $C_1$.

Now, fix any $c_1\in (0,1]$ and let $\widehat S\subset S$ be a subset of $S$ of size at least $c_1 |S|$ points. Note that, since the diameter of $\mathcal R$ is bounded from above by $2C_1 \le x$, every point in $\widehat S$ must be connected to all points in $\widehat S$ but at most $(\varepsilon^{-1}+1)^d \mathrm{vol}(B(0,1))$, which is at most $(2\varepsilon^{-1})^d \mathrm{vol}(B(0,1))$ for all $\varepsilon\in (0,1/2]$. Thus, for all such $\varepsilon$ one may construct greedily a set of at least 
\begin{equation*}
    \dfrac{|\widehat S|}{(2\varepsilon^{-1})^d \mathrm{vol}(B(0,1))}\ge \dfrac{c_1 c C_1^d}{2^d \mathrm{vol}(B(0,1))}
\end{equation*}
vertices in $\widehat S$ which form a clique. This shows the lemma for $c_2 =  c_1 c/(2^d\, \mathrm{vol}(B(0,1))$.
\end{proof}

In the sequel, for every $\gamma \in (0,1)$ denote by $N_\gamma$ the maximal number of points in $B(0, \gamma)$ that are pairwise at distance more than 1.

\begin{figure}[ht] 
\centering
\includegraphics[width=\textwidth]{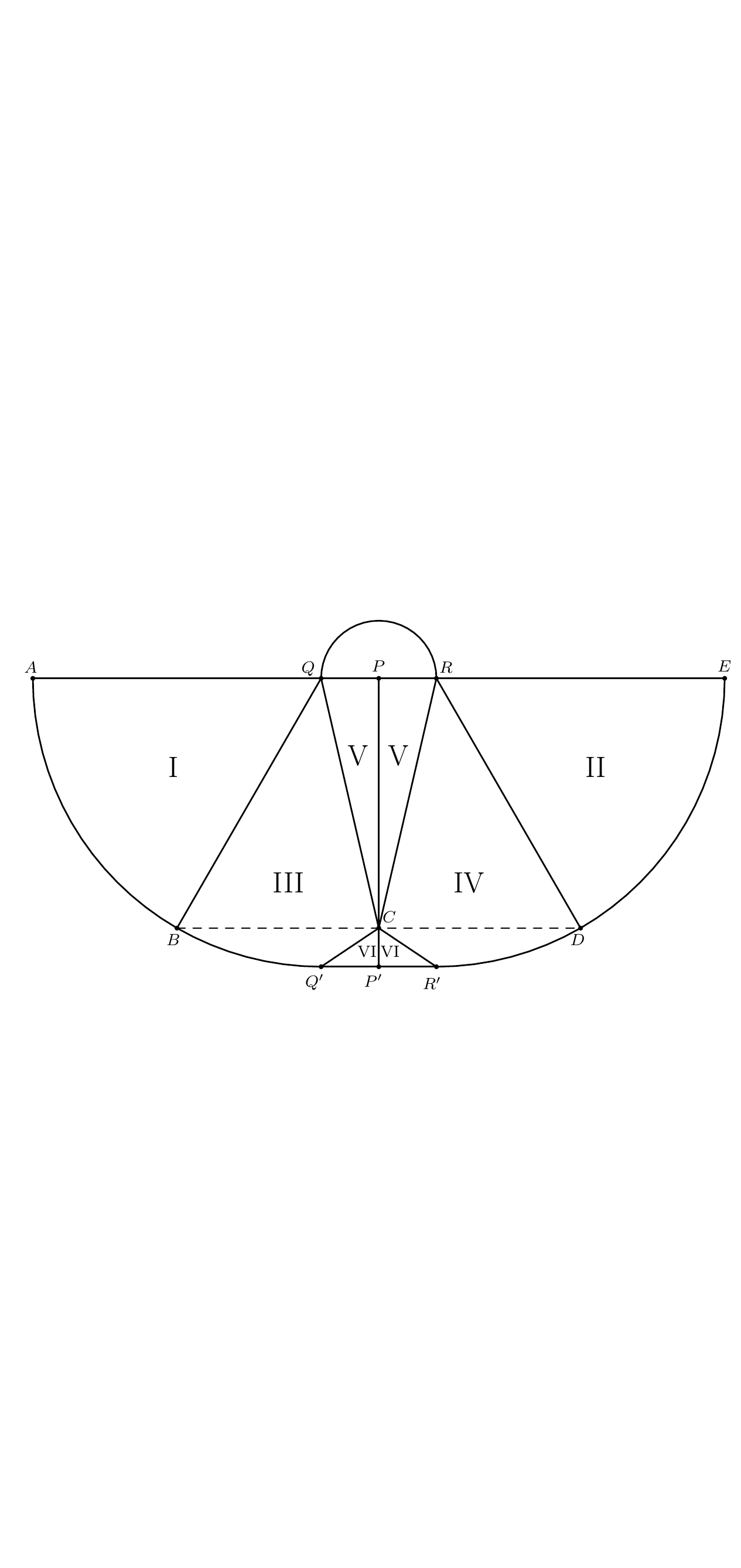}
\caption{The configuration from the proof of Lemma~\ref{lem 1.1}.}
\label{fig1}
\end{figure}

\begin{lemma}\label{lem 1.2}
There is a sufficiently large constant $C_2 > 0$ such that the following holds. Fix $x \ge C_2$, an integer $n\ge x$, $\gamma\in (0,1)$ sufficiently close to 1 and any sufficiently small $\varepsilon > 0$. Also, fix a vertex $v$ of $G_{n,d}(x,\varepsilon)$ such that its image $p_v$ in the natural embedding satisfies $|p_v|\le n-x$. Then, for every $(1,y)$-embedding of $G_{n,d}(x,\varepsilon)$ there exist $N_\gamma$ vertices in the neighbourhood of $v$ in the graph $G_{n,d}(x,\varepsilon)$ which are pairwise at distance more than $y$ in this embedding and are pairwise at distance more than $x$ in the natural embedding of $G_{n,d}(x,\varepsilon)$.
\end{lemma}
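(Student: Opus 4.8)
The plan is to realise the number $N_\gamma$ as a packing number near the boundary sphere $\partial B(p_v,\gamma x)$, to attach to each packing point a \emph{cluster} of nearby lattice neighbours of $v$ whose robustness is governed by Lemma~\ref{lem 1.1}, and finally to select one representative per cluster so that the representatives are pairwise far in the given $(1,y)$-embedding. First I would set up the packing. Fix a large constant $C_1 = C_1(d,\gamma)$, recalling that $C_2$ is taken much larger so that $x\ge C_2$ dwarfs $C_1$. Since $\gamma$ is sufficiently close to $1$, any optimal set of $N_\gamma$ points in $B(0,\gamma)$ that are pairwise at distance more than $1$ is forced near the boundary sphere, and $N_\gamma$ is locally constant in $\gamma$; hence after scaling by $x$ I can fix points $q_1,\dots,q_{N_\gamma}$ within distance $C_1$ of $\partial B(p_v,\gamma x)$ that are pairwise at natural distance more than $x+2C_1$. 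For each $k$ let $S_k$ be the set of points of $\varepsilon\mathbb Z^d$ lying in $B(p_v,\gamma x)\cap B(q_k,C_1)$. Because $q_k$ sits near the boundary and $x$ is large, every point of $S_k$ is at natural distance in $[1,\gamma x]\subseteq[1,x]$ from $p_v$, so $S_k$ consists of neighbours of $v$; and because the $q_k$ are separated by more than $x+2C_1$, any two points taken from distinct clusters are at natural distance more than $x$, so distinct clusters are pairwise non-adjacent.

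Next I would convert Lemma~\ref{lem 1.1} into a spreading statement. Applying that lemma with a fraction $c_1$ and choosing $C_1$ large enough that $\lfloor c_2 C_1^d\rfloor\ge 2$, every subset of $S_k$ of size at least $c_1|S_k|$ contains two adjacent vertices. Since adjacent vertices lie at distance at least $1$ in any $(1,y)$-embedding, this means that in any such embedding no ball of radius $1/2$ can contain the images of $c_1|S_k|$ points of $S_k$. Covering a ball of radius $1$ by a bounded number $N'=N'(d)$ of balls of radius $1/2$, it follows that the images of at most $N'c_1|S_k|$ points of $S_k$ can lie within distance $1$ of any fixed point; this is the sole role of Lemma~\ref{lem 1.1}, namely to forbid any $c_1$-fraction of a cluster from collapsing into a half-unit ball.

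Then I would choose the representatives. Fix a $(1,y)$-embedding. As distinct clusters are non-adjacent, for $i\ne j$ every pair $(s,t)\in S_i\times S_j$ has $(1,y)$-distance either less than $1$ or more than $y$. By the spreading bound, each fixed $s\in S_i$ is at $(1,y)$-distance less than $1$ from at most $N'c_1|S_j|$ points of $S_j$, hence at distance more than $y$ from all but at most $N'c_1|S_j|$ of them. Choosing $c_1=c_1(d)$ small enough that $N'c_1<1/N_\gamma$, I can select $u_1\in S_1,\dots,u_{N_\gamma}\in S_{N_\gamma}$ greedily: when $u_k$ is to be picked, the points of $S_k$ that are too close (in the $(1,y)$-embedding) to some previously chosen representative number fewer than $N_\gamma\cdot N'c_1|S_k|<|S_k|$, so a valid choice always remains. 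The resulting vertices are neighbours of $v$, pairwise at natural distance more than $x$ (being within $C_1$ of the $q_k$, which are separated by more than $x+2C_1$), and pairwise at $(1,y)$-distance more than $y$, exactly as required.

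The hard part will be the bookkeeping in the packing step: arranging simultaneously that one keeps exactly $N_\gamma$ cluster centres, that they sit near the boundary so that whole clusters consist of neighbours of $v$, and that they are separated by more than $x+2C_1$ so that clusters are non-adjacent and representatives stay at natural distance more than $x$. All three demands rely on $\gamma$ being close to $1$ (which pushes optimal packings toward the boundary and makes $N_\gamma$ locally constant) and on $x\ge C_2$ being large relative to $C_1$, so that the additive correction $2C_1$ is negligible at scale $x$. By contrast, the conceptual core is the short spreading-and-selection argument of the last two paragraphs, which I expect to be routine once the packing has been arranged.
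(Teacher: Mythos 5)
Your overall strategy is the same as the paper's: build clusters of lattice points around $N_\gamma$ well-separated centres inside $B(p_v,\gamma x)$, use Lemma~\ref{lem 1.1} to show that no constant fraction of a cluster can collapse into a small ball in the $(1,y)$-embedding, and finish with a greedy one-representative-per-cluster selection. Even the pigeonhole is the same up to packaging: the paper extracts an $(M+1)$-clique, which cannot fit in a unit ball covered by $M$ half-unit balls, while you extract an edge, which cannot fit in a half-unit ball, and then multiply by the covering number $N'$. That core (the ``spreading-and-selection'' part) is correct.

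The gap is in your packing step: the claim that, for $\gamma$ close to $1$, any optimal set of $N_\gamma$ points in $B(0,\gamma)$ at pairwise distance more than $1$ is ``forced near the boundary sphere'', so that after scaling you may take the centres $q_k$ within distance $C_1$ of $\partial B(p_v,\gamma x)$. The optimal configuration in $B(0,\gamma)$ is a fixed set depending only on $d$ and $\gamma$; after scaling by $x\ge C_2\gg C_1$, any of its points not lying \emph{exactly} on $\partial B(0,\gamma)$ ends up at distance proportional to $x$, hence much larger than $C_1$, from the boundary. So your claim amounts to asserting that every optimal configuration lies exactly on the boundary sphere, which you do not prove and which is not obvious (closeness of $\gamma$ to $1$ only forces every point to be at distance more than $1-\gamma$ from the centre; it says nothing about boundary proximity). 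Fortunately, the claim is not load-bearing: the only thing you use it for is that cluster points are at natural distance at least $1$ from $p_v$, i.e., that clusters consist of neighbours of $v$. That already follows from the separation you arrange via local constancy of $N_\gamma$: if the $q_k$ lie in $B(p_v,\gamma x)$ and are pairwise at distance more than $x+2C_1$, then for $k\neq l$ the triangle inequality gives $|q_k-p_v|\ge |q_k-q_l|-|q_l-p_v|> x+2C_1-\gamma x=(1-\gamma)x+2C_1>1+C_1$, so $B(q_k,C_1)$ is disjoint from $B(p_v,1)$. This is essentially how the paper proceeds: its centres are a max--min-distance configuration (not boundary points), and its clusters are observed to be disjoint from $B(p_v,1)$. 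With this repair your proof goes through. A last technical point, which you share with the paper: Lemma~\ref{lem 1.1} is stated for $q$ on $\partial B(p,\gamma x)$, yet it gets applied to centres that may lie in the interior; this is harmless because its proof only uses a lower bound on the volume of $B(p,\gamma x)\cap B(q,C_1)$, which only improves when $q$ moves inward.
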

\begin{proof}
First, notice that one may cover the ball with radius 1 with a finite number of $M = M(d)$ balls of radius $1/2$. Moreover, if the images of two vertices of $G_{n,d}(x,\varepsilon)$ in any $(1,y)$-embedding of this graph are in a ball of radius $1/2$, they are not connected by an edge. Hence, the vertices in no ball of radius 1 in any $(1,y)$-embedding of $G_{n,d}(x,\varepsilon)$ contain a clique of size $M+1$.

Next, consider a set of points $(q_i)_{i\in [N_\gamma]}$ in $B(p_v, \gamma x)$ for which 
$$\min_{i,j\in [N_\gamma], i\neq j} |q_i - q_j|$$
is maximised. By definition of $N_\gamma$ we know that this maximum is more than $x$, and even that by choosing $\gamma\in (0,1)$ sufficiently close to 1 one may ensure that the distance between any two points is at least $(2-\gamma)x$ (indeed, this way one may ensure that $2-\gamma$ is arbitrarily close to 1). 

Now, choosing $C_2$ so that $(1-\gamma)C_2 > 2\sqrt{C_2}$ means that any points in two different balls among $(B(q_i,\sqrt{C_2}))_{i=1}^{N_\gamma}$ are at distance more than $x$. Also, for all $i\in [N_\gamma]$, denote by $S_i$ the set of vertices with image in $\varepsilon \mathbb Z^2\cap (B(p_v,\gamma x)\cap B(q_i, \sqrt{C_2}))$ in the natural embedding. Note that each $S_i$ is disjoint from $B(p_v,1)$ as seen in Figure \ref{fig2}. Now, choosing $c_1 = N_\gamma^{-1}$ and $c_2 = c_2(d, \gamma, c_1)$ as in Lemma~\ref{lem 1.1}, choosing $C_2 > c_2^{-1} (M+1)$ and using Lemma~\ref{lem 1.1} for any $i\in [N_\gamma]$ and any subset $\widehat S_i\subset S_i$ satisfying $|\widehat S_i|\ge |S_i|/N_\gamma$ gives that there are $M+1$ vertices in $\widehat S_i$ that induce a complete graph. Fix any vertex $u$ with image $p_u\in \bigcup_{i=1}^{N_\gamma} (B(p_v, \gamma x)\setminus B(p_v,1))\cap B(q_i, \sqrt{C_2})$ in the natural embedding of $G_{n,d}(x,\varepsilon)$. By the preceding observation, for any $i\in [N_\gamma]$ and in any $(1,y)$-embedding of $G_{n,d}(x,\varepsilon)$, $B(p_u,1)$ contains less than $|S_i|/N_\gamma$ vertices of $S_i$.

\begin{figure}[ht] 
\centering
\hbox{\hspace{4em} \includegraphics[width=\textwidth]{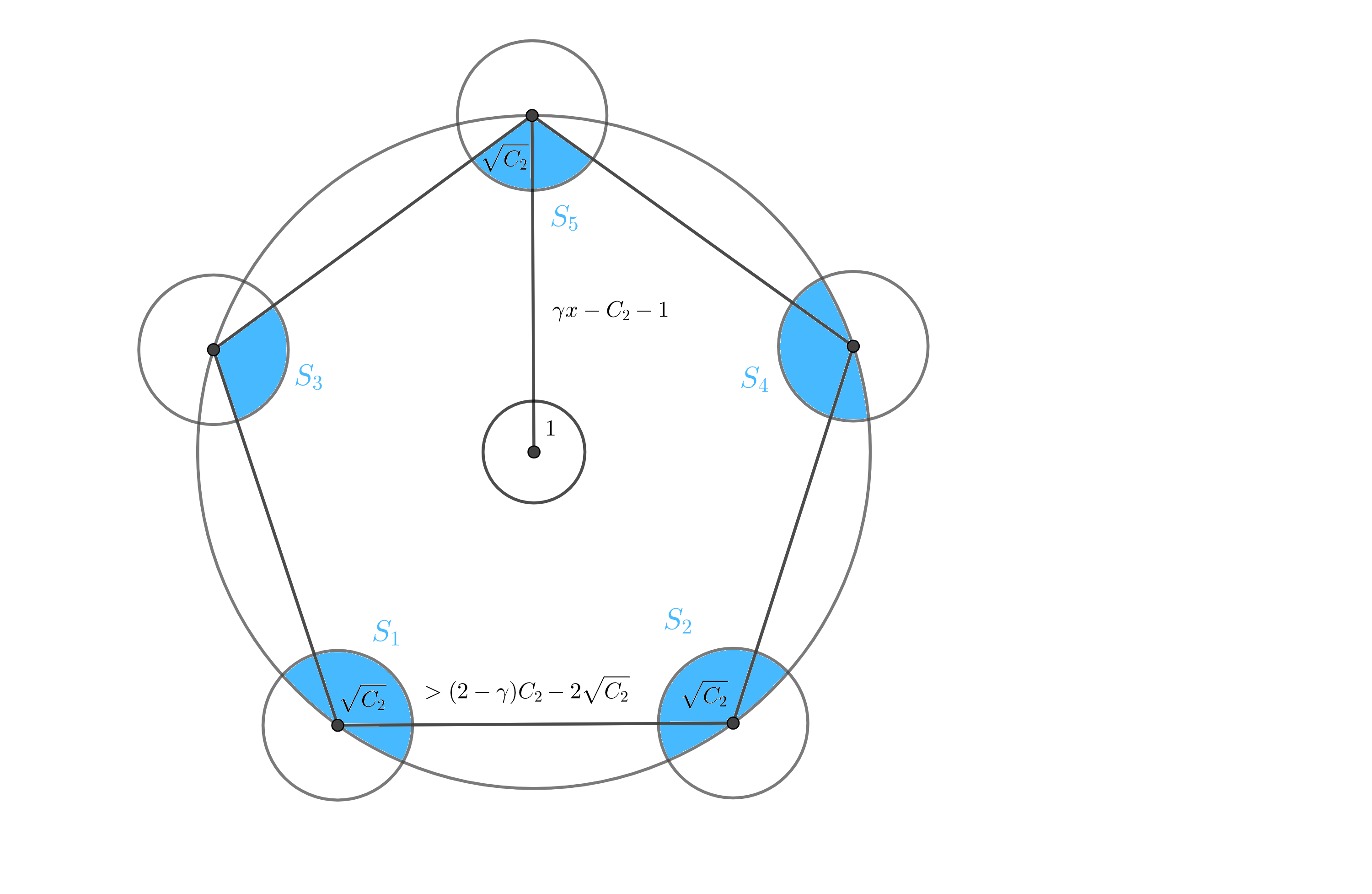}}
\caption{The configuration from the proof of Lemma~\ref{lem 1.2}.}
\label{fig2}
\end{figure}

Finally, fix any $(1,y)$-embedding of the neighbourhood of $v$. Now, for all $j\in [N_\gamma-1]$, choosing any vertices $v_1,\dots, v_j$ from $S_1,\dots, S_j$ ensures that there remain at least $(1-j/N_\gamma)|S_{j+1}|$ vertices in $S_{j+1}$, which are at distance at least $y$ from each of $v_1,\dots,v_j$. This allows us to choose greedily a set of $N_\gamma$ vertices, one from each of $(S_j)_{j=1}^{N_\gamma}$, whose images in the $(1,y)$-embedding are pairwise at distance more than $y$, which completes the proof.
\end{proof}

\begin{lemma}\label{3points}
Consider two points $a, b\in \mathbb R^d$ which are at distance more than $1$. Then, if $d\ge 3$ (respectively $d=2$), for every sufficiently large $\gamma\in (0,1)$ there is no set of $N_\gamma$ (respectively $3$) points in $B(a,1)\cap B(b,1)$ that are pairwise at distance larger than $1$.
\end{lemma}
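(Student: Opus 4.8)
The plan is to first replace the lens by a round ball, a reduction common to both cases. For any point $P\in B(a,1)\cap B(b,1)$, writing $m=(a+b)/2$ for the midpoint of $a$ and $b$, the identity $|P-a|^2+|P-b|^2=2|P-m|^2+|a-b|^2/2$ together with $|P-a|,|P-b|\le 1$ gives $|P-m|^2\le 1-|a-b|^2/4$. Hence $B(a,1)\cap B(b,1)\subseteq B(m,\rho)$ with $\rho:=\sqrt{1-|a-b|^2/4}$, and since $|a-b|>1$ we obtain the strict inequality $\rho<\sqrt3/2<1$ that drives everything below. Thus it suffices to bound the size of a set that is pairwise at distance $>1$ and lies inside a ball whose radius is bounded away from $1$.

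For $d=2$ I would exploit that the lens is thin. Let $\ell$ be the line through $a$ and $b$ and split the lens into its two closed halves determined by $\ell$. A direct computation shows that the diameter of each half is attained between a tip of the lens and the far endpoint of $\ell$ inside the lens, and equals $\sqrt{2-|a-b|}$; this is strictly less than $1$ precisely because $|a-b|>1$. Therefore each half contains at most one point of any set that is pairwise at distance $>1$, so such a set has at most two points and in particular cannot have three. This argument is genuinely two-dimensional: for $d\ge 3$ the two halves remain fat in the directions orthogonal to $\ell$, so their diameters are no longer below $1$, and a different argument is forced.

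For $d\ge3$ the plan is to combine the containment with the definition of $N_\gamma$. Since a pairwise-$(>1)$ set in the lens lies in $B(m,\rho)$, its size is at most $N_\rho$, and as $N_r$ is nondecreasing in $r$ and $\rho<\sqrt3/2$, it is at most $\sup_{r<\sqrt3/2}N_r$. It then suffices to choose $\gamma$ close enough to $1$ that $\sup_{r<\sqrt3/2}N_r<N_\gamma$, for then no pairwise-$(>1)$ set of size $N_\gamma$ can fit in the lens. Proving this strict gap is the main obstacle — the containment itself is routine. Its content is that unit-separated packing strictly improves as the confining radius grows from $\sqrt3/2$ to $1$: two points at distance $>1$ on a sphere of radius $r$ subtend an angle exceeding $2\arcsin(1/(2r))$ at the centre, and this threshold decreases strictly from about $70.5^\circ$ at $r=\sqrt3/2$ to $60^\circ$ as $r\to 1$, so that strictly more points become admissible once $d\ge3$ (for $d=3$, eight points fit pairwise-$(>1)$ in radius $\sqrt3/2$ but nine fit in any slightly larger radius, and for large $d$ the two maxima differ by an exponential factor). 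I would isolate this as a standalone packing estimate, valid uniformly in $d\ge3$; tellingly, the plane admits no such gap, which is exactly why the case $d=2$ needed the separate halving argument above.
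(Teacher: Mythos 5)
Your reduction of the lens to the ball $B(m,\rho)$ with $\rho<\sqrt{3}/2$ is correct, and your $d=2$ argument is fine --- it is essentially the paper's own proof (both split the lens by the line through $a$ and $b$ and bound the diameter of each half by $1$; your explicit value $\sqrt{2-|a-b|}$ checks out). The genuine gap is in the case $d\ge 3$: there your whole proof hangs on the ``standalone packing estimate'' $\sup_{r<\sqrt{3}/2}N_r<N_\gamma$ for $\gamma$ close to $1$, which you do not prove, and this is not a routine verification --- it is the entire content of the lemma in your approach, and it is hard. Your heuristic (the admissible angular threshold $2\arcsin(1/(2r))$ decreases strictly from $\arccos(1/3)\approx 70.53^\circ$ to $60^\circ$) cannot by itself yield the gap: the same strict decrease occurs for $d=2$, where, as you yourself note, the gap fails (a regular pentagon of side greater than $1$ has circumradius about $0.851<\sqrt{3}/2$, so $\sup_{r<\sqrt{3}/2}N_r=5=N_\gamma$ for $\gamma$ near $1$). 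So dimension-specific packing input is unavoidable. For $d=3$, the needed upper bound on $\sup_{r<\sqrt{3}/2}N_r$ does not follow from Tammes-type results, because $N_r$ counts points anywhere in the ball, not only on its boundary sphere. For large $d$, your ``exponential factor'' requires comparing a Shannon-type lower bound for codes at angle just above $60^\circ$ (about $(2/\sqrt{3})^d\approx 1.1547^d$) against a Kabatiansky--Levenshtein upper bound (Theorem~\ref{thm proportion packing}) at angle $70.53^\circ$ (about $1.147^d$) --- a margin of under one percent in the base, and one still needs a shell decomposition to pass from spheres to balls. For intermediate dimensions neither asymptotic applies and nothing usable is known. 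In short, you have reduced an elementary lemma to a statement at the edge of current knowledge on spherical codes.

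The paper proves the $d\ge3$ case with none of this, by a perturbation that exploits exactly the structure your containment throws away. Since the pairwise distances in $S$ are strictly greater than $1$, one may contract $S$ slightly so that $S\subseteq B(a,\gamma_1)\cap B(b,\gamma_1)$ for some $\gamma_1<1$; set $\gamma=(1+\gamma_1)/2$ and let $z$ be the point on the ray from $b$ through $a$ at distance $\gamma$ beyond $a$. Then $|z-s|\ge |a-b|+\gamma-\gamma_1>1$ for every $s\in S$, so $S\cup\{z\}\subseteq B(a,\gamma)$ is pairwise at distance more than $1$, whence $N_\gamma\ge|S|+1$, i.e.\ $|S|<N_\gamma$; monotonicity of $\gamma\mapsto N_\gamma$ then covers all larger $\gamma$. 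The point is that the lens lies inside $B(a,\gamma)$ but hugs $a$, leaving room near the far pole of $B(a,\gamma)$ for one additional point; your circumscribed ball $B(m,\rho)$ has no such slack, which is precisely why you are forced into hard packing comparisons. To repair the argument, replace the unproven packing-gap claim by this augmentation step.
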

\begin{proof}
Let $S$ be a subset of points in $B(a,1)\cap B(b,1)$ such that all pairs of points in $S$ are at distance more than 1. Suppose first that $d\ge 3$ (see Figure~\ref{fig63}). Then, there is $\gamma_1 \in (0,1)$ sufficiently close to 1 such that, up to slight modifications of the positions of the points in $S$ if necessary, one may assume that $S\subseteq B(a,\gamma_1)\cap B(b,\gamma_1)$. Set $\gamma = (1+\gamma_1)/2$. Then, on the one hand, $B(a, \gamma_1)\subseteq B(a,\gamma)$, and moreover the intersection point of the ray $ba^{\to}$ and the sphere $\partial B(a,\gamma)$, which is further from $b$, is at distance at least $\gamma+(1-\gamma_1) > 1$ from $S$. Hence, $B(a,\gamma)$ contains a point at distance more than $1$ from every point in $S$, and by definition of $N_\gamma$ we conclude that $N_\gamma > |S|$, which concludes the proof when $d\ge 3$.

\begin{figure}[ht]
\centering
\hbox{\hspace{4em} \includegraphics[scale=0.43]{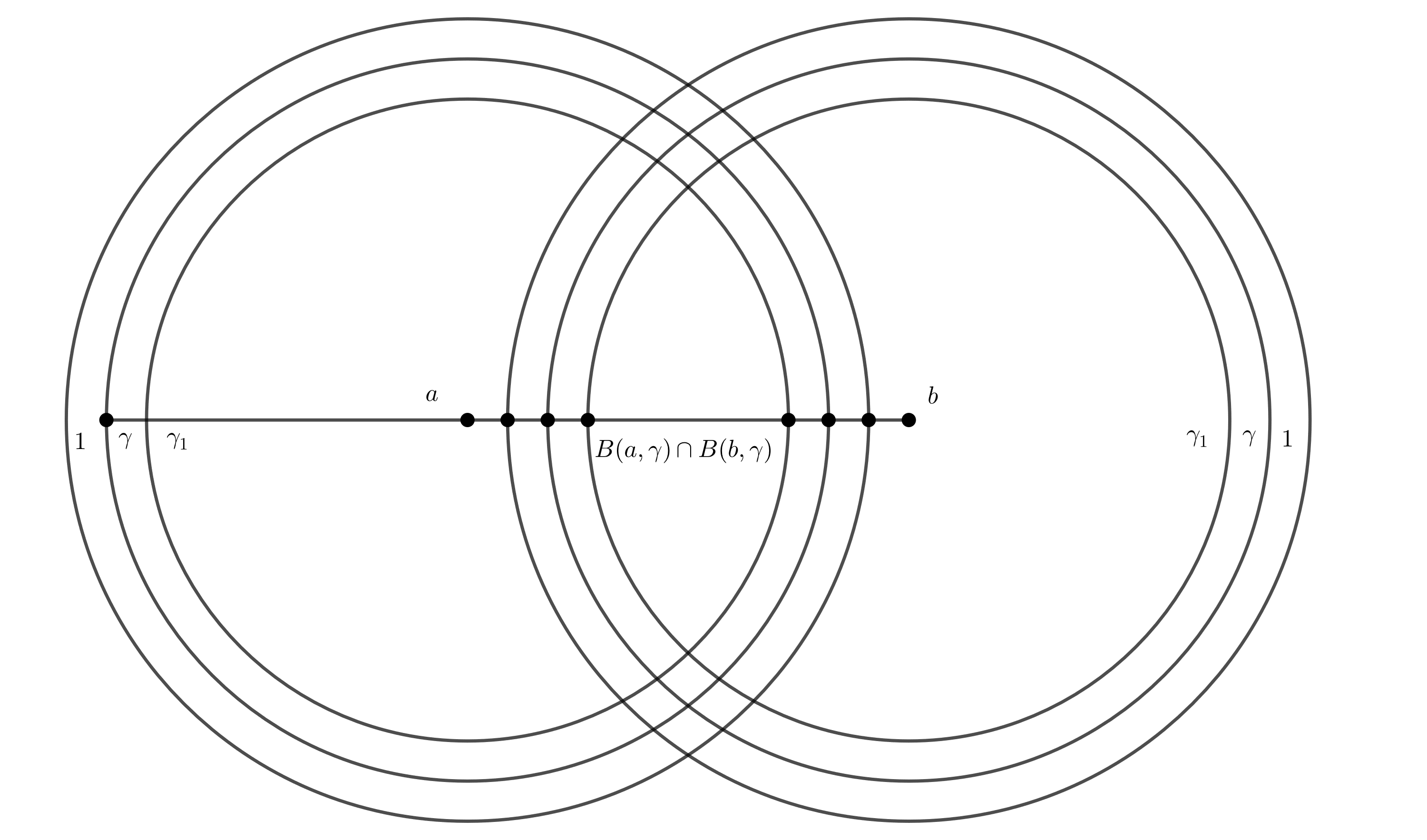}}
\caption{The configuration from the proof of Lemma~\ref{3points} for $d\ge 3$ (depicted in two dimensions for simplicity); $\gamma_1$, $\gamma$ and $1$ indicate the radii of the spheres in the figure.}
\label{fig63}
\end{figure}

Now, suppose that $d=2$ (see Figure~\ref{fig64}). Let the circles with centers $a$ and $b$ intersect at points $c$ and $c'$, and let the segment $ab$ intersect these circles at points $e$ and $f$. Then, for each point $p$ in the curved triangle $cef$ (where $ef$ is a segment and $ce$ and $cf$ are arcs of the unit circles), the ball $B(p, 1)$ covers the curved triangle $cef$. Hence, there can be at most one point in $S$ in the curved triangle $cef$. Similarly, there can be at most one point in $S$ in the curved triangle $c'ef$. Since $B(a,1)\cap B(b,1)$ is covered by these two triangles, there can be at most two points at a distance larger than $1$ there, which completes the proof. 

\begin{figure}[ht]
\centering
\hbox{\hspace{4em} \includegraphics[width=\textwidth]{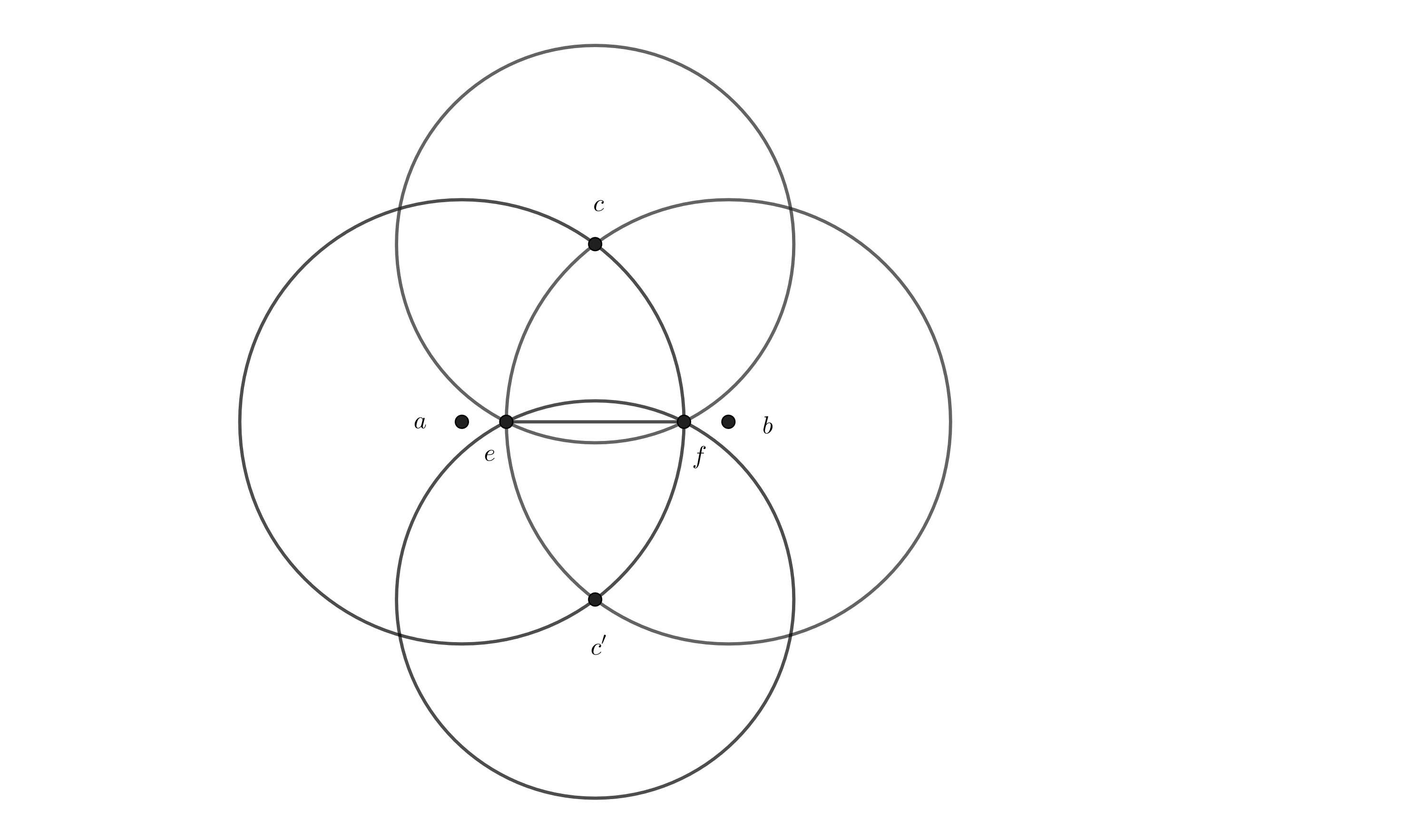}}
\caption{The configuration from the proof of Lemma~\ref{3points} for $d = 2$.}
\label{fig64}
\end{figure}
\end{proof}

We are ready to come back to the proof of Lemma~\ref{lem 1}.

\begin{proof}[Proof of Lemma~\ref{lem 1}]
Fix $\gamma\in (0,1)$ as in Lemma~\ref{3points} and $C_0 > (1-\gamma)^{-1}$. This ensures that $B(p_2, x)$ contains $B(p_1, \gamma x)$ (indeed, every point at distance at most $\gamma x$ to $p_1$ is at distance no more than $\gamma x + 1 < x$ from $p_2$). Thus, up to choosing $\varepsilon$ sufficiently small and $C$ sufficiently large, by Lemma~\ref{lem 1.2} in any $(1,y)$-embedding of $G_{n,d}(x,\varepsilon)$ there are $N_\gamma$ common neighbours to $v_1$ and $v_2$ that are embedded into points, which are pairwise at distance more than $y$. 

Now, assume for contradiction that in some $(1,y)$-embedding of $G_{n,d}(x,\varepsilon)$ the vertices $v_1$ and $v_2$ are embedded at distance more than $y$. Then, the intersection of the balls with radius $y$, centered at the images of $v_1$ and $v_2$ in the $(1,y)$-embedding (which we denote by $q_1$ and $q_2$), cannot contain $N_\gamma$ points at pairwise distances more than $y$ by Lemma \ref{3points}, which finishes the proof.
\end{proof}

We showed that, roughly speaking, ``most pairs of vertices that are close'' in the natural embedding of $G_n(x, \varepsilon)$ are still ``close'' in any $(1,y)$-embedding of $G_{n,d}(x, \varepsilon)$. A second step is to show that ``most'' pairs of vertices that are far in the natural embedding of $G_{n,d}(x, \varepsilon)$ are far in any $(1,y)$-embedding of $G_{n.d}(x, \varepsilon)$.

\begin{lemma}\label{cor of lem 1}
Fix a sufficiently large $C_3 > C_0$ (with $C_0$ defined in Lemma~\ref{lem 1}), $x, y\ge C_3$ satisfying $x\neq y$, an integer $n\ge x+1$ and any sufficiently small $\varepsilon > 0$. Then, for every pair of vertices $v_1,v_2$ of $G_{n,d} (x, \varepsilon)$ whose images $p_1,p_2$ in the natural embedding satisfy $|p_1-p_2| > x$ and $\max\{|p_1|,|p_2|\}\le n-x-1$, the images of $v_1$ and $v_2$ in any $(1,y)$-annulus embedding of $G_{n,d} (x, \varepsilon)$ in $\mathbb R^d$ are at distance more than $y$.
\end{lemma}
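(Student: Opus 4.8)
The plan is to argue by contradiction. Since $|p_1-p_2|>x$, the vertices $v_1,v_2$ are non-adjacent in $G_{n,d}(x,\varepsilon)$, so in any $(1,y)$-embedding their images $q_1,q_2$ satisfy either $|q_1-q_2|<1$ or $|q_1-q_2|>y$; it therefore suffices to rule out the first possibility. So I would fix a $(1,y)$-embedding, assume $|q_1-q_2|\le 1$, and aim for a contradiction. The basic object to exploit is the set of \emph{private} neighbours of $v_1$, that is, vertices $w$ with $|p_w-p_1|\in[1,x]$ but $|p_w-p_2|>x$ (plentiful precisely because $v_1$ and $v_2$ are far apart). Such a $w$ is adjacent to $v_1$ but not to $v_2$, so $|q_w-q_1|\in[1,y]$ while $|q_w-q_2|\notin[1,y]$; combining these with $|q_1-q_2|\le 1$ pins $|q_w-q_1|$ into $[1,2)\cup(y-1,y]$. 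In words, each private neighbour is sent either into the small ball $B(q_1,2)$ or into the thin outer shell $\{z:y-1<|z-q_1|\le y\}$.

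Next I would turn these private neighbours into a large clique. Taking the ball $B(z_0,x/4)$ with $z_0=p_1-\tfrac{x}{2}\cdot\frac{p_2-p_1}{|p_2-p_1|}$ (displacing $p_1$ by $x/2$ in the direction away from $p_2$), one checks that it lies inside the annulus $\{1\le|z-p_1|\le x\}$ and entirely outside $B(p_2,x)$, so a maximal $1$-separated set of lattice points inside it is a clique $K$ of private neighbours of $v_1$ with $|K|=\Theta(x^d)$. In the $(1,y)$-embedding the image of $K$ is again a clique, each vertex of which lies in $B(q_1,2)$ or in the outer shell by the previous paragraph. The number of vertices of $K$ whose images lie in $B(q_1,2)$ is at most a constant $M'=M'(d)$: covering $B(q_1,2)$ by $M'$ balls of radius $1/2$, any two images in a common small ball are at distance $<1$ and hence non-adjacent, so a clique can use each small ball at most once — this is exactly the counting that opens the proof of Lemma~\ref{lem 1.2}.

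The crux is to bound the number of vertices of $K$ sent into the outer shell, and here I expect the main difficulty to lie. When $y$ is comparable to $x$ I would argue by packing: a clique inside the shell has all pairwise distances at most $y$, so on a sphere of radius about $y$ the points subtend pairwise angles at most $60^\circ$ and hence lie in a single spherical cap; a volume/packing estimate then bounds their number by $O(y^{d-1})$. Since $|K|=\Theta(x^d)$ while $M'+O(y^{d-1})=O(x^{d-1})$ when $y=O(x)$, this already contradicts the two bounds above. When $y$ is much larger than $x$ the shell is simply out of reach: iterating Lemma~\ref{lem 1} along a path of unit lattice steps from $p_1$ to any vertex $u$ at natural distance at most $x$ yields $|q_u-q_1|\le x+1$, so no neighbour of $v_1$ can sit at distance exceeding $x+1<y-1$ from $q_1$; the outer shell then contains no vertex of $K$, and all of $K$ maps into $B(q_1,2)$. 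Either way the image of $K$ is confined to a region admitting at most $M'+O(y^{d-1})$ clique vertices, contradicting $|K|=\Theta(x^d)$ once $C_3$ is large enough.

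The delicate step is the rigidity estimate $|q_u-q_1|\le x+1$ used in the large-$y$ regime. Chaining Lemma~\ref{lem 1} is only legitimate while every intermediate lattice point stays inside $B(0,n-x)$, which is exactly where the hypotheses $n\ge x+1$ and $\max\{|p_1|,|p_2|\}\le n-x-1$ must be used, and some care is needed to ensure that the relevant chains (or the location of $K$) remain interior; this bookkeeping, together with making the spherical-cap packing estimate quantitative, is the part I would expect to consume most of the work. Finally, I would note that the whole set-up is symmetric in $v_1$ and $v_2$, which can be invoked to place the clique on whichever side keeps the chains interior.
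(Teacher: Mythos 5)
Your overall route is genuinely different from the paper's (the paper never builds a large clique of ``private'' neighbours; see below), and your first two steps --- reducing to the case $|q_1-q_2|\le 1$ and the dichotomy placing each private neighbour in $B(q_1,2)$ or in the shell $\{z:\,y-1<|z-q_1|\le y\}$ --- are correct, as is Case 1 once its range is made explicit (it covers all $y\le x+2$ and uses only adjacencies plus packing in the target embedding, so no interior hypotheses are needed there). The gap is in Case 2, and it is not deferrable bookkeeping: the step fails as described. Your rigidity estimate $|q_u-q_1|\le x+1$ is an iteration of Lemma~\ref{lem 1} (it is Corollary~\ref{cor1} of the paper), and \emph{every} application of Lemma~\ref{lem 1} requires both endpoints of the unit step to have norm at most $n-x$. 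Your clique $K\subset B(z_0,x/4)$ contains points at distance up to $3x/4$ from $p_1$, hence of norm up to $|p_1|+3x/4$, while the hypotheses allow $|p_1|=n-x-1$. Concretely, take $n=\lceil 3x/2\rceil+1$ and $p_1\approx -p_2$ with $|p_1|=|p_2|=n-x-1\approx x/2$ and $|p_1-p_2|$ barely larger than $x$; this is admissible. In this configuration the set of points lying simultaneously in $B(0,n-x)$, at distance in $[1,x]$ from $p_1$, and at distance more than $x$ from $p_2$ is a thin lens of height $O(1)$ around the point $(n-x)p_1/|p_1|$, all of whose points are within $O(\sqrt{x})$ of $p_1$. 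Hence every private neighbour at distance at least $x/4$ from $p_1$ --- in particular every vertex of $K$, for either choice of displacement direction --- has norm exceeding $n-x$, so no chain of unit steps from $p_1$ to it stays in the region where Lemma~\ref{lem 1} applies, and the bound $|q_u-q_1|\le x+1$ cannot be derived. Your symmetry remark does not rescue this: the configuration above is symmetric in $v_1$ and $v_2$, so the chains fail on both sides.

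The defect is repairable, but it needs a different construction rather than care: in Case 2 one can replace $K$ by a clique that hugs the sphere of radius $|p_1|$, e.g.\ lattice points near $p_1+w$ with $w$ orthogonal to $p_1$, $w\cdot(p_1-p_2)\ge 0$ and $1\le |w|\le \sqrt{x}/2$. Such points are still private neighbours, have norm at most $n-x-\tfrac12$ (note $n-x-1\ge x/2$ whenever the hypotheses are non-vacuous), so chains along the segment to $p_1$ stay interior, and they form a clique of size $\Omega_d\bigl(x^{(d-1)/2}\bigr)$, which still beats the constant bound for $B(q_1,2)$; chaining now gives $|q_u-q_1|\le \sqrt{x}/2+2<y-1$ for every $y>x+2$, so the case split survives. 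For comparison, the paper avoids chains altogether: it places its auxiliary points inside $B(p_1,1)$ and $B(p_2,1)$ (automatically inside the region where Lemma~\ref{lem 1} applies), splits a $1$-separated configuration between two opposite half-balls so that cross pairs are at natural distance more than $x$ (non-edges) while pairs within a half-ball are edges, applies Lemma~\ref{lem 1} once per point, and then rules out the resulting pattern --- two groups with image distances at least $1$ within groups and at most $1$ across --- by the sphericity theorem (Lemma~\ref{lem cyclicity}) for $d\ge 3$ and by Lemma~\ref{3points} for $d=2$.
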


Before proceeding with the proof, we need to prepare the ground with two technical results.

\begin{lemma}\label{easy lemma}
For every $\gamma\in [0.99,1)$, we have $N_\gamma\ge 5$ if $d=2$ and $N_\gamma\ge 2d+2$ if $d\ge 3$.
\end{lemma}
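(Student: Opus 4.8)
The plan is to prove both lower bounds by exhibiting explicit configurations of points in $B(0,\gamma)$ that are pairwise at distance more than $1$; since $N_\gamma$ is by definition the maximum size of such a configuration, producing one of the claimed cardinality suffices. Throughout I would use only $\gamma\ge 0.99$ (the upper restriction $\gamma<1$ plays no role in the lower bound).

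For $d=2$, I would take the five vertices of a regular pentagon inscribed in the circle $\partial B(0,\gamma)$. The minimum pairwise distance in such a pentagon is its side length $2\gamma\sin(\pi/5)$, and since $2\sin(\pi/5)>1.17$ and $\gamma\ge 0.99$ we get that every side exceeds $1$; as the pentagon's diagonals are strictly longer than its sides, all $\binom{5}{2}$ pairwise distances exceed $1$, giving $N_\gamma\ge 5$.

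For $d\ge 3$, I would take a regular simplex $v_0,\dots,v_d$ with circumradius $\gamma$ and centroid at the origin, together with its reflection $-v_0,\dots,-v_d$ through the origin, for a total of $2d+2$ points. Using $|v_i|=\gamma$ and $\sum_i v_i=0$ one obtains $v_i\cdot v_j=-\gamma^2/d$ for $i\neq j$, from which all relevant distances follow: within each simplex every edge has length $\gamma\sqrt{2+2/d}$, the antipodal pairs $\{v_i,-v_i\}$ are at distance $2\gamma$, and the mixed pairs $\{v_i,-v_j\}$ with $i\neq j$ are at distance $\gamma\sqrt{2-2/d}$. The smallest of these three values is $\gamma\sqrt{2-2/d}$, which for $d\ge 3$ is at least $\gamma\sqrt{4/3}\ge 0.99\sqrt{4/3}>1$, so all $2d+2$ points are pairwise at distance more than $1$. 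Finally I would check that the two simplices share no vertex, i.e. $-v_j\neq v_i$ for all $i,j$: this holds because $-v_j=v_i$ would force $v_i\cdot v_j=-\gamma^2$, whereas $v_i\cdot v_j\in\{\gamma^2,-\gamma^2/d\}$. Hence the $2d+2$ points are distinct and $N_\gamma\ge 2d+2$.

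The only point requiring genuine care is the distance bookkeeping for the two-simplex construction and, in particular, identifying the mixed distance $\gamma\sqrt{2-2/d}$ as the binding constraint. This is precisely where the dimensional threshold enters: for $d=2$ the same construction would give mixed distance $\gamma\sqrt{2-2/2}=\gamma<1$ and therefore fails, which is exactly why the plane requires the separate pentagon argument and yields only the weaker bound $5=2d+1$.
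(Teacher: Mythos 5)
Your proof is correct. For $d=2$ it coincides with the paper's argument: both inscribe a regular pentagon in a circle of radius at least $0.99$ and observe that its side length $2\gamma\sin(\pi/5)>1$. For $d\ge 3$, however, your construction is genuinely different. The paper hand-crafts a coordinate configuration: eight points of the form $(\pm 0.6,\pm y,0,\dots,0)$ and $(\pm 0.6,0,\pm y,0,\dots,0)$ with $y=\sqrt{0.99^2-0.6^2}$, together with the $2(d-3)$ points $\pm e_i$ for $i\in\{4,\dots,d\}$, and checks the pairwise distances case by case. Your ``double simplex'' (a regular $d$-simplex of circumradius $\gamma$ centred at the origin together with its antipodal image) reaches the same count $2d+2$ with a cleaner, dimension-uniform computation: the Gram identity $v_i\cdot v_j=-\gamma^2/d$ gives all three distance types in closed form, and the binding constraint $\gamma\sqrt{2-2/d}\ge 0.99\sqrt{4/3}>1$ for $d\ge 3$ is transparent, as is your explanation of why the same construction collapses in the plane (mixed distance $\gamma\sqrt{2-2/2}=\gamma<1$), forcing the separate pentagon argument. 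A side benefit of your version: all your points have norm exactly $\gamma$, hence lie in the (closed) ball $B(0,\gamma)$, whereas the paper's points $z_{i,j}=\pm e_i$ have norm $1$ and so, strictly speaking, fall outside $B(0,\gamma)$ when $\gamma<1$ --- a small oversight in the paper that is repaired by rescaling those points by $\gamma$, since all the relevant inequalities are strict. The only pedantic caveat for your write-up is the open-versus-closed ball issue: if $B(0,\gamma)$ is meant to be open, shrink your configuration by a factor $1-\epsilon$ for small $\epsilon>0$; the strict inequalities leave room for this, and the paper's own proof takes the same liberty.
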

\begin{proof}
If $d = 2$, inscribing a regular pentagon in the sphere $\partial B(0, 0.99)$ is sufficient since the side of this pentagon is $0.99\cdot 2 \sin(\pi/5) > 1$. Suppose that $d\ge 3$. Then, consider the set $S$ of the points 
\begin{align*}
& a_1 = (x,y,0,\ldots,0),\; a_2 = (x,-y,0,\ldots,0),\;a_3 = (-x,y,0,\ldots,0),\; a_4 = (-x,-y,0,\ldots,0),\\
& b_1 = (x,0,y,\ldots,0),\; b_2 = (x,0,-y,\ldots,0),\;b_3 = (-x,0,y,\ldots,0),\; b_4 = (-x,0,-y,\ldots,0),\\
& \forall i\in [4, d], z_{i,1} = -z_{i,2} = (0, \ldots, 0, 1, 0, \ldots, 0),
\end{align*}

\begin{figure}[ht] 
\centering
\hbox{\hspace{2em} \includegraphics[width=45em]{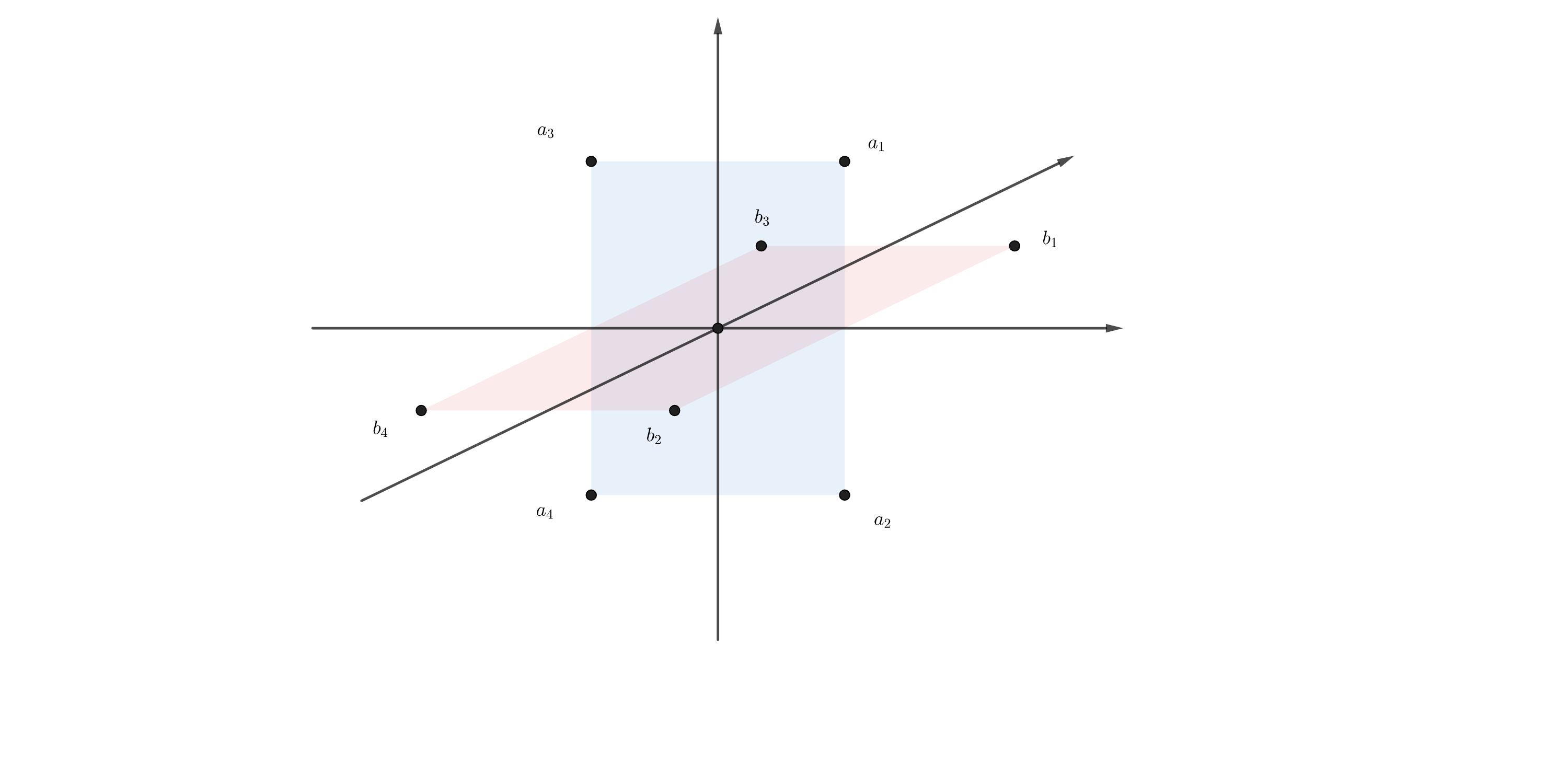}}
\caption{The configuration from the proof of Lemma~\ref{easy lemma}.}
\label{fig5}
\end{figure}
\noindent 
where the unique 1 in $z_{i,1}$ is in the $i$-th position.
We choose positive $x$ and $y$ so that $x^2+y^2 = 0.99^2$ and $x = 0.6$. Then, $y = \sqrt{0.99^2 - 0.6^2} \approx 0.79 > x$ and so
$$\min_{i,j\in [4], i\neq j} |a_ia_j| = 2|x| = 1.2 \text{ and } \min_{i,j\in [4]} |a_ib_j| = \sqrt{2y^2} \approx 1.24 > 1.$$
Moreover, since for all $i\in [4,d]$ and $j\in \{1,2\}$, the minimum to the distances from $z_{i,j}$ to any other point in $S$ is more than 1, we conclude that $N_{\gamma}\ge |S| = 2d+2$, which proves the lemma.
\end{proof}

The second technical result we need appears as Theorem~2 in~\cite{M91} (which is formulated in terms of a graph parameter called \emph{sphericity} of the complete bipartite graph $K_{d,d}$); see also Section~5 in~\cite{RRS89} for the related notion of spherical dimension.

\begin{lemma}[see Theorem~2 in~\cite{M91}]\label{lem cyclicity}
For every $d\ge 1$, there is no set of\, $2d+2$ points $\{p_1, \ldots, p_{2d+2}\}$ in $\mathbb R^d$ such that:
\begin{itemize}
    \item $|p_i-p_j| > 1$ for all $i,j$ such that either both $i,j\ge d+2$ or both $i,j\le d+1$;
    \item $|p_i-p_j| \le 1$ for all $i,j$ such that $1 \le i \le d+1$ and $d+2 < j \le 2d+2$.\qed
\end{itemize}
\end{lemma}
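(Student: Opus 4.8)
My plan is to recast the statement as the impossibility of placing two ``far simplices'' that are mutually close (this is exactly the sphericity bound for $K_{d+1,d+1}$ alluded to before the lemma). Writing $A=\{p_1,\dots,p_{d+1}\}$ and $B=\{p_{d+2},\dots,p_{2d+2}\}$ and denoting their points by $a_i$ and $b_j$, the hypotheses say that each of $A,B$ consists of $d+1$ points pairwise at distance greater than $1$, while every point of $A$ lies within distance $1$ of every point of $B$. A first useful reduction: since Euclidean balls are convex and each $b_j$ lies in $\bigcap_i B(a_i,1)$, the whole convex hull $\operatorname{conv}(B)$ lies in $\bigcap_i B(a_i,1)$, and symmetrically $\operatorname{conv}(A)\subseteq\bigcap_j B(b_j,1)$. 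Hence \emph{every} point of $\operatorname{conv}(A)$ is within distance $1$ of \emph{every} point of $\operatorname{conv}(B)$, whereas within each hull the vertices stay pairwise at distance $>1$.

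The structural input I would use is the paraboloid lift $p\mapsto \widehat p=(p,|p|^2,1)\in\mathbb R^{d+2}$. For the symmetric form $S$ on $\mathbb R^{d+2}$ represented by $\bigl(\begin{smallmatrix}-2I_d&0&0\\0&0&1\\0&1&0\end{smallmatrix}\bigr)$ one checks $\widehat p^{\top}S\,\widehat q=|p-q|^2$, so the squared-distance matrix $D=(|p_i-p_j|^2)$ factors as $D=\widehat U^{\top}S\,\widehat U$ with $\widehat U$ of size $(d+2)\times(2d+2)$. Since $S$ has signature $(1,d+1)$, this yields $\operatorname{rank}D\le d+2$ and the inertia bounds $n_+(D)\le 1$, $n_-(D)\le d+1$ (these are precisely the constraints imposed by the points lying in $\mathbb R^d$). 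Elementarily, the space $\{c:\sum_i c_i=0,\ \sum_i c_i p_i=0\}$ of affine dependencies then has dimension at least $d+1$.

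The crux is to convert the sign pattern into a contradiction. I would look for an affine dependency that respects the bipartition: the freedom above is ample, and when $B$ is affinely independent one can even prescribe the $A$-coordinates of such a $c$. The goal is to exhibit a point $z$ that is simultaneously a convex combination $\sum_i\alpha_i a_i$ of $A$ and a convex combination of $B$, i.e.\ a point of $\operatorname{conv}(A)\cap\operatorname{conv}(B)$; when the two hulls are disjoint one instead separates them by a hyperplane and argues on the thin slab between them. In the intersecting case, the barycentric identity $\sum_i\alpha_i|z-a_i|^2=\sum_{i<i'}\alpha_i\alpha_{i'}|a_i-a_{i'}|^2$ together with $|a_i-a_{i'}|^2>1$ makes the right-hand side large, while $z\in\operatorname{conv}(B)$ forces $|z-a_i|\le 1$ and hence $\sum_i\alpha_i|z-a_i|^2\le 1$; contrasting this with the symmetric inequality obtained by exchanging $A$ and $B$ is meant to produce the contradiction. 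The main obstacle is that a bare Radon point may concentrate near a single vertex, where the strict inequalities do not yet bite; overcoming this needs either an averaging over the $(\ge d+1)$-dimensional space of dependencies to secure a sufficiently spread combination, or the finer inertia/Cayley--Menger bookkeeping enabled by the lift (all $(d+2)$-point squared-distance minors vanish). The complete argument is Maehara's~\cite{M91}.
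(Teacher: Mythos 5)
First, note that the paper itself offers no proof to compare against: the lemma is imported verbatim from Maehara (Theorem~2 in~\cite{M91}) and stated without proof. Your attempt therefore has to stand as an independent proof, and it does not --- your own closing sentence defers ``the complete argument'' to~\cite{M91}. The preparatory material is correct (mutual proximity of $\operatorname{conv}(A)$ and $\operatorname{conv}(B)$; the paraboloid lift giving $n_+(D)\le 1$, $n_-(D)\le d+1$, $\operatorname{rank}D\le d+2$; the $(\ge d+1)$-dimensional space of affine dependencies), but the crux is absent, and --- more seriously --- the specific mechanism you propose cannot work. In the case $z\in\operatorname{conv}(A)\cap\operatorname{conv}(B)$, the only inequalities your framework produces are $\sum_{i<i'}\alpha_i\alpha_{i'}|a_i-a_{i'}|^2=\sum_i\alpha_i|z-a_i|^2\le 1$ (from $z\in\operatorname{conv}(B)$) and, from $|a_i-a_{i'}|>1$, the lower bound $\sum_{i<i'}\alpha_i\alpha_{i'}|a_i-a_{i'}|^2>\frac12\bigl(1-\sum_i\alpha_i^2\bigr)$. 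Since $\frac12\bigl(1-\sum_i\alpha_i^2\bigr)<\frac12<1$ for every probability vector $\alpha$, these two are compatible no matter how spread $\alpha$ is; even the uniform choice $\alpha_i=1/(d+1)$ only gives the lower bound $\frac{d}{2(d+1)}<\frac12$. Symmetrising does not help either: adding the $A$- and $B$-inequalities and using $\sum_{i,j}\alpha_i\beta_j|a_i-b_j|^2\le 1$ yields only $\sum_i\alpha_i^2+\sum_j\beta_j^2>0$, which is vacuous. So ``averaging over the space of dependencies to secure a sufficiently spread combination'' cannot repair the argument: the obstruction is not concentration near a vertex, it is that these inequalities never use $|A|=|B|=d+1$ at all --- indeed they are all satisfied by genuine configurations with $d$ points per side, for which the conclusion of the lemma is false. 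Any correct proof must make the dimension-sensitive input (e.g.\ your inertia bound $n_+(D)\le 1$) actually interact with the sign pattern of $D$; your sketch states that bound but never combines it with anything.

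There are two further gaps. The hulls need not intersect: Radon's theorem applied to the $2d+2$ points yields an intersecting partition of some $(d+2)$-point subfamily, not of the given bipartition $(A,B)$, and your parenthetical device --- prescribing the $A$-coordinates of an affine dependency when $B$ is affinely independent --- produces an \emph{affine} combination of $B$, not a convex one, so it does not manufacture a point of $\operatorname{conv}(A)\cap\operatorname{conv}(B)$. And in the disjoint case, ``separate by a hyperplane and argue on the thin slab'' names no property of the slab that gets contradicted; that case is simply missing. In sum, your reformulation as the sphericity bound for $K_{d+1,d+1}$ and your preliminary reductions are sound, but the proof has a genuine hole exactly where the work is, and as written it reduces --- just as the paper does --- to a citation of~\cite{M91}.
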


\noindent
We are ready to prove Lemma~\ref{cor of lem 1}.

\begin{proof}[Proof of Lemma~\ref{cor of lem 1}]
Choosing $C_3 > \max\{C_0,3\}$, where $C_0$ was chosen in Lemma~\ref{lem 1}, we have that $B(p_1,1)\cap B(p_2,1) = \emptyset$ and moreover one may find unit half-balls $B^1_{1/2}\subset B(p_1,1)$ and $B^2_{1/2}\subset B(p_2,1)$ satisfying $B^1_{1/2}\cap B(p_2,x) = B^2_{1/2}\cap B(p_1,x) = \emptyset$. Also, choose $\gamma\in [0.99,1)$ as in Lemma~\ref{3points}, and fix a set $S$ of $N_\gamma$ points at distance more than 1 in $B(0,1)\setminus \{0\}$. Now, for $\varepsilon$ sufficiently small there is an injective map $\phi: B(0,1)\to B^1_{1/2}\cup B^2_{1/2}$ for which $\phi(S)$ consists of $|S|$ points at distance more than 1, from which $k := \lfloor N_\gamma/2\rfloor$ are in $B^1_{1/2}$ and $N_{\gamma} - k$ are in $B^2_{1/2}$. Such a map indeed exists: for example, consider the hyperplane orthogonal to the vector $(1,0,\ldots,0)\in \mathbb R^d$, and then rotate it in the plane, generated by the first two coordinates. By discrete continuity of the difference of the number of points in $S$ on the two sides there is a moment when:
\begin{itemize}
    \item there in no point lying on the rotating hyperplane, and
    \item the difference between the numbers of points in $S$ on the two sides is at most 1.
\end{itemize} 
At this point, it is sufficient to ``split'' $B(0,1)$ into two halves and map them to $B^1_{1/2}$ and to $B^2_{1/2}$ in the natural way (the choice of an image for the points on the hyperplane itself is arbitrary).

Now, we argue by contradiction. Suppose that in some $(1,y)$-embedding of $G_{n,d}(x,\varepsilon)$ the images of $v_1$ and $v_2$, which we denote by $q_{v_1}$ and $q_{v_2}$, respectively, are at distance at most 1. Denote by $(q_i)_{i = 1}^{k}$ (respectively $(q_i)_{i = k+1}^{N_\gamma}$) the images of the vertices, corresponding to the points in $\phi(S)\cap  B^1_{1/2}$ (respectively in $\phi(S)\cap  B^2_{1/2}$) in the given $(1,y)$-embedding. Then, since $B(q_{v_1}, 1)\cup B(q_{v_2}, 1)$ has diameter at most $3 \le y$, we have that
\begin{itemize}
    \item $|q_i-q_j| > 1$ for all $i,j$ such that either both $i,j\ge k+1$ or both $i,j\le k$;
    \item $|q_i-q_j| \le 1$ for all $i,j$ such that $1 \le i \le k$ and $k+1 \le j \le N_{\gamma}$.
\end{itemize}

If $d\ge 3$, then $k\ge d+1$ by Lemma~\ref{easy lemma} and therefore such set of points does not exist in $\mathbb R^d$ by Lemma~\ref{lem cyclicity}, which is a contradiction with our assumption. If $d = 2$, then $k\ge 2$ and $N_{\gamma}-k \ge 3$, which again leads to contradiction with the statement for $d=2$ in Lemma~\ref{3points}.
\end{proof}

\begin{corollary}\label{cor1}
Fix $C_3 > 0$ as in Lemma~\ref{cor of lem 1}, $x, y > C_3$ satisfying $x\neq y$ and integers $n\ge x$ and $k > 1$. Then, for every pair of vertices $v_1,v_2$ of $G_{n,d} (x, \varepsilon)$ whose images $p_1,p_2$ in the natural embedding satisfy $|p_1-p_2|< k$ and $\max\{|p_1|,|p_2|\}\le n-x-1$, the images of $v_1$ and $v_2$ in every $(1,y)$-annulus embedding of $G_{n,d} (x, \varepsilon)$ in $\mathbb R^d$ are at distance at most $k$ for every sufficiently small $\varepsilon > 0$ (depending only on $p_1, p_2$ and $k$).
\end{corollary}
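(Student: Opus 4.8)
The plan is to connect $p_1$ and $p_2$ by a short chain of lattice points lying along the segment joining them, to control each individual link of the chain via Lemma~\ref{lem 1}, and to conclude by the triangle inequality. Concretely, I would fix an arbitrary $(1,y)$-embedding of $G_{n,d}(x,\varepsilon)$ and construct a sequence of lattice points $p_1 = r_0, r_1, \dots, r_k = p_2$ in $\varepsilon\mathbb Z^d$ such that $|r_{i}-r_{i+1}|\le 1$ in the natural embedding for every $i$, and such that each $r_i$ lies in $B(0,n-x)$. Applying Lemma~\ref{lem 1} to each consecutive pair $(r_i,r_{i+1})$ then shows that the images of the corresponding vertices in the fixed $(1,y)$-embedding are at distance at most $1$; summing these $k$ inequalities by the triangle inequality yields that the images of $v_1$ and $v_2$ are at distance at most $k$, which is exactly the claim. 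Note that the hypotheses of Lemma~\ref{lem 1} are met, since $x,y>C_3>C_0$ forces $x,y\ge C_0$, and $n\ge x$ holds by assumption.

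To build the chain, I would subdivide the segment $[p_1,p_2]$ into $k$ equal parts, setting $s_i = p_1 + (i/k)(p_2-p_1)$ for $0\le i\le k$, so that consecutive subdivision points satisfy $|s_i - s_{i+1}| = |p_1-p_2|/k < 1$ by the hypothesis $|p_1-p_2| < k$. I then round each intermediate $s_i$ to a nearest point $r_i\in\varepsilon\mathbb Z^d$ (keeping $r_0 = p_1$ and $r_k = p_2$, which are already lattice points), so that $|s_i - r_i|\le \varepsilon\sqrt d/2$. Provided $\varepsilon$ is small enough that $\varepsilon\sqrt d < 1 - |p_1-p_2|/k$ — a condition depending only on $p_1,p_2,k$ — the triangle inequality gives $|r_i - r_{i+1}|\le |p_1-p_2|/k + \varepsilon\sqrt d < 1$, as required. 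Moreover, since each $s_i$ lies on the segment $[p_1,p_2]$, convexity of the Euclidean norm yields $|s_i|\le \max\{|p_1|,|p_2|\}\le n-x-1$, and hence $|r_i|\le n-x-1+\varepsilon\sqrt d/2\le n-x$ once $\varepsilon$ is small; in particular each $r_i$ is the image of a vertex of $G_{n,d}(x,\varepsilon)$ and is eligible for Lemma~\ref{lem 1}.

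The only point requiring genuine care is that rounding to the lattice must not inflate any link length beyond $1$: this is precisely why the subdivision is taken strictly finer than unit length (each raw step has length $|p_1-p_2|/k<1$) and why $\varepsilon$ must be chosen small in a manner depending on $p_1$, $p_2$ and $k$. I would also emphasise that consecutive lattice points $r_i, r_{i+1}$ need not be adjacent in $G_{n,d}(x,\varepsilon)$ — typically they are at natural distance strictly less than $1$ and hence non-adjacent — but this is harmless, since Lemma~\ref{lem 1} applies to \emph{every} pair of vertices at natural distance at most $1$, irrespective of adjacency, and its conclusion (distance at most $1$ in the $(1,y)$-embedding) is exactly what the telescoping triangle inequality consumes.
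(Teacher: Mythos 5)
Your proposal is correct and follows essentially the same route as the paper's own proof: subdivide the segment $[p_1,p_2]$ into $k$ equal pieces of length $|p_1-p_2|/k<1$, round each subdivision point to a nearest point of $\varepsilon\mathbb Z^d$, verify that each link still has length less than $1$ and that each rounded point stays within $B(0,n-x)$, then apply Lemma~\ref{lem 1} to each of the $k$ links and conclude by the triangle inequality. Your version is in fact slightly more careful than the paper's on one detail: you use the dimension-correct rounding error $\varepsilon\sqrt d/2$, whereas the paper bounds it by $2\varepsilon$, which is only valid for small $d$ unless $\varepsilon$ is additionally taken small depending on $d$ (harmless, since $\varepsilon$ may depend on $d$ throughout).
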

\begin{proof}
Fix $\delta = k - |p_1 - p_2|$ and $\varepsilon \in (0, (5k)^{-1}\delta]$. Divide the segment between $p_1$ and $p_2$ into $k$ segments $x_0x_1 = p_1x_1, x_1x_2,\dots, x_{k-2}x_{k-1}, x_{k-1}x_k = x_{k-1}p_2$ of equal length. Then, for all $i\in [k]\cup \{0\}$, associate to $x_i$ a nearest vertex $x'_i\in \varepsilon \mathbb Z^d$ (so $x'_0 = x_0$ and $x'_k = x_k$). Clearly for all $i\in [k]\cup \{0\}$ one has $|x_i - x'_i|\le 2\varepsilon$ and $|x'_i|\le 2\varepsilon + x_i\le 2\varepsilon + n - x - 1 < n - x$. By the triangle inequality $|x'_i - x'_{i-1}|\le 1 - k^{-1}\delta + 2\cdot 2\varepsilon < 1$, which means that $(x'_{i-1}x'_i)_{i\in [k]}$ are all of length at most 1, so we conclude by applying Lemma~\ref{lem 1} $k$ times.
\end{proof}

\begin{corollary} \label{exxpand}
Fix $x, y > C_3$ satisfying $x\neq y$ and integers $n\ge x$ and $k>1$. Then, for every pair of vertices $v_1,v_2$ of $G_{n,d} (x, \varepsilon)$ whose images $p_1,p_2$ in the natural embedding satisfy $|p_1-p_2|< kx$, the images of $v_1$ and $v_2$ in every $(1,y)$-annulus embedding of $G_{n,d} (x, \varepsilon)$ in $\mathbb R^d$ are at distance at most $ky$ for every sufficiently small $\varepsilon > 0$ (depending only on $p_1, p_2, x$ and $k$).
\end{corollary}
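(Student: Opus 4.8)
The plan is to run the same chaining scheme as in the proof of Corollary~\ref{cor1}, but to replace each use of Lemma~\ref{lem 1} by a single-step statement that converts the $x$-scale of the natural embedding into the $y$-scale of the target embedding by means of the edge relation. Concretely, the key claim is: if two vertices of $G_{n,d}(x,\varepsilon)$ have natural images $r,r'$ with $|r-r'|<x$ and $\max\{|r|,|r'|\}\le n-x$, then in any $(1,y)$-embedding their images are at distance at most $y$.

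First I would establish this claim by a dichotomy on $|r-r'|$. If $|r-r'|\in[1,x)$, then the natural distance lies in $[1,x]$, so the two vertices are adjacent in $G_{n,d}(x,\varepsilon)$; hence in any $(1,y)$-embedding their images lie at distance in $[1,y]$, in particular at most $y$. If instead $|r-r'|<1$, then $|r-r'|\le 1$ and Lemma~\ref{lem 1} applies verbatim, placing the images at distance at most $1\le y$ (recall $y>C_3>1$). Either way the images are at distance at most $y$, which proves the claim.

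Next I would set up the chain precisely as in Corollary~\ref{cor1}. Put $\delta=x-|p_1-p_2|/k>0$ and fix $\varepsilon\in(0,\delta/5]$. Subdivide the segment $[p_1,p_2]$ into $k$ equal subsegments with endpoints $x_0=p_1,x_1,\dots,x_k=p_2$, each of length $|p_1-p_2|/k<x$, and associate to every $x_i$ a nearest lattice point $x_i'\in\varepsilon\mathbb Z^d$ (so $x_0'=p_1$ and $x_k'=p_2$). Then $|x_i-x_i'|\le 2\varepsilon$, whence $|x_{i-1}'-x_i'|\le |p_1-p_2|/k+4\varepsilon<x$; moreover each $x_i$ lies in the convex hull of $\{p_1,p_2\}$, so $|x_i'|\le\max\{|p_1|,|p_2|\}+2\varepsilon$, and (exactly as in Corollary~\ref{cor1}, under the inherited assumption $\max\{|p_1|,|p_2|\}\le n-x-1$) the pairs $(x_{i-1}',x_i')$ satisfy the position hypothesis of the claim. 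Applying the claim to each of the $k$ consecutive pairs and summing by the triangle inequality yields $|q_{v_1}-q_{v_2}|\le ky$ for the images $q_{v_1},q_{v_2}$ of $v_1,v_2$, as desired.

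The main obstacle is purely the bookkeeping around the subunit case of the claim: one must check that after snapping to $\varepsilon\mathbb Z^d$ a pair $(x_{i-1}',x_i')$ with short natural separation still meets both the distance hypothesis $|x_{i-1}'-x_i'|\le 1$ and the position hypothesis of Lemma~\ref{lem 1}. This is exactly where $\varepsilon$ must be taken small as a function of $p_1,p_2,x,k$, and where the (implicit) position bound $\max\{|p_1|,|p_2|\}\le n-x-1$ is used; the $x\to y$ conversion itself is immediate from adjacency once these routine checks are in place, and the remainder of the argument is the same triangle-inequality chaining used for Corollary~\ref{cor1}.
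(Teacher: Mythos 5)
Your proposal is correct and follows essentially the same route as the paper: the paper's proof establishes the identical single-step claim via the same dichotomy (Lemma~\ref{lem 1} when the natural distance is at most $1$, adjacency when it lies in $[1,x]$, forcing distance at most $y$ in any $(1,y)$-embedding), and then chains exactly as in Corollary~\ref{cor1} with steps of length $x$ and $y$ in place of $1$. Your explicit bookkeeping of the lattice-snapping and the implicit position hypothesis $\max\{|p_1|,|p_2|\}\le n-x-1$ only spells out details the paper leaves to the reader.
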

\begin{proof}
Consider two vertices $v_1,v_2$ of $G_{n,d}(x,\varepsilon)$ whose images $p_1,p_2$ in the natural embedding are at a distance at most $x$. If they are at a distance at most $1$, then the images of $v_1,v_2$ in an $(1,y)$-annulus embedding of $G_{n,d}(x,\varepsilon)$ are also at a distance at most $1$ by Lemma \ref{lem 1} (and the fact that $C_3 > C_0$, as stated in Lemma~\ref{cor of lem 1}). If $p_1$ and $p_2$ are at a distance between $1$ and $x$, then we have that $v_1$ and $v_2$ are connected by an edge in $G_{n,d}(x,\varepsilon)$. Therefore, in any $(1,y)$-annulus embedding they should be at a distance between $1$ and $y$. We showed that if two vertices of $G_{n,d}(x,\varepsilon)$ have images in the natural embedding at a distance less then $x$, they should be at a distance less than $y$ in any $(1,y)$-annulus embedding. To finish the proof we do the same argument as in Corollary~\ref{cor1} but with steps of length $x$ and $y$ instead of steps of length $1$.
\end{proof}

For every $d\in \mathbb N$ and every pair of point sets $K,L\subseteq \mathbb R^d$, define $M_d(K,L)$ as the maximum number of disjoint congruent copies of $L$ included in $K$, that is, the maximum size of an $L$-packing of $K$.

\begin{proof}[Proof of Theorem \ref{thm 1} for $d\ge 2$]
We set $C = C_3$, where $C_3$ was provided in Lemma~\ref{easy lemma}, $x, y\geq C$ satisfying $x\neq y$,  any sufficiently large $n$ and any sufficiently small $\varepsilon>0$ (these two last parameters will be specified later). We will prove that $G_{n,d}(x,\varepsilon)$ cannot be realised in any $(1,y)$-annulus embedding. Denote by $\mathcal{X}$ the natural embedding of $G_{n,d}(x,\varepsilon)$ and fix some $(1,y)$-annulus embedding $\mathcal{Y}$ of $G_{n,d}(x,\varepsilon)$. We split the proof in two cases.

\noindent
{\bf{Case 1: $x<y$}}\\
Let $k$ be a positive integer. Consider a set of vertices $V$ in $G_{n,d}(x,\varepsilon)$ with image $V_x$ in $\mathcal X$, which is included in $B(0,k-1)$ and for every $p_1,p_2\in V_x$ we have $|p_1 - p_2| > x$. For $n > k+x+1$, by Lemma~\ref{cor of lem 1} the image $V_y$ of $V$ in $\mathcal{Y}$ is a set of points at a distance more than $y$ from each other. Moreover, by Corollary~\ref{cor1} and by choosing $\varepsilon$ sufficiently small, these points are contained in a closed ball of radius $k$. 

Fix a sufficiently small $\delta > 0$ and consider a packing $\mathcal B$ of the ball $B(0,k-1+x/2)$ with balls of radius $x/2+\delta$. By choosing a sufficiently small $\varepsilon = \varepsilon(\delta) > 0$, for each ball $B\in \mathcal B$ there is a point $u\in \mathcal{X}$ at a distance at most $\delta/2$ from the centre of $B$. Thus, from $\mathcal B$ one may construct a packing $\mathcal B_{\mathcal X}$ of $B(0,k-1+x/2)$ with balls of radius $x/2+\delta/2$ and centres in $\mathcal{X}$. Consequently, one may choose a set $V$ of size at least $M_d(B(0,k-1+x/2), B(0, x/2+\delta))$.

However, in $\mathcal Y$ the balls with radii $y/2$ and centers $V_y$ pack the ball $B(0,k+y/2)$. Moreover, since $x < y$, by choosing $\delta\le (y-x)/4$ we obtain that
\begin{equation}\label{m2ineq}
    M_d(B(0,k-1+x/2), B(0, (x+y)/4))\le M_d(B(0,k+y/2), B(y/2))
\end{equation}
for all $k\in \mathbb N$. However, for all fixed $r > 0$ the limit of $M_d(B(0,k), B(0,r)) \mathrm{vol}(B(0,r))/\mathrm{vol}(B(0,k))$ when $k\to +\infty$ exists and is given by the packing density of $\mathbb R^d$ with unit balls. Since
\begin{equation*}
    \lim_{k\to +\infty}\frac{\mathrm{vol}(B(0,k-1+x/2))}{\mathrm{vol}(B(0,k+y/2))} = 1,
\end{equation*}
we conclude that $\mathrm{vol}(B(0, (x+y)/4))\ge \mathrm{vol}(B(0, y/2))$, which leads to a contradiction.

\noindent
{\bf{Case 2: $x>y$}}\\
Let $k$ be a positive integer. Consider a set of vertices $V$ in $G_{n,d}(x,\varepsilon)$ with image $V_x$ in $\mathcal X$, which is included in $B(0,kx-1)$ and for every $p_1,p_2\in V_x$ we have $|p_1-p_2| > 1$. For $n>k+x+1$ Lemma~\ref{lem 1} implies that the image $V_y$ of $V$ in the embedding $\mathcal{Y}$ is a set of points at a distance more than $1$ from each other, and by Corollary~\ref{exxpand} and by choosing $\varepsilon$ sufficiently small these points are contained in a ball of radius $ky$.

In the exact same way as in the first case we find out that for every $\delta > 0$ one has
\begin{equation}\label{eq 1}
M_d(B(0,kx-1/2+\delta), B(0, 1/2+\delta))\le M_d(B(0,ky+1/2), B(0,1/2))
\end{equation}
Hence,
\begin{equation}\label{eq 2}
\begin{split}
&\lim_{k\to +\infty} \frac{M_d(B(0,kx-1/2+\delta), B(0, 1/2+\delta)) \mathrm{vol}(B(0, 1/2+\delta))}{\mathrm{vol}(B(0,kx-1/2+\delta))}\\
&\;\;\,\text{and}\quad \lim_{k\to +\infty} \frac{M_d(B(0,ky+1/2), B(0, 1/2)) \mathrm{vol}(B(0, 1/2))}{\mathrm{vol}(B(0,ky+1/2))}
\end{split}
\end{equation}
exist and are both equal to the packing density of $\mathbb R^d$ with unit balls. However,
\begin{equation}\label{eq 3}
    \lim_{k\to +\infty} \frac{\mathrm{vol}(B(0,kx-1/2+\delta))}{\mathrm{vol}(B(0,ky+1/2))} = \left(\frac{x}{y}\right)^d,
\end{equation}
so \eqref{eq 1}, \eqref{eq 2} and \eqref{eq 3} together imply that $x^{-d} \mathrm{vol}(B(0, 1/2+\delta))\ge y^{-d} \mathrm{vol}(B(0, 1/2))$, which leads to a contradiction by choosing $\delta$ sufficiently small. Thus, the proof of Theorem~\ref{thm 1} is completed.
\end{proof}

It remains to deal with the case $d=1$. Although the main points of the proofs are the same, the proof is technically simpler in this case.

\begin{proof}[Proof of Theorem~\ref{thm 1} for $d=1$]
Again, consider the graph $G_{n,d}(x, \varepsilon)$.

\begin{claim}\label{cl 1}
Lemma~\ref{lem 1} holds for $d=1$ as well.
\end{claim}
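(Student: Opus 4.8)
The plan is to observe that the proof of Lemma~\ref{lem 1} carries over to $d=1$ essentially unchanged, once one supplies the one-dimensional analogues of its two geometric inputs. First I would note that Lemmas~\ref{lem 1.1} and~\ref{lem 1.2} are stated and proved for an arbitrary dimension $d$ (the only place where a specific dimension is written, namely the lattice $\varepsilon\mathbb Z^2$ in the proof of Lemma~\ref{lem 1.2}, should read $\varepsilon\mathbb Z^d$), so they remain available for $d=1$.

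Second, I would compute $N_\gamma$ and prove the $d=1$ replacement for Lemma~\ref{3points}. For $d=1$ and any $\gamma\in(1/2,1)$ one has $N_\gamma=2$: the endpoints $\pm\gamma$ of the interval $B(0,\gamma)=[-\gamma,\gamma]$ are at distance $2\gamma>1$, while three points in an interval of length $2\gamma<2$ cannot be pairwise at distance more than $1$. The required analogue of Lemma~\ref{3points} is then immediate: if $a,b\in\mathbb R$ satisfy $|a-b|>1$, the set $B(a,1)\cap B(b,1)$ is an interval of length $2-|a-b|<1$, and hence cannot contain two points at distance more than $1$; rescaling by $y$, if $|a-b|>y$ then $B(a,y)\cap B(b,y)$ contains no two points at pairwise distance more than $y$.

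With these ingredients in hand I would repeat the argument of Lemma~\ref{lem 1} verbatim. Fixing $\gamma\in(1/2,1)$ close to $1$ and $C_0>(1-\gamma)^{-1}$ guarantees $B(p_2,x)\supseteq B(p_1,\gamma x)$, so every vertex furnished by Lemma~\ref{lem 1.2} applied to $v_1$ is in fact a common neighbour of $v_1$ and $v_2$; thus in any $(1,y)$-embedding there are $N_\gamma=2$ common neighbours of $v_1,v_2$ whose images are at distance more than $y$. If the images $q_{v_1},q_{v_2}$ of $v_1,v_2$ were at distance more than $y$, these two common neighbours would both lie in $B(q_{v_1},y)\cap B(q_{v_2},y)$, an interval of length less than $y$ by the previous paragraph, contradicting that they are more than $y$ apart.

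Finally I would close the argument exactly as in the higher-dimensional case: as in that proof, the relevant pairs $v_1,v_2$ are at distance $|p_1-p_2|<1$ in the natural embedding, below the inner radius, hence non-adjacent, so in any $(1,y)$-embedding their distance is either less than $1$ or greater than $y$; having excluded the latter, it must be at most $1$. The step I expect to require the most care is precisely this last one — ensuring the sharp bound $1$ rather than merely $y$ — together with confirming that the packing argument behind Lemma~\ref{lem 1.2} (clique extraction via Lemma~\ref{lem 1.1}, and the bound on cliques inside a ball of radius $1$) specialises correctly to the degenerate one-dimensional geometry; both are routine but must be verified.
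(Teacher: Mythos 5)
Your proposal is correct, but it follows a genuinely different route from the paper. The paper proves Claim~\ref{cl 1} by a direct, self-contained one-dimensional argument: assuming the images $q_1,q_2$ of $v_1,v_2$ are at distance more than $y$, all common neighbours of $v_1,v_2$ must be embedded in the intersection of the two annuli around $q_1$ and $q_2$, which is a segment of length less than $y$; the common neighbourhood is then split into four groups $V_1,\dots,V_4$ according to position in the natural embedding, non-adjacency between certain pairs of groups is used to force all of them into an interval of bounded length, and this contradicts the presence of a $K_6$ among the common neighbours. You instead re-run the general machinery of Lemmas~\ref{lem 1.1} and~\ref{lem 1.2} in dimension one, compute $N_\gamma=2$, substitute for Lemma~\ref{3points} the observation that $B(a,1)\cap B(b,1)$ is an interval of length $2-|a-b|<1$, and then repeat the proof of Lemma~\ref{lem 1} verbatim. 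Your route buys uniformity: it makes clear that nothing in the $d\ge 2$ proof of Lemma~\ref{lem 1} actually breaks at $d=1$, and that the real reason the paper must treat $d=1$ separately lies in Lemma~\ref{cor of lem 1} (whose proof needs $N_\gamma\ge 2d+2$ from Lemma~\ref{easy lemma}, false for $d=1$ where $N_\gamma=2$), i.e.\ in Claim~\ref{cl 2} rather than Claim~\ref{cl 1}. It is also more robust than the paper's route as written: the paper's assertion that there are no edges between $V_2$ and $V_4$, nor between $V_1$ and $V_3$, fails for large $x$ (take $p_1=0$, $p_2=1/2$, $x=10$: common neighbours at $-1\in V_2$ and $11/2\in V_4$ are at distance $13/2\in[1,x]$, hence adjacent); only the pair $V_1,V_4$ is guaranteed to be at distances more than $x$, so the paper's argument needs a repair (for instance, $V_1$ alone contains a $K_6$ and yet, being non-adjacent to $V_4$, must be embedded within distance $1$ of the image of any vertex of $V_4$, an immediate contradiction). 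The price of your route is exactly the verification you flag as routine: for example, in Lemma~\ref{lem 1.2} the condition $C_2>c_2^{-1}(M+1)$ guarantees cliques of size $M+1$ only when $d\ge2$, and for $d=1$ it must be strengthened to $C_2>\left(c_2^{-1}(M+1)\right)^2$ since Lemma~\ref{lem 1.1} is invoked with $C_1=\sqrt{C_2}$. Finally, like the paper's own proof of Lemma~\ref{lem 1}, your argument silently sets aside the boundary case $|p_1-p_2|=1$ (where $v_1v_2$ is an edge and only a bound of $y$ on the embedded distance follows); this is harmless, since the lemma is only ever applied, in Corollary~\ref{cor1}, to pairs at distance strictly less than $1$.
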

\begin{proof}[Proof of Claim~\ref{cl 1}]
We argue by contradiction. Let without loss of generality $p_1 < p_2$. Suppose that in some $(1,y)$-embedding, the images $q_1, q_2$ of $v_1, v_2$, respectively, are at distance more than $y$. Then, all common neighbours of $v_1$ and $v_2$ in $G_{n,1}(x,\varepsilon)$ have images in the $(1,y)$-embedding, contained in the intersection of the annuli around $q_1$ and $q_2$ with radii 1 and $y$, which is a segment of length less than $y$.

Now, divide the common neighbourhood of $v_1$ and $v_2$ into 4 groups: ones with images in the interval $[p_1-x, p_1 - x/2]$ in the natural $(1,x)$-embedding (set $V_1$), ones with images in the interval $(p_1-x/2, p_1]$ (set $V_2$), ones with images in the interval $[p_2, p_2+x/2)$ (set $V_3$) and ones with images in the interval $[p_2+x/2, p_2+x]$ (set $V_4$). Note that there are no edges between $V_2$ and $V_4$, between $V_4$ and $V_1$, and between $V_1$ and $V_3$. Hence, in the given $(1,y)$-embedding, the image of every vertex in $V_2$ (respectively in $V_4$ and in $V_1$) is at distance at most 1 from the image of every vertex in $V_4$ (respectively in $V_1$ and in $V_3$). Thus, for every sufficiently small $\varepsilon$, the images of $V := V_1\cup V_2\cup V_3\cup V_4$ are contained in an interval of length at most 4: indeed, fixing a vertex $v\in V_2$, the image of every vertex in $V$ in the $(1,y)$-embedding is at distance at most 2 from the image of $v$. However, this means that the set $V$ induces no $K_6$ in $G_{n,1}(x,\varepsilon)$, which is not the case for $x$ sufficiently large and $\varepsilon$ sufficiently small. 

\begin{figure}[ht] 
\centering
\hbox{ \includegraphics[width=\textwidth]{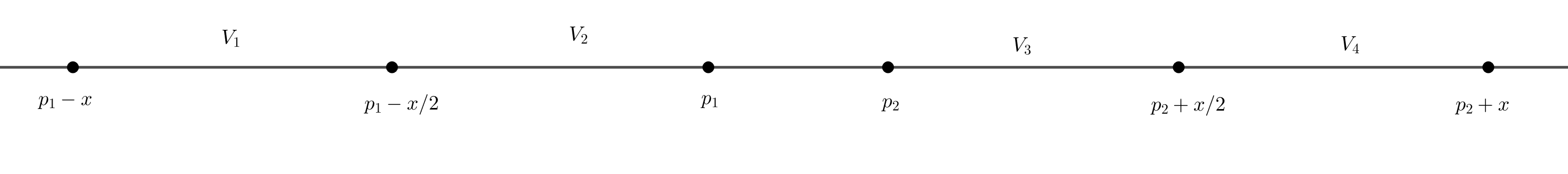}}
\caption{The configuration from the proof of Claim~\ref{cl 1}.}
\label{fig6}
\end{figure}
\end{proof}

\begin{claim}\label{cl 2}
Lemma~\ref{cor of lem 1} holds for $d=1$ as well.
\end{claim}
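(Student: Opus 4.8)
The plan is to mimic the contradiction scheme of the proof of Lemma~\ref{cor of lem 1} for $d\ge 2$, but to replace its use of the high-dimensional obstruction (Lemma~\ref{lem cyclicity} together with the largeness of $N_\gamma$) by a direct one-dimensional argument, since for $d=1$ one only has $N_\gamma=2$ and the sphericity route collapses. Assume without loss of generality that $p_1<p_2$, so that $p_2-p_1>x$, and recall that $v_1v_2$ is then a non-edge of $G_{n,1}(x,\varepsilon)$. In any $(1,y)$-embedding the images $q_1,q_2$ of $v_1,v_2$ therefore satisfy $|q_1-q_2|<1$ or $|q_1-q_2|>y$; I would suppose for contradiction that $|q_1-q_2|<1$ and then produce four vertices whose images must realise a configuration of four reals that is impossible on the line.

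First I would introduce two auxiliary vertices $a$ and $b$ of $G_{n,1}(x,\varepsilon)$ whose images in the natural embedding lie at distance in $[1,1+\varepsilon]$ from $p_1$ and $p_2$ respectively, placed on the \emph{far} sides: $a$ to the left of $p_1$ and $b$ to the right of $p_2$. Such lattice points exist for $\varepsilon$ small and have norm at most $n-x+1$, which is well inside the region where Claim~\ref{cl 1} and Corollary~\ref{cor1} apply, since those results only need the relevant radius-$\gamma x$ ball to stay inside $B(0,n)$, and $(1-\gamma)x$ exceeds any fixed constant for $x$ large. Set $A=\{a,v_1\}$ and $B=\{v_2,b\}$. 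Then $av_1$ and $v_2b$ are edges (natural distance in $[1,x]$), while all four cross pairs $av_2$, $ab$, $v_1v_2$, $v_1b$ are non-edges, because the far placement gives each of them natural distance at least $p_2-p_1>x$.

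Next I would confine the four images to a short window. Corollary~\ref{cor1} holds for $d=1$ by the same proof (it invokes only Lemma~\ref{lem 1}, i.e.\ Claim~\ref{cl 1}), so applying it with $k=2$ gives $|q_a-q_1|\le 2$ and $|q_b-q_2|\le 2$; together with $|q_1-q_2|<1$ this places $q_a,q_1,q_2,q_b$ in an interval of length at most $6$. Choosing $C_3>6$ forces this window to have length $<y$, so every non-adjacent pair among the four vertices, having image distance $<y$ and $\notin[1,y]$, in fact has image distance \emph{strictly} less than $1$. On the other hand the two edges give $|q_a-q_1|\ge 1$ and $|q_2-q_b|\ge 1$. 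Thus, writing $A_1<A_2$ for the two images in $A$ and $B_1,B_2$ for the two images in $B$, I obtain four reals with $|A_1-A_2|\ge 1$, $|B_1-B_2|\ge 1$ and all four cross distances $<1$.

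Finally I would invoke the elementary fact that no such four reals exist, which is the one-dimensional ($2d+2=4$) instance of the obstruction behind Lemma~\ref{lem cyclicity}: since $|A_i-B_j|<1$ for both $i$, each $B_j$ lies in the open interval $(A_2-1,A_1+1)$, whose length is $2-(A_2-A_1)\le 1$; hence $|B_1-B_2|<1$, contradicting $|B_1-B_2|\ge 1$. This contradiction shows $|q_1-q_2|>y$, proving the claim. I expect the only genuine subtlety to be the appearance of the \emph{strict} inequalities: in the $d\ge 2$ proof strictness was supplied for free by the richness of the packing/sphericity obstruction, whereas here it must be extracted from the fact that the cross pairs are honest non-edges confined to a window of length $<y$, which is exactly what rules out image distance equal to $1$. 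The remaining points — the existence of the lattice vertices $a,b$ and the verification of the norm bounds needed to apply Claim~\ref{cl 1} and Corollary~\ref{cor1} near the boundary of $B(0,n)$ — are routine and handled by taking $\varepsilon$ small and $x$ large.
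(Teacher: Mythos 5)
Your proof is correct, but it is not the route the paper takes for Claim~\ref{cl 2}; it is instead a one-dimensional transplant of the paper's $d\ge 2$ proof of Lemma~\ref{cor of lem 1}. The paper's own argument for $d=1$ also starts from the contradiction hypothesis $|q_1-q_2|<1$, but then runs an induction along the line: since every vertex with natural position in $(-\infty,p_1]$ is non-adjacent to $v_2$, repeated use of Claim~\ref{cl 1} (with $y$ large) forces all such vertices to have images within distance $1$ of $q_2$; this is impossible because those vertices contain a $K_4$, whose images would be four reals pairwise at distance at least $1$ confined to an interval of length $2$. You instead build a local four-vertex gadget $\{a,v_1\}\cup\{v_2,b\}$ with two edges and four cross non-edges, confine its images to a window of length $<y$ via the $d=1$ version of Corollary~\ref{cor1}, and rule out the resulting pattern (within-pair distances $\ge 1$, cross distances $<1$) by the elementary interval computation, i.e.\ the $d=1$, $K_{2,2}$ case of the obstruction behind Lemma~\ref{lem cyclicity} --- correctly reproved by hand, which is necessary since your inequalities are non-strict/strict in the opposite pattern to that lemma's hypotheses. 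Your version is more local (four vertices, no induction over an unbounded set of vertices) and makes the $d=1$ case structurally parallel to the $d\ge 2$ case; its only cost is that the auxiliary vertices $a,b$ and the intermediate lattice points have norm slightly exceeding the bound $n-x$ required in Claim~\ref{cl 1} and Corollary~\ref{cor1}, so one must observe that their proofs tolerate a constant relaxation of that bound. This boundary issue is indeed routine for $x$ large and $\varepsilon$ small (and the paper's own induction, which walks arbitrarily far to the left of $p_1$, silently incurs the same issue), though your stated justification in terms of a ``radius-$\gamma x$ ball'' borrows language from the $d\ge 2$ lemmas that has no literal meaning for the $d=1$ proof of Claim~\ref{cl 1}; the honest reason is that truncating an $\varepsilon$-sliver, or even a constant-width sliver, of the common neighbourhoods near the boundary of $B(0,n)$ still leaves the cliques ($K_6$ in Claim~\ref{cl 1}) needed there.
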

\begin{proof}[Proof of Claim~\ref{cl 2}]
We argue by contradiction. Let without loss of generality $p_1 < p_2$. Suppose that in some $(1,y)$-embedding, the images $q_1, q_2$ of $v_1, v_2$, respectively, are at distance less than 1. Then, if $y\ge 2$ and $\varepsilon$ is sufficiently small, one may conclude by Claim~\ref{cl 1} and an easy induction that for every vertex with position in the interval $(-\infty, p_1]$ in the $(1,x)$-embedding, its position in the $(1,y)$-embedding is at distance at most 1 from $q_2$. At the same time, for all sufficiently large $y$ and sufficiently small $\varepsilon$, this contradicts with the fact that the vertices with images in $(-\infty, p_1]$ in the $(1,x)$-embedding induce a graph, containing a $K_4$.
\end{proof}

The remainder of the proof is analogous to the proof in the case $d\geq 2$.
\end{proof}

\section{Proof of Theorem~\ref{thm 2}}\label{sec pf thm 2}

\subsection{Proof of the upper bound of Theorem~\ref{thm 2}} \label{chi_main}
In the heart of the proof of the upper bound of Theorem~\ref{thm 2} is the following algorithm, which colours the vertices of an annulus graph $G \in \mathcal A_d(r_1,r_2)$ properly (i.e. no two adjacent vertices share the same colour). First, given an annulus embedding of $G$, rotate it so that no two (images of) vertices in $V(G)$ have coinciding last coordinates. Then, fix an affine hyperplane orthogonal to the last coordinate axis of $\mathbb R^d$ and which is below the entire vertex set of $G$. Then, start moving this hyperplane continuously upwards. We colour the vertices of $G$ in colours indexed by the positive integers. When (the image of a) vertex in $G$ meets the moving hyperplane:
\begin{itemize}
    \item if this vertex has already been coloured, do nothing;
    \item if this vertex has not been coloured before, consider the set of uncoloured vertices at distance at most $r_1/2$ from it. Colour all of these vertices in the smallest colour which is still available for all of them.
\end{itemize}

At each step when the hyperplane meets a still uncoloured vertex $v$, the algorithm colours a set $S_v$ of previously uncoloured vertices (of course, $v\in S_v$). For every vertex $u\in S_v$, call $v$ the \emph{token} of $u$ and denote $v=t(u)$. Observe that, first, of all vertices coloured at the same moment the token has the smallest last coordinate, and second, vertices that have the same token also have the same colour. By construction the above algorithm produces a proper colouring of $G$, which we call $c_G$. Moreover, note that the particular case when $r_1 = 0$ coincides with Peeters' sweeping algorithm~\cite{Pee91}.

We proceed with a lemma that we will use in the proof of our theorem. In the sequel, we tacitly identify any $(r_1,r_2)$-annulus graph with an arbitrary $(r_1,r_2)$-annulus embedding of this graph in $\mathbb R^d$.

\begin{lemma}\label{lemcol}
Consider a graph $G\in\mathcal{A}_d(r_1,r_2)$. For every vertex $v$ in $G$, the ball $B(v,r_1)$ contains vertices in at most $7^d$ colours in $c_G$.
\end{lemma}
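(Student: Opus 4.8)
The plan is to bound the number of colours appearing in $B(v,r_1)$ by the number of distinct \emph{tokens} carried by the vertices lying in this ball, and then to bound that quantity by a sphere-packing estimate. Since any two vertices with the same token receive the same colour, the number of colours occurring among the vertices of $B(v,r_1)$ is at most the number of distinct values of $t(u)$ as $u$ ranges over $V(G)\cap B(v,r_1)$. I may assume $r_1>0$, since for $r_1=0$ the ball $B(v,r_1)$ reduces to $\{v\}$ and the statement is trivial.

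First I would locate these tokens geometrically. If $u\in B(v,r_1)$ and $w=t(u)$, then $u\in S_w$, so by construction $|u-w|\le r_1/2$; the triangle inequality then gives $|w-v|\le |w-u|+|u-v|\le r_1/2+r_1=3r_1/2$. Hence every token of a vertex in $B(v,r_1)$ lies in $B(v,3r_1/2)$.

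The key step, and the main structural input, is to show that distinct tokens are pairwise at distance more than $r_1/2$. Let $w_1\neq w_2$ be tokens and assume, without loss of generality (using that the embedding has been rotated so that the last coordinates are pairwise distinct), that $w_1$ is processed before $w_2$, i.e. has the smaller last coordinate. Because $w_2$ is itself a token, it is uncoloured at the moment the sweeping hyperplane reaches it, and in particular it is uncoloured at the earlier moment when $w_1$ is processed. If we had $|w_1-w_2|\le r_1/2$, then $w_2$ would be an uncoloured vertex within distance $r_1/2$ of $w_1$ at that moment, and the algorithm would therefore have coloured it during the processing of $w_1$; this would give $t(w_2)=w_1\neq w_2$, contradicting that $w_2$ is a token. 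Hence $|w_1-w_2|>r_1/2$.

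Finally I would conclude by a volume estimate. The tokens of vertices in $B(v,r_1)$ form a set of points contained in $B(v,3r_1/2)$ that are pairwise at distance more than $r_1/2$; hence the open balls of radius $r_1/4$ centred at these tokens are pairwise disjoint and all contained in $B(v,3r_1/2+r_1/4)=B(v,7r_1/4)$. Comparing volumes, the number of tokens is at most $\mathrm{vol}(B(0,7r_1/4))/\mathrm{vol}(B(0,r_1/4))=7^d$, which bounds the number of colours in $B(v,r_1)$ and completes the argument. I expect the only delicate point to be the token-separation claim; once it is in place the counting is routine, and the constant $7$ arises precisely as the ratio $(3/2+1/4)/(1/4)$.
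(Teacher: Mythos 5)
Your proof is correct and follows essentially the same route as the paper's: tokens of vertices in $B(v,r_1)$ lie in $B(v,3r_1/2)$, distinct tokens are pairwise more than $r_1/2$ apart (else the later one would have been coloured by the earlier one), and a packing/volume comparison of balls of radius $r_1/4$ inside $B(v,7r_1/4)$ gives the bound $7^d$. The only cosmetic difference is that you make the volume-ratio computation explicit where the paper invokes the packing number $M_d(B(0,7r_1/4),B(0,r_1/4))\le 7^d$.
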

\begin{proof}
If $r_1 = 0$, the statement is trivial. Assume that $r_1 > 0$. Note that each vertex $u\in B(v,r_1)\cap V(G)$ satisfies $|u - t(u)|\le r_1/2$. Since $u$ is in $B(v,r_1)$, $t(u)$ is in $B(v,3r_1/2)$. Moreover, a token $t_2$ coloured later than a token $t_1$ in the algorithm must be at a distance more than $r_1/2$ from $t_1$ since otherwise the vertex $t_2$ would itself have $t_1$ as a token. Hence, for each pair of tokens $t_1$ and $t_2$, the balls $B(t_1,r_1/4)$ and $B(t_2,r_1/4)$ do not intersect. We conclude that the number of tokens that may fit into $B(v,3r_1/2)$ is at most $M_d(B(0,7r_1/4), B(0,r_1/4))\le 7^d$ and therefore the ball $B(v, r_1)$ contains points in at most $7^d$ different colours.
\end{proof}

We also make use of the following result. 

\begin{theorem}[simplified version of Theorems 1.1 and 1.2 in \cite{vgcover}, see also \cite{rcover}]\label{cover} 
Fix any $d\in \mathbb N$, $r, R > 0$ such that $R\ge r$, and let $T = R/r$. Let $\nu_{T,d}$ be the minimal number of closed balls of radius $r$ which may cover a closed ball of radius $R$ in $\mathbb{R}^d$. Then,
\begin{equation*}
    1\le \nu_{T,d}\le P(d) T^d,
\end{equation*}
where $P(d)$ is a polynomial function, which may be chosen of degree $3$.\qed
\end{theorem}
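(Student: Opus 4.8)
The lower bound $\nu_{T,d}\ge 1$ is immediate, so the whole content is the upper bound, and more precisely the fact that the multiplicative constant in front of $T^d$ is only \emph{polynomial} in $d$. It is worth first recording why the obvious approach falls short: taking a maximal subset $P\subseteq B(0,R)$ of points pairwise at distance $\ge r$, the closed balls $\{B(p,r):p\in P\}$ cover $B(0,R)$ by maximality, while the balls $\{B(p,r/2):p\in P\}$ are disjoint and lie in $B(0,R+r/2)$, so $|P|\le \big((2R+r)/r\big)^{d}=(2T+1)^d$. This already gives $\nu_{T,d}\le (2T+1)^d$, but the prefactor $2^d$ is exponential in $d$, whereas we are after $P(d)\,T^d$ with $P$ of degree $3$. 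The gap between $(2T+1)^d$ and $T^d$ is precisely the packing--covering discrepancy, and closing it forces the use of an \emph{economical} (Rogers-type) covering rather than a packing.

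The plan is therefore to build the covering layer by layer in the radial direction, invoking on each layer an economical covering of a sphere by spherical caps. First I would cover $B(0,r)$ by the single ball $B(0,r)$. Then, for $\rho>r$, I would cover the sphere $\partial B(0,\rho)$ by balls of radius $r$: a ball of radius $r$ whose centre sits at distance $\sqrt{\rho^2-r^2}$ from the origin cuts on $\partial B(0,\rho)$ a spherical cap of angular radius $\arcsin(r/\rho)$, and this choice of centre maximises the cap. By a Rogers-type economical covering of the sphere (in the spirit of~\cite{rcover}), $\partial B(0,\rho)$ can be covered by caps of angular radius $\alpha$ using at most $Q(d)\,(\sin\alpha)^{-(d-1)}$ of them, where $Q(d)=O(d^{3/2}\log d)$ is the (polynomial) spherical covering density. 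With $\sin\alpha=r/\rho$ this gives at most $Q(d)\,(\rho/r)^{d-1}$ balls per layer. Since each such ball covers, near the axis of its cap, a solid radial slab of thickness of order $r$, choosing radii $\rho_k\asymp k\,r$ for $k=1,\dots,\lceil T\rceil$ produces $O(T)$ layers whose union covers the solid annulus $\{r\le |x|\le R\}$. Summing,
\begin{equation*}
\nu_{T,d}\ \le\ 1+\sum_{k=1}^{\lceil T\rceil} Q(d)\Big(\tfrac{\rho_k}{r}\Big)^{d-1}\ \le\ 1+Q(d)\sum_{k=1}^{\lceil T\rceil}(ck)^{d-1}\ \le\ P(d)\,T^{d},
\end{equation*}
because $\sum_{k\le T}k^{d-1}$ is of order $T^{d}/d$; after multiplying $Q(d)$ by the polynomial factors coming from this radial summation one stays comfortably within degree $3$, which is where the claimed degree originates.

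This estimate degrades gracefully as $T\to 1^{+}$: then only the outermost layer survives, contributing at most $Q(d)\,(R/r)^{d-1}\le Q(d)\,T^{d-1}\le P(d)\,T^{d}$, with no exponential blow-up. For the opposite, bulk regime $T\ge d$ there is a cleaner alternative: start from a Rogers covering of $\mathbb R^d$ by balls of radius $r$ of density $\theta_d=O(d\log d)$, translate it by a uniformly random vector over a fundamental domain, and retain only the balls meeting $B(0,R)$; averaging shows that some translate uses at most $\theta_d\,\mathrm{vol}(B(0,R+r))/\mathrm{vol}(B(0,r))=\theta_d\,(T+1)^{d}$ such balls, and for $T\ge d$ one has $(1+1/T)^{d}\le e$, whence $\nu_{T,d}\le e\,\theta_d\,T^{d}$. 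This averaging bound is, however, genuinely lossy for small $T$ (the factor $(T+1)^d$ hides a $2^d$ at $T=1$), which is exactly why the layered spherical-cap construction is needed to cover the whole range $T\ge 1$.

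The main obstacle, in either route, is securing the \emph{polynomial-in-$d$} prefactor: any argument based on packings or on crude volume comparison inevitably loses a factor $c^{d}$, and the only way to reach $P(d)\,T^d$ is to feed in an economical covering whose density grows only polynomially in the dimension (Rogers for $\mathbb R^d$, or its spherical analogue for the caps), which is precisely the deep input behind Theorems~1.1 and~1.2 of~\cite{vgcover}. The remaining technical care goes into checking that the $O(T)$ spherical layers jointly cover the \emph{solid} ball — the radial coverage of a cap is thick at its axis but thin near its boundary, so neighbouring caps in the economical cover must be verified to fill these gaps — and into the boundary regime $T\to 1^{+}$, where one must confirm that a single layer of near-hemispherical caps suffices.
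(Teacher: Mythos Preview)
The paper does not give its own proof of this statement: the theorem is quoted from the cited references and closed with a \qed{} immediately after the display, with no argument supplied. So there is nothing in the paper to compare your proposal against.

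For what it is worth, your sketch is in the spirit of the cited source: a radial decomposition into $O(T)$ spherical shells, each covered by an economical (Rogers-type) cap covering of density polynomial in $d$, then summed to $P(d)\,T^{d}$. You have correctly isolated the key input --- a covering of $\mathbb{S}^{d-1}$ (or of $\mathbb{R}^d$) whose density grows only polynomially in $d$, which is exactly what the naive packing bound $(2T+1)^d$ lacks --- and you flag the genuine technical point that remains, namely that the $O(T)$ layers of caps must be shown to cover the \emph{solid} annulus and not just the spheres $\partial B(0,\rho_k)$. As written this is a proof outline rather than a proof (the radial bookkeeping and the exact degree count are not carried out), but the architecture matches what one finds in the literature the paper defers to.
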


\begin{proof}[Proof of upper bound of Theorem \ref{thm 2}]
Consider $r_1\ge 0$ and $r_2 > 0$ satisfying $r_2\ge r_1$. Consider an $(r_1,r_2)$-annulus graph $G$ together with the (proper) colouring $c_G$ given by the colouring algorithm described above. Let $k = k(G)$ be the largest colour in $c_G$ and $t_k$ be a token with colour $k$. Note that the vertices which may forbid the colours in $[k-1]$ for $t_k$ are at distance at most $r_2$ from $B(t_k, r_1/2)$. Hence, all such vertices are contained in the lower half of $B(t_k, r_1/2 + r_2)$. 

Since $r_1 \leq r_2$, by Theorem \ref{cover} the ball $B(t_k,r_1/2+r_2)$ may be covered by at most $\nu_{T,d}\le (3+o(1))^d$ balls of radius $r_2/2$ where $T=2+r_1/r_2\le 3$. By the pigeonhole principle one of these balls of radius $r_2/2$, say $B$, contains vertices in at least $k/\nu_{T,d}$ colours of $c_G$. By Lemma~\ref{lemcol} one may find a set $S\subseteq B\cap V(G)$ of at least $k/(\nu_{T,d}\, 7^d)$ vertices in $B$ which are pairwise at distance more than $r_1$. Thus, the distance between each pair of vertices in $S$ is between $r_1$ and $r_2$, and hence $S$ is a clique. Therefore,
\begin{equation} \label{chi-bound}
\frac{k}{\nu_{T,d}\, 7^d}\leq \omega(G).
\end{equation}
Hence, $\chi(G)\le k\le (21+o_d(1))^d \omega(G)$, where $o_d(1)$ signifies a quantity that goes to $0$ as $d$ goes to $\infty$. Thus we have
\begin{equation*}
M := \sup_{d\in \mathbb N}\; \sup_{r_1, r_2} \sup_{G\in \mathcal A_d(r_1, r_2)} \left(\dfrac{\chi(G)}{\omega(G)}\right)^{1/d} < +\infty,
\end{equation*}
which proves the theorem. Note that here we proved the result also for unit disc graphs.
\end{proof}

\subsection{Proof of the lower bound}\label{sec LB}

Before providing the proof of the lower bound of Theorem~\ref{thm 2} in full generality, we show the case $d = 1$ as a warm-up. We recall that when $r_1 > 0$, we set $x=r_2/r_1$, and in particular $\mathcal A_d(r_1,r_2)=\mathcal A_d(1,x)$.

\begin{proof}[Proof of the lower bound of Theorem~\ref{thm 2} for $d = 1$]
Fix $x > 1$. We will show that $\mathcal A_1(1,x)$ contains a triangle-free graph with an odd cycle, which has chromatic number at least 3 and clique number 2. We consider two cases:
\begin{itemize}
    \item if $x\ge 2$, then the following five points form an embedding of the 5--cycle with vertices, listed in consecutive order: $0,x,2x,x+0.99,x-0.99$.
    \item if $x < 2$, then let $k\ge 2$ be the smallest integer such that $kx\ge k+1$. The following $2k+1$ points form an embedding of a graph containing a $(2k+1)$-cycle with vertices, listed in consecutive order: 
    \begin{equation*}
    0,x,\dots,kx,(k-k(k+1)^{-1}) x,(k-2k(k+1)^{-1}) x,\dots,(k-k^2(k+1)^{-1}) x.
    \end{equation*}
    Clearly all pairs of consecutive points as well as the first and the last point form edges. At the same time, every graph in $\mathcal A_1(1,x)$ is triangle-free since the sum of two numbers in the interval $[1,x]$ is at least $2 > x$.
\end{itemize}
This completes the proof in the case $d=1$.
\end{proof}

Fix any $d\ge 2$ and $x\in [1.2,+\infty)$. We first provide a proof of the lower bound for the family $\mathcal A_d(1,x)$ and then come back to the case of unit disc graphs (that is, $\mathcal A_d(0,1)$). The proof of the lower bound relies on a construction that approximates the infinite uncountable $(2/x,2)$-annulus graph with vertex set $\mathbb S^{d-1}$. Our main goal is to provide an example of an annulus graph in $\mathcal A_d(1,x)$ with a large (multiplicative) gap between its chromatic number and its clique number based on discrete versions of the following two theorems.

For any $d\ge 2$ and a set $X\subseteq \mathbb S^{d-1}$, the \emph{spherical diameter of $X$}, denoted $\phi_X$, is the supremum over all pairs of points $x,y\in X$ of the spherical distance between $x$ and $y$. The next theorem due to Schmidt~\cite{Sch48, Sch49} relates the diameters of two compact sets $X_1, X_2$ with the maximum distance between a point in $X_1$ and a point in $X_2$.

\begin{theorem}[isodiametric inequality, see Chapter~II, Section~8 in~\cite{Sch48}]\label{thm Schmidt}
Let $E$ be a metric space among $\mathbb S^{d-1}, \mathbb R^{d-1}$ and the $(d-1)$-dimensional hyperbolic space. Fix two compact sets $X_1, X_2\subseteq E$ and define $D$ as the supremum of the distance between $x_1$ and $x_2$ over all pairs of points $x_1\in X_1, x_2\in X_2$. Also, let $d_1$ (respectively $d_2$) be the diameter of a ball in the same space having volume equal to the volume of $X_1$ (respectively of $X_2$). Then, $d_1+d_2\le 2D$.
\qed
\end{theorem}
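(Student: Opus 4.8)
The plan is to prove this two-set isodiametric inequality by \emph{two-point symmetrization} (polarization), applied simultaneously to $X_1$ and $X_2$. This technique works uniformly across the three spaces $\mathbb{S}^{d-1}$, $\mathbb{R}^{d-1}$ and hyperbolic space because each is a two-point homogeneous space in which the reflection $\sigma_H$ across a totally geodesic hypersurface $H$ is an isometry, hence volume-preserving. For a half-space $H^+$ bounded by $H$, the polarization $T_H(A)$ is obtained by inspecting each reflection orbit $\{p,\sigma_H(p)\}$ with $p\in H^+$: if $A$ meets the orbit in exactly one point we move that point to $H^+$, and otherwise we leave the orbit untouched. Since $\sigma_H$ preserves volume and $T_H$ only relocates mass inside orbits, $\mathrm{vol}(T_H A)=\mathrm{vol}(A)$; in particular the comparison diameters $d_1,d_2$ are invariant under polarization.

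The crucial monotonicity step is that simultaneous polarization does not increase the cross-diameter: writing $D(Y_1,Y_2)=\sup\{\rho(y_1,y_2):y_i\in Y_i\}$ for the underlying distance $\rho$, one has $D(T_H X_1,T_H X_2)\le D(X_1,X_2)$. This rests on the elementary polarization inequalities for $\sigma=\sigma_H$: if two points lie on the same side of $H$ then reflecting one of them across $H$ does not decrease their distance, whereas if they lie on opposite sides such a reflection does not increase it. Given $u\in T_H X_1$ and $v\in T_H X_2$ realising the new cross-distance, a short case analysis on which of $u,\sigma u$ and $v,\sigma v$ belong to the original sets, combined with these inequalities, produces preimages $u^{\ast}\in X_1$ and $v^{\ast}\in X_2$ with $\rho(u^{\ast},v^{\ast})\ge\rho(u,v)$; this is where most of the genuine work lies.

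With volume-preservation and cross-diameter monotonicity in hand, I would invoke the convergence theory for iterated polarizations (in the spirit of Brock--Solynin): fixing a reference point $O$ and a suitable dense sequence of reflecting hypersurfaces $H_1,H_2,\ldots$, the iterates $T_{H_n}\cdots T_{H_1}(X_i)$ converge, in the Hausdorff sense after passing to a subsequence via the Blaschke selection principle, to the metric ball $B_i$ centred at $O$ whose volume equals $\mathrm{vol}(X_i)$, hence of diameter $d_i$. Because exactly the same symmetrizations are applied to $X_1$ and $X_2$, the two limits are concentric balls about $O$. For two concentric balls of radii $d_1/2$ and $d_2/2$ the cross-diameter is attained by taking points in opposite directions from $O$ and equals $d_1/2+d_2/2$. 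Since $D$ never increases along the symmetrization and can only drop further under the Hausdorff limit, this yields $\tfrac{d_1+d_2}{2}=D(B_1,B_2)\le D(X_1,X_2)=D$, which is precisely $d_1+d_2\le 2D$.

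The main obstacle is twofold. First, verifying cross-diameter monotonicity under simultaneous polarization requires a careful orbit-by-orbit case analysis and is the geometric heart of the argument. Second, the convergence of iterated polarizations to the centred ball, though classical, must be set up carefully—using a dense (or random) family of hypersurfaces adapted to the fixed point $O$ together with compactness from Blaschke selection—to guarantee that both sets converge to \emph{concentric} balls and that the limiting configuration actually realises the cross-diameter $\tfrac{d_1+d_2}{2}$. Once these two ingredients are secured, the final computation for concentric balls is immediate and identical in all three geometries.
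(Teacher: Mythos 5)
The paper offers no proof of Theorem~\ref{thm Schmidt} at all: it is imported verbatim from Schmidt~\cite{Sch48} (hence the tombstone at the end of its statement), so your proposal can only be measured against the literature, not against an argument in the paper. Your overall strategy is a legitimate one, and its geometric heart is sound: simultaneous polarization preserves the volumes (hence the comparison diameters $d_1,d_2$), and your cross-diameter monotonicity $D(T_H X_1,T_H X_2)\le D(X_1,X_2)$ is indeed provable by exactly the orbit case analysis you describe, using the reflection inequality $\rho(u,\sigma v)\ge \rho(u,v)$ for $u,v$ on the same side of $H$ (valid in all three space forms, since the geodesic from $u$ to $\sigma v$ must cross $H$ at a point $w$ fixed by $\sigma$, and then $\rho(u,\sigma v)=\rho(u,w)+\rho(w,v)\ge\rho(u,v)$).

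The genuine gap is in your limiting step. Blaschke selection does give a Hausdorff-convergent subsequence, but its limit need \emph{not} be the centred ball of volume $\mathrm{vol}(X_i)$: volume is only upper semicontinuous under Hausdorff convergence, and polarization iterates of a compact set can carry measure-zero ``whiskers'' (e.g.\ an isolated point) that persist in the Hausdorff limit, so the limit may strictly contain a ball without being one. What the classical convergence theory (Brock--Solynin, Van Schaftingen's universal sequences, Baernstein--Taylor on the sphere) actually provides is convergence \emph{in measure} to the centred ball $B_i$, along a single sequence of polarizations that works for both sets simultaneously; your Hausdorff claim, as stated, would fail. The repair is easy and makes Blaschke selection unnecessary: with $X_i^{(n)}\to B_i$ in measure, pick interior points $u\in B_1$ and $v\in B_2$ in nearly opposite directions from $O$; small balls around $u$ and $v$ lie in $B_1$, $B_2$ and hence eventually meet $X_1^{(n)}$, $X_2^{(n)}$, giving $\liminf_n D(X_1^{(n)},X_2^{(n)})\ge D(B_1,B_2)$, and monotonicity then yields $D\ge D(B_1,B_2)$. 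A second, smaller imprecision: on $\mathbb S^{d-1}$ your closing identity $D(B_1,B_2)=d_1/2+d_2/2$ is false for caps larger than hemispheres, where $D(B_1,B_2)=\min(r_1+r_2,\pi)$ while $d_i=\min(2r_i,\pi)$; the inequality $\tfrac{1}{2}(d_1+d_2)\le D(B_1,B_2)$ that you actually need does survive, but only after this case check, which should be written out. With these two repairs (and precise citations for the universal polarization sequence in each of the three geometries) your plan does yield the theorem.
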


We will use the following corollary of Theorem~\ref{thm Schmidt} that was proved independently by B\"or\"oczky and Sagmeister, see Theorem~1.2 in~\cite{BS19}.

\begin{corollary}\label{thm sph cap}
Let $X$ be a measurable subset of $\mathbb S^{d-1}$ with spherical diameter $\phi_X < \pi$. Then, the Lebesgue measure of (the closure of) $X$ is at most the volume of a spherical cap with spherical diameter $\phi_X$.\qed
\end{corollary}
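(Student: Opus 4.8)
The plan is to deduce the corollary directly from the isodiametric inequality of Theorem~\ref{thm Schmidt}, applied in the metric space $E = \mathbb{S}^{d-1}$ with the single choice $X_1 = X_2 = \overline{X}$, the closure of $X$. First I would record the two facts that make this substitution legitimate: $\overline{X}$ is compact, being a closed subset of the compact sphere, and its spherical diameter coincides with $\phi_X$. The latter holds because the spherical distance is continuous and $X$ is dense in $\overline{X}$, so the supremum of pairwise distances over $\overline{X}$ equals the supremum over $X$. In particular the quantity $D$ appearing in Theorem~\ref{thm Schmidt} equals $\phi_X < \pi$ for this choice of $X_1$ and $X_2$.

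Next, let $C$ denote a spherical cap (that is, a metric ball of $\mathbb{S}^{d-1}$) whose Lebesgue measure equals that of $\overline{X}$, and write $\delta$ for its spherical diameter; since $X_1 = X_2 = \overline{X}$, the two quantities $d_1, d_2$ of Theorem~\ref{thm Schmidt} both equal $\delta$. The inequality $d_1 + d_2 \le 2D$ then reads $2\delta \le 2\phi_X$, that is, $\delta \le \phi_X$. Because $\phi_X < \pi$, this forces $\delta < \pi$, so the cap $C$ has angular radius $\delta/2 < \pi/2$. Here I use the elementary description of the diameter of a cap: a cap of angular radius $\theta$ has spherical diameter $2\theta$ as long as $\theta \le \pi/2$, and diameter $\pi$ once $\theta \ge \pi/2$, so a diameter strictly below $\pi$ pins us to the regime $\theta < \pi/2$.

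Finally, I would invoke the fact that the Lebesgue measure of a spherical cap is a strictly increasing function of its angular radius on $[0,\pi]$, equivalently a strictly increasing function of its spherical diameter as long as that diameter stays below $\pi$. Comparing the cap $C$ of diameter $\delta$ with the cap $C'$ of spherical diameter exactly $\phi_X$ (angular radius $\phi_X/2 < \pi/2$), the inequality $\delta \le \phi_X$ yields $\mathrm{vol}(\overline{X}) = \mathrm{vol}(C) \le \mathrm{vol}(C')$, which is precisely the asserted bound.

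The argument is short, and the only points requiring genuine care are bookkeeping rather than substance: one must interpret the ``diameter of a ball'' in Theorem~\ref{thm Schmidt} as the spherical (metric) diameter of a cap, confirm that passing to the closure alters neither the diameter nor the measure that the inequality controls, and verify the monotonicity of cap volume in its diameter precisely on the range $[0,\pi)$ where the hypothesis $\phi_X < \pi$ keeps us. I do not expect any substantial obstacle beyond these routine verifications; the entire content of the corollary is the specialization $X_1 = X_2$ of the two-set inequality, which collapses it to the classical single-set isodiametric statement on the sphere.
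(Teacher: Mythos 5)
Your proposal is correct and takes essentially the approach the paper intends: the paper states this result without a written proof, presenting it as a corollary of Theorem~\ref{thm Schmidt} and citing B\"or\"oczky--Sagmeister for an independent proof, and your specialization $X_1 = X_2 = \overline{X}$ (with $D = \phi_X$, $d_1 = d_2 = \delta$) combined with the monotonicity of cap volume in the spherical diameter on $[0,\pi)$ is precisely the deduction being alluded to. The care you take with the closure, with the diameter-versus-angular-radius bookkeeping, and with ruling out caps of angular radius at least $\pi/2$ via $\delta \le \phi_X < \pi$ is exactly what is needed to make the citation-level claim rigorous.
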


In the sequel, we denote by $\mathrm{Cap}^{d-1}(\phi)$ the spherical cap in $\mathbb S^{d-1}$ with center $(1,0,\ldots,0)$ and diameter $\phi$, and define $M(d,\phi)$ to be the maximum number of disjoint copies of $\mathrm{Cap}^{d-1}(\phi)$ that can be packed in $\mathbb S^{d-1}$, that is, $M(d,\phi) = M_d(\mathbb S^{d-1}, \mathrm{Cap}^{d-1}(\phi))$. The next theorem from~\cite{KL78} provides an upper bound on $M(d,\phi)$.

\begin{theorem}[\cite{KL78}, see also Section 2.4 in~\cite{TK93}]\label{thm proportion packing}
For all $\phi\in [0,\pi)$,
\begin{equation*}
    \dfrac{1}{d} \ln M(d,\phi)\le \dfrac{1+\sin \phi}{2\sin \phi} \ln\left(\dfrac{1+\sin \phi}{2\sin \phi}\right) - \dfrac{1-\sin \phi}{2\sin \phi} \ln\left(\dfrac{1-\sin \phi}{2\sin \phi}\right) +o_{d}(1).
\end{equation*}\qed
\end{theorem}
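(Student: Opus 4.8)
The plan is to establish the bound via the linear programming (Delsarte) method for spherical codes, refined by the geometric cap-restriction argument of Kabatiansky and Levenshtein, and closed off by an asymptotic analysis of the resulting extremal polynomial. First I would reformulate the quantity. Two copies of $\mathrm{Cap}^{d-1}(\phi)$ are disjoint precisely when the angular distance between their centres exceeds $\phi$; hence $M(d,\phi)$ equals, up to boundary effects irrelevant to the exponential order, the maximum cardinality of a finite set $C\subseteq\mathbb S^{d-1}$ whose points are pairwise at angular distance at least $\phi$, i.e.\ with $\langle x,y\rangle\le\cos\phi$ for all distinct $x,y\in C$. It thus suffices to bound such a spherical code. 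Write $c=\cos\phi$ and let $C_k^{(d)}$ denote the Gegenbauer (ultraspherical) polynomials attached to $\mathbb S^{d-1}$, normalised so that $C_k^{(d)}(1)=1$.

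The second step sets up the LP bound. The crucial input is the positive-definiteness of the zonal functions: for every finite $C\subseteq\mathbb S^{d-1}$ and every $k\ge0$ one has $\sum_{x,y\in C}C_k^{(d)}(\langle x,y\rangle)\ge0$, which is the addition theorem for spherical harmonics. Consequently, if $f(t)=\sum_{k\ge0}f_k\,C_k^{(d)}(t)$ satisfies $f_0>0$, $f_k\ge0$ for all $k\ge1$, and $f(t)\le0$ for all $t\in[-1,c]$, then expanding $\sum_{x,y\in C}f(\langle x,y\rangle)$ in two ways yields the bound: by the spectral expansion the sum is at least $f_0\,|C|^2$, while the diagonal contributes $|C|\,f(1)$ and the off-diagonal terms are $\le0$, so $f_0|C|^2\le|C|f(1)$ and therefore $|C|\le f(1)/f_0$. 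The problem is thereby reduced to exhibiting a good feasible polynomial $f$.

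The third step is the choice of polynomial. I would follow Levenshtein and take $f$ built from the square of a Jacobi polynomial $P_k^{(\alpha,\beta)}$ with $\alpha=\beta=(d-3)/2$, multiplied by a linear factor $(t-s)$ with $s$ tuned to the largest zero of the relevant Jacobi polynomial; this simultaneously secures the sign condition on $[-1,c]$ and the nonnegativity of the Gegenbauer coefficients. To reach the sharp entropy exponent in the small-$\phi$ regime I would additionally invoke the Kabatiansky--Levenshtein refinement: restricting $C$ to a spherical cap of half-angle $\psi$ and projecting radially onto the bounding $\mathbb S^{d-2}$ produces a code whose minimum angle $\theta'$ satisfies $\cos\theta'=(\cos\phi-\cos^2\psi)/\sin^2\psi$, which is strictly larger than $\phi$. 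Averaging such cap counts over the sphere and optimising the cap radius $\psi$ transfers the estimate to the near-orthogonal regime, where the LP bound is cleanest, and it is exactly this optimisation that surfaces the quantities $\tfrac{1\pm\sin\phi}{2\sin\phi}$.

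Finally I would pass to the asymptotics. With the parameters of the Jacobi polynomial scaling linearly in $d$ (degree of order $d$, with $\alpha$ of order $d/2$), one computes $\tfrac1d\ln(f(1)/f_0)$ as $d\to\infty$ and optimises over the remaining free parameter; after simplification this collapses to the stated expression
\begin{equation*}
\frac{1+\sin\phi}{2\sin\phi}\ln\frac{1+\sin\phi}{2\sin\phi}-\frac{1-\sin\phi}{2\sin\phi}\ln\frac{1-\sin\phi}{2\sin\phi},
\end{equation*}
up to the additive $o_d(1)$ term. I expect the main obstacle to be precisely this asymptotic step: controlling the location of the extreme zero of $P_k^{(\alpha,\beta)}$ when $\alpha,k\asymp d$ — the regime governed by the Plancherel--Rotach (edge) asymptotics — with enough uniformity to pin down the limiting exponent, and checking that the cap-optimisation of the third step selects the value of $\psi$ at which the two competing estimates balance to yield the $\sin\phi$-entropy. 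By contrast, the reduction in the first step and the sign and coefficient-positivity verifications for $f$ are comparatively routine.
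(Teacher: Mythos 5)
The paper never proves this theorem: it is imported as a black box from \cite{KL78} (with a pointer to Section~2.4 of \cite{TK93}), so the only meaningful comparison is with Kabatiansky--Levenshtein's original argument. Your outline --- identifying cap packings with spherical codes of minimum angle $\phi$, the Delsarte/Schoenberg positive-definiteness and the LP bound $|C|\le f(1)/f_0$, Levenshtein's Jacobi-polynomial test functions, the cap-projection recursion $\cos\theta' = (\cos\phi-\cos^2\psi)/\sin^2\psi$ transferring to the near-orthogonal regime, and the Plancherel--Rotach asymptotics for the extreme Jacobi zeros --- is precisely that argument, so your proposal reconstructs (at sketch level, with the analytic heavy lifting correctly located) exactly the proof the paper relies on.
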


Now, we outline the idea of the proof. Fix $x\in [1.2,+\infty)$. We will provide a graph $G\in \mathcal A_d(2/x,2)$ for which $\chi(G)/\omega(G)\geq m^d$ for some constant $m>1$ independent of $d$ and $x$. For a graph $G$, we denote by $|G|$ the number of its vertices and by $\alpha(G)$ the size of its maximum independent set, that is, the size of the largest set of vertices inducing no edge. Recall that, for every non-empty graph $G$, $\chi(G)\geq |G|/\alpha(G)$. Therefore, we will provide a graph $G$ satisfying $|G|/(\alpha(G)\omega(G))\ge m^d$ and conclude by the previous observation. 

We start by giving an upper bound on the clique number of any annulus graph on the sphere $\mathbb S^{d-1}$. Then, we find a set of points in $\mathbb S^{d-1}$ forming an annulus graph $G_d$ with $|G_d|/\alpha(G_d)$ suitably bounded from below. 

\begin{lemma}
Consider a graph $G$ with an $(2/x, 2)$-embedding in the sphere $\mathbb S^{d-1}$. Then, its clique number is at most $M(d,2\arcsin(x^{-1}))$. 
\end{lemma}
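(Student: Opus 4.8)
The plan is to bound the clique number of any graph $G$ with a $(2/x,2)$-annulus embedding on $\mathbb{S}^{d-1}$ by translating a clique into a spherical cap packing, and then invoke the packing bound $M(d,\phi)$. First I would take a maximum clique $K\subseteq V(G)$ and observe that, for any two vertices $u,v\in K$, their images on $\mathbb{S}^{d-1}$ are at Euclidean distance in $[2/x,2]$. The key geometric step is to convert this lower bound on the pairwise Euclidean distance into a lower bound on the pairwise \emph{spherical} distance: for points on the unit sphere, Euclidean distance $\ell$ between $u$ and $v$ corresponds to a spherical (angular) distance $\theta$ via $\ell = 2\sin(\theta/2)$. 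Hence the condition $|u-v|\ge 2/x$ becomes $2\sin(\theta_{uv}/2)\ge 2/x$, i.e. the angular distance $\theta_{uv}$ between any two clique vertices is at least $2\arcsin(x^{-1})$.

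With this in hand, the strategy is a standard packing argument. Around each clique vertex I would place a spherical cap of spherical \emph{radius} $\arcsin(x^{-1})$ — equivalently a cap of spherical diameter $2\arcsin(x^{-1})$, which is precisely $\mathrm{Cap}^{d-1}(2\arcsin(x^{-1}))$ in the paper's notation. Since every pair of clique vertices is at angular distance at least $2\arcsin(x^{-1})$, the open caps of spherical radius $\arcsin(x^{-1})$ centred at the clique vertices have pairwise disjoint interiors: any point in the intersection of two such caps would force the two centres to be within angular distance $2\arcsin(x^{-1})$ of each other, a contradiction. Each such cap is a congruent copy of $\mathrm{Cap}^{d-1}(2\arcsin(x^{-1}))$, so we obtain a packing of $\mathbb{S}^{d-1}$ with $|K|=\omega(G)$ disjoint copies of this cap. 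By the definition of $M(d,\phi)=M_d(\mathbb{S}^{d-1},\mathrm{Cap}^{d-1}(\phi))$ as the maximum size of such a packing, it follows that $\omega(G)\le M(d,2\arcsin(x^{-1}))$, as claimed.

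I expect the main technical point — really the only genuine obstacle — to be the careful handling of the cap radius versus diameter and of open versus closed caps, so that the disjointness is exact and each region is a genuine congruent copy of $\mathrm{Cap}^{d-1}(2\arcsin(x^{-1}))$. Concretely, one must be sure that using caps of spherical radius $\arcsin(x^{-1})$ (half the minimum pairwise angular distance) yields disjoint interiors, and that this cap is congruent to the cap of \emph{diameter} $2\arcsin(x^{-1})$ whose packing number is counted by $M(d,\cdot)$. A minor subtlety is ensuring $2\arcsin(x^{-1})<\pi$ so that the caps are proper and the packing quantity $M(d,\phi)$ is well defined; since $x\ge 1.2>1$ we have $\arcsin(x^{-1})<\pi/2$, so this holds comfortably. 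Everything else is routine once the Euclidean-to-spherical distance conversion is in place.
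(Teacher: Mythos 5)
Your proof is correct and follows essentially the same packing argument as the paper: a clique gives points at pairwise Euclidean distance at least $2/x$, hence pairwise angular distance at least $2\arcsin(x^{-1})$, hence disjoint spherical caps and the bound $M(d,2\arcsin(x^{-1}))$. If anything, your explicit Euclidean-to-angular conversion via $\ell = 2\sin(\theta/2)$ is more careful than the paper's wording, which invokes disjoint \emph{Euclidean} balls of radius $1/x$ and thus, read literally, produces caps of angular radius $2\arcsin(1/(2x)) < \arcsin(x^{-1})$, i.e.\ a formally weaker packing bound; your version is the one that matches the stated conclusion exactly.
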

\begin{proof}
Since the diameter of $\mathbb S^{d-1}$ is $2$, a pair of vertices of $G$ are connected by an edge if and only if they are at a distance at least $2/x$. Thus, a clique in $G$ is a set of vertices which are pairwise at Euclidean distance at least $2/x$, and in particular the open balls with radii $1/x$ around these points are disjoint. Hence, the largest clique in such a graph has size at most $M(d,2\arcsin(x^{-1}))$.
\end{proof}

For every integer $d\ge 2$, we define $\mathrm{svol}_{d-1}(X)$ as the $(d-1)$-dimensional Lebesgue measure on $\mathbb S^{d-1}$. The next lemma provides an upper bound for $M(d,2\arcsin(x^{-1}))$.

\begin{lemma}\label{lem analysis}
For all $x\in [1.2, +\infty)$, we have $M(d,2\arcsin(x^{-1}))\leq\dfrac{(0.997+o_d(1))^d\,\mathrm{svol}_{d-1}(\mathbb S^{d-1})}{\mathrm{svol}_{d-1}(\mathrm{Cap}^{d-1}(\arcsin(x^{-1})))}$.
\end{lemma}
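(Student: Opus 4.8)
The plan is to combine the Kabatiansky--Levenshtein estimate (Theorem~\ref{thm proportion packing}) with Laplace-type asymptotics for the volume of a spherical cap, thereby reducing the statement to a single inequality in the one real variable $x$. Write $\phi=2\arcsin(x^{-1})$ for the (spherical) diameter of the caps counted by $M(d,\phi)$, so that
\begin{equation*}
\sin\phi=\sin\bigl(2\arcsin(x^{-1})\bigr)=\frac{2\sqrt{x^2-1}}{x^2}=:s(x).
\end{equation*}
Theorem~\ref{thm proportion packing} then gives $\tfrac1d\ln M(d,\phi)\le F(x)+o_d(1)$, where
\begin{equation*}
F(x)=\frac{1+s}{2s}\ln\frac{1+s}{2s}-\frac{1-s}{2s}\ln\frac{1-s}{2s},\qquad s=s(x).
\end{equation*}

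Next I would determine the exponential growth rate of the denominator. Let $\rho$ denote the angular radius of the cap appearing there, so that its measure is $\mathrm{svol}_{d-2}(\mathbb{S}^{d-2})\int_0^\rho\sin^{d-2}t\,dt$, while $\mathrm{svol}_{d-1}(\mathbb{S}^{d-1})=\mathrm{svol}_{d-2}(\mathbb{S}^{d-2})\int_0^\pi\sin^{d-2}t\,dt$. Since $\rho<\pi/2$ for every $x\ge 1.2$ and $\sin^{d-2}$ is increasing on $[0,\rho]$, Laplace's method gives $\tfrac1d\ln\int_0^\rho\sin^{d-2}t\,dt\to\ln\sin\rho$, whereas $\tfrac1d\ln\int_0^\pi\sin^{d-2}t\,dt\to 0$ because the integrand peaks at the value $1$; the common prefactor $\mathrm{svol}_{d-2}(\mathbb{S}^{d-2})$ cancels. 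Hence, as $d\to\infty$,
\begin{equation*}
\left(\frac{\mathrm{svol}_{d-1}(\mathrm{Cap}^{d-1}(\arcsin(x^{-1})))}{\mathrm{svol}_{d-1}(\mathbb{S}^{d-1})}\right)^{1/d}\longrightarrow \sin\rho=\frac{1}{x},
\end{equation*}
the last equality recording that the relevant angular radius equals $\arcsin(x^{-1})$.

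Putting the two pieces together, the lemma follows as soon as I verify the one-variable inequality $x^{-1}e^{F(x)}\le 0.997$ for all $x\in[1.2,+\infty)$, since the leftover $o_d(1)$ contributions are absorbed into the factor $(0.997+o_d(1))^d$. I would study $h(x):=x^{-1}e^{F(x)}$ directly: as $x\to\infty$ one has $s(x)\to 0$ and $F(x)=1-\ln(2s(x))+o(1)$, so $e^{F(x)}\sim\tfrac{e}{4}x$ and $h(x)\to e/4\approx 0.68$, comfortably below $0.997$; the decisive value is the left endpoint, where a direct computation gives $h(1.2)=e^{F(1.2)}/1.2\approx 1.195/1.2\approx 0.996$. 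Thus $h$ never exceeds $0.997$ on $[1.2,\infty)$, which is exactly the source of the constant in the statement.

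The main obstacle is this final verification, because the inequality is essentially tight at $x=1.2$: the margin there is only of order $10^{-3}$, so the endpoint value must be bounded using genuine estimates on $F$ rather than crude ones. One must also confirm that $h$ does not exceed its endpoint value elsewhere in $(1.2,\infty)$, and here some care is needed: $s(x)$ is not monotone (it increases up to the value $1$ at $x=\sqrt{2}$ and decreases thereafter), although $F$ is decreasing in $s$ on $(0,1)$. The clean way to finish is to split $[1.2,+\infty)$ into a compact interval, on which $h$ is controlled through the explicit formula for $F$, and an unbounded tail, on which the asymptotic $h(x)\to e/4<0.997$ suffices; a short monotonicity check shows $h$ decreases away from $x=1.2$ across the compact range, so the endpoint value is indeed the maximum.
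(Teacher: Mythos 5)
Your proposal is correct and follows essentially the same route as the paper's proof: both combine the Kabatiansky--Levenshtein bound (Theorem~\ref{thm proportion packing}) with the Laplace-method estimate $(\sin\theta+o_d(1))^d$ for the cap-volume ratio, and then reduce the lemma to the one-variable inequality that $\sin\theta\,e^{F}$ (your $h(x)$ under the substitution $\sin\theta=x^{-1}$) stays below $0.997$, with the maximum attained at the endpoint $x=1.2$. The only difference is presentational: the paper delegates the one-variable verification to a computer-algebra check, whereas you sketch the endpoint value and the tail asymptotic $h(x)\to e/4$ by hand (your monotonicity claim is slightly imprecise, since $h$ has a shallow local minimum past $x=\sqrt{2}$ and creeps back up toward $e/4$, but this does not affect the conclusion).
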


\begin{proof}
It is well-known (see e.g. page 67 in~\cite{Li11}) that for all $\theta\in (0,\pi/2]$,
\begin{equation}\label{eq cap volume}
    \dfrac{\mathrm{svol}_{d-1}(\mathrm{Cap}^{d-1}(\theta))}{\mathrm{svol}_{d-1}(\mathbb S^{d-1})} = \dfrac{\int_0^{\theta} (\sin t)^{d-2} dt}{\int_0^{\pi} (\sin t)^{d-2} dt} = (\sin \theta + o_d(1))^d.
\end{equation}

\noindent
Note that for the second equality in~\eqref{eq cap volume}, we used Laplace's method to approximate $\int_0^b(\sin t)^{d-2}dt$ by $(\sin x_0+o_d(1))^d$, where $x_0$ is the unique maximum of $\sin$ in $(0,b]\subseteq (0,\pi]$. Set $\theta = \arcsin(x^{-1})$. By Theorem~\ref{thm proportion packing}, we have
\begin{equation*}
(\sin\theta)^d M(d,2\theta) \le \left(\sin\theta\exp\left(\dfrac{1+\sin(2\theta)}{2\sin(2\theta)} \ln\left(\dfrac{1+\sin(2\theta)}{2\sin(2\theta)}\right) - \dfrac{1-\sin(2\theta)}{2\sin(2\theta)} \ln\left(\dfrac{1-\sin(2\theta)}{2\sin(2\theta)}\right)\right) +o_d(1)\right)^d.
\end{equation*}

\noindent
A study of the function
\begin{equation*}
\theta\in [0,\arcsin(1.2^{-1})]\mapsto \sin\theta\exp\left(\dfrac{1+\sin(2\theta)}{2\sin(2\theta)} \ln\left(\dfrac{1+\sin(2\theta)}{2\sin(2\theta)}\right) - \dfrac{1-\sin(2\theta)}{2\sin(2\theta)} \ln\left(\dfrac{1-\sin(2\theta)}{2\sin(2\theta)}\right)\right)
\end{equation*}
shows that its maximum is attained at $\theta = \arcsin(1.2^{-1})$ and this maximum is smaller than $0.997$.\footnote{The reader may verify our claim by following the link \url{https://www.wolframalpha.com/input?i=maximize+sin\%28x\%29*exp\%28\%28\%281\%2Bsin\%282x\%29\%29\%2F\%282*sin\%282x\%29\%29\%29*ln\%28\%281\%2Bsin\%282x\%29\%29\%2F\%282*sin\%282x\%29\%29\%29-\%28\%281-sin\%282x\%29\%29\%2F\%282*sin\%282x\%29\%29\%29*ln\%28\%281-sin\%282x\%29\%29\%2F\%282*sin\%282x\%29\%29\%29\%29+for+x+in+\%5B0\%2C+arcsin\%281.2\%5E\%28-1\%29\%29\%5D+}.} This shows that $\dfrac{\mathrm{svol}_{d-1}(\mathrm{Cap}^{d-1}(x^{-1}))\,M(d,2\arcsin(x^{-1}))}{\mathrm{svol}_{d-1}(\mathbb S^{d-1})}\leq (0.997+o_d(1))^d$, from which the lemma follows.
\end{proof}

A set $\mathcal N_{\varepsilon}\subseteq \mathbb S^{d-1}$ is called an $\varepsilon$-net if every point in $\mathbb S^{d-1}$ has a point in $\mathcal N_{\varepsilon}$ at spherical distance at most $\varepsilon$. 

\begin{lemma}\label{lem net}
For any sufficiently small real number $\delta = \delta(x, d) > 0$, there is a $(2/x,2)$-annulus graph $G$ satisfying
\begin{equation*}
    \dfrac{|G|}{\alpha(G)}\geq\dfrac{\mathrm{svol}_{d-1}(\mathbb S^{d-1})}{\mathrm{svol}_{d-1}(\mathrm{Cap}^{d-1}(\arcsin(x^{-1}) + \delta))}.
\end{equation*}
\end{lemma}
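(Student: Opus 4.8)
The plan is to construct $G$ explicitly by placing its vertices at the points of a maximal $\delta$-packing of $\mathbb S^{d-1}$ and declaring two vertices adjacent exactly when their Euclidean distance lies in $[2/x, 2]$, so that $G$ is by construction a $(2/x,2)$-annulus graph on the sphere. First I would fix a maximal set $\mathcal P \subseteq \mathbb S^{d-1}$ of points that are pairwise at spherical distance more than $2\delta$ (equivalently, an inclusion-maximal packing by spherical caps of radius $\delta$). By maximality, the caps of radius $2\delta$ centred at these points cover $\mathbb S^{d-1}$, so $\mathcal P$ is in particular a $2\delta$-net; this will control $|G| = |\mathcal P|$ from below. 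Taking $V(G) = \mathcal P$ with the annulus adjacency rule gives a finite graph, and the whole task reduces to bounding $|G|/\alpha(G)$ from below by the stated sphere-volume ratio.

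The key observation is that an independent set in $G$ is precisely a set of vertices that are pairwise \emph{not} at Euclidean distance in $[2/x,2]$; since the sphere has Euclidean diameter $2$, no pair can exceed distance $2$, so pairwise non-adjacency means all pairwise Euclidean distances are strictly less than $2/x$. Hence an independent set is a set of points on $\mathbb S^{d-1}$ of Euclidean diameter less than $2/x$, which by the chord-length relation translates to spherical diameter less than $2\arcsin(x^{-1})$. I would then invoke Corollary~\ref{thm sph cap} (the spherical isodiametric inequality): any set of spherical diameter $\phi < \pi$ has spherical volume at most that of a spherical cap of spherical diameter $\phi$. Applied to a maximum independent set $I$, whose points have spherical diameter at most $2\arcsin(x^{-1})$, this bounds $\mathrm{svol}_{d-1}$ of the $\delta$-neighbourhood of $I$.

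To convert volume into a count of points, I would use the packing structure. The caps of radius $\delta$ around the points of $I$ are pairwise disjoint (since $I \subseteq \mathcal P$ has pairwise spherical distances exceeding $2\delta$), and every such cap is contained in the $\delta$-neighbourhood of $I$. That neighbourhood is a set of spherical diameter at most $2\arcsin(x^{-1}) + 2\delta$, so by Corollary~\ref{thm sph cap} its volume is at most $\mathrm{svol}_{d-1}(\mathrm{Cap}^{d-1}(\arcsin(x^{-1}) + \delta))$. Since the $|I| = \alpha(G)$ disjoint $\delta$-caps fit inside this region, comparing volumes gives
\begin{equation*}
\alpha(G)\cdot \mathrm{svol}_{d-1}(\mathrm{Cap}^{d-1}(\delta)) \le \mathrm{svol}_{d-1}(\mathrm{Cap}^{d-1}(\arcsin(x^{-1}) + \delta)).
\end{equation*}
Dividing through, and using that $\mathcal P$ is a $2\delta$-net so the $2\delta$-caps around all of $\mathcal P$ cover the sphere and hence $|G|\cdot \mathrm{svol}_{d-1}(\mathrm{Cap}^{d-1}(2\delta)) \ge \mathrm{svol}_{d-1}(\mathbb S^{d-1})$, I would assemble the two estimates into the claimed lower bound on $|G|/\alpha(G)$.

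The main obstacle is getting the volume bookkeeping to land exactly on $\mathrm{svol}_{d-1}(\mathbb S^{d-1})/\mathrm{svol}_{d-1}(\mathrm{Cap}^{d-1}(\arcsin(x^{-1}) + \delta))$ rather than on a cruder ratio carrying extra $\delta$-dependent cap factors for $\mathrm{Cap}^{d-1}(\delta)$ and $\mathrm{Cap}^{d-1}(2\delta)$. The clean way is to compute $|G|/\alpha(G)$ directly: the net property gives $|G| \ge \mathrm{svol}_{d-1}(\mathbb S^{d-1})/\mathrm{svol}_{d-1}(\mathrm{Cap}^{d-1}(2\delta))$, the isodiametric argument gives $\alpha(G)\le \mathrm{svol}_{d-1}(\mathrm{Cap}^{d-1}(\arcsin(x^{-1})+\delta))/\mathrm{svol}_{d-1}(\mathrm{Cap}^{d-1}(\delta))$, and since the $\delta$ and $2\delta$ cap volumes agree up to a factor tending to $1$ as $\delta\to 0$, for sufficiently small $\delta$ the product of the small-cap discrepancies is absorbed. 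The care required is to ensure the diameter inflation in the independent-set neighbourhood is exactly $\delta$ (not $2\delta$), which is why the independent-set bound is stated with a single $\delta$; choosing the packing radius and net radius consistently, and verifying the limit $\delta \to 0$ cleanly swallows the lower-order cap factors, is the delicate part of the argument.
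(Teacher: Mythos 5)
Your reduction of independence to small spherical diameter and your appeal to Corollary~\ref{thm sph cap} match the paper's argument, but the point-counting step has a genuine, fatal gap: the claim that $\mathrm{svol}_{d-1}(\mathrm{Cap}^{d-1}(\delta))$ and $\mathrm{svol}_{d-1}(\mathrm{Cap}^{d-1}(2\delta))$ ``agree up to a factor tending to $1$ as $\delta\to 0$'' is false. For fixed $d$, a cap of angular radius $\delta$ has volume proportional to $\int_0^{\delta}(\sin t)^{d-2}\,dt\sim \delta^{d-1}/(d-1)$ as $\delta\to 0$, so the ratio $\mathrm{svol}_{d-1}(\mathrm{Cap}^{d-1}(2\delta))/\mathrm{svol}_{d-1}(\mathrm{Cap}^{d-1}(\delta))$ tends to $2^{d-1}$, not to $1$. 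Consequently your two estimates (covering for $|G|$, packing for $\alpha(G)$) combine only to
\begin{equation*}
\dfrac{|G|}{\alpha(G)}\;\ge\; \dfrac{\mathrm{svol}_{d-1}(\mathbb S^{d-1})}{\mathrm{svol}_{d-1}(\mathrm{Cap}^{d-1}(\arcsin(x^{-1})+\delta))}\cdot\dfrac{\mathrm{svol}_{d-1}(\mathrm{Cap}^{d-1}(\delta))}{\mathrm{svol}_{d-1}(\mathrm{Cap}^{d-1}(2\delta))},
\end{equation*}
which is the stated bound degraded by a factor of order $2^{-(d-1)}$. This loss cannot be absorbed by shrinking $\delta$, and it is fatal downstream: in the proof of Theorem~\ref{thm 2}~\eqref{pt 2} this lemma is played against Lemma~\ref{lem analysis}, whose constant $0.997$ yields the margin $1.003^d$; an extra $2^{-(d-1)}$ turns that into an exponentially small quantity. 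Nor is the loss an artifact of sloppy bookkeeping that a finer analysis of the same construction would remove: your lower bound on $|G|$ is a covering bound (caps of radius $2\delta$), your upper bound on $\alpha(G)$ is a packing bound (disjoint caps of radius $\delta$), and within your argument you cannot exclude that the net sits at packing density near the independent set but only at covering density globally, which is exactly where the factor $2^{d-1}$ lives.

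What the proof really needs is a vertex set of (near-)uniform density, so that counting vertices in a region is equivalent to measuring its volume \emph{in the same units} for both $\alpha(G)$ and $|G|$. The paper does this probabilistically: it tessellates $\mathbb S^{d-1}$ into cells of spherical diameter at most $\varepsilon$ and area at least $a$, samples a Poisson process of intensity $\lambda$ with $\lambda a\ge(\log|\mathcal T|)^2$, and uses concentration to ensure every cell's point count is within a $(1\pm\varepsilon)$ factor of its mean; then a maximum independent set $I$ lies in the union $\mathcal T_I$ of cells meeting $I$, so $\alpha(G)\le(1+\varepsilon)\lambda\,\mathrm{svol}_{d-1}(\mathcal T_I)$ while $|G|\ge(1-\varepsilon)\lambda\,\mathrm{svol}_{d-1}(\mathbb S^{d-1})$, and the $(1+\varepsilon)/(1-\varepsilon)$ loss is absorbed into $\delta$. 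If you prefer a deterministic construction in your style, replace the maximal packing by a tessellation into cells of \emph{equal} area $a$ and spherical diameter at most $\delta$, and place exactly one vertex in each cell: then $|G|=\mathrm{svol}_{d-1}(\mathbb S^{d-1})/a$, distinct vertices of $I$ lie in distinct cells so $\alpha(G)\le \mathrm{svol}_{d-1}(\mathcal T_I)/a\le \mathrm{svol}_{d-1}(\mathrm{Cap}^{d-1}(\arcsin(x^{-1})+\delta))/a$ by Corollary~\ref{thm sph cap}, and the factors of $a$ cancel exactly, with no packing-versus-covering discrepancy.
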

\begin{proof}
We show that for every sufficiently small $\varepsilon = \varepsilon(\delta, d) > 0$, there is an $\varepsilon$-net $\mathcal N_{\varepsilon}$ in $\mathbb S^{d-1}$ which is the vertex set of an annulus graph with the required property. Fix a small enough $\varepsilon$ and
consider a tessellation $\mathcal T$ of $\mathbb S^{d-1}$ into regions of spherical diameter at most $\varepsilon$ and area at least $a = a(\delta, d, \varepsilon) > 0$. Consider a Poisson Point Process $\mathcal P$ with intensity $\lambda = \lambda(\delta, d, \varepsilon, a) > 0$ on the sphere satisfying that $\lambda a \ge (\log |\mathcal T|)^2$, where $|\mathcal T|$ stands for the number of regions in $\mathcal T$. By using well-known estimates for the tails of Poisson random variables (see e.g. Theorem~A.1.15 in~\cite{AS16}) gives that
\begin{align*}
& \mathbb P(\exists R\in \mathcal T: |R\cap \mathcal P| - \mathbb E |R\cap \mathcal P|\notin (-\varepsilon \mathbb E |R\cap \mathcal P|, \varepsilon \mathbb E |R\cap \mathcal P|))\\
\le\hspace{0.3em} 
& |\mathcal T| \max_{R\in \mathcal T} \mathbb P(|R\cap \mathcal P| - \mathbb E |R\cap \mathcal P|\notin (-\varepsilon \mathbb E |R\cap \mathcal P|, \varepsilon \mathbb E |R\cap \mathcal P|))\\
\le\hspace{0.3em} 
& |\mathcal T|\exp(-\Omega_{\varepsilon}(\log |\mathcal T|)^2) = o_{|\mathcal T|}(1).
\end{align*}
Thus, every region $R\in \mathcal T$ contains whp a number of points that is in the interval $((1-\varepsilon) \mathbb E |R\cap \mathcal P|, (1+\varepsilon) \mathbb E |R\cap \mathcal P|)$ and, in particular, at least one point. We condition on this event and set $\mathcal N_{\varepsilon} = \mathcal P$. Let $G$ be the $(2/x,2)$-annulus graph on vertex set $\mathcal N_{\varepsilon}$ and $I$ be any maximum independent set of $G$. Also, let $\mathcal T_I$ be the union of all regions of $\mathcal T$ containing a vertex in $I$. Then, since every region in the tessellation $\mathcal T$ has spherical diameter at most $\varepsilon$, 
the spherical diameter of $\mathcal T_I$ is at most $\varepsilon + 2\arcsin(x^{-1}) + \varepsilon = 2(\arcsin(x^{-1}) + \varepsilon)$. Hence, by choosing $\varepsilon < \delta/2$ one may derive by Corollary~\ref{thm sph cap} that $\mathcal T_I$ has $(d-1)$-dimensional volume at most $\mathrm{svol}_{d-1}(\mathrm{Cap}^{d-1}(\arcsin(x^{-1})+\delta/2))$. We conclude that every independent set of $G$ contains at most 
\begin{equation*}
    |\mathcal N_{\varepsilon}|\dfrac{(1+\varepsilon)\,\mathrm{svol}_{d-1}(\mathrm{Cap}^{d-1}(\arcsin(x^{-1})+\delta/2))}{(1-\varepsilon)\,\mathrm{svol}_{d-1}(\mathbb S^{d-1})}
\end{equation*}
points, which, up to choosing $\varepsilon$ sufficiently small, is bounded from above by
\begin{equation*}
    |\mathcal N_{\varepsilon}|\dfrac{\mathrm{svol}_{d-1}(\mathrm{Cap}^{d-1}(\arcsin(x^{-1})+\delta))}{\mathrm{svol}_{d-1}(\mathbb S^{d-1})},
\end{equation*}
which proves the lemma.
\end{proof}

\begin{proof}[Proof of Theorem~\ref{thm 2}~\eqref{pt 2}]
Fix a sufficiently small $\delta > 0$ and the corresponding graph $G$ given in Lemma~\ref{lem net}. Then,
\begin{equation} \label{1.2eq}
\dfrac{\chi(G)}{\omega(G)}\ge \dfrac{|G|}{\alpha(G)\,\omega(G)}\geq\dfrac{\mathrm{svol}_{d-1}(\mathbb S^{d-1})}{\mathrm{svol}_{d-1}(\mathrm{Cap}^{d-1}(\arcsin(x^{-1})+\delta))\, M(d,2\arcsin(x^{-1}))},
\end{equation}
where the first inequality holds for any graph and the second inequality follows from Lemma \ref{lem net}. Choosing $\delta>0$ small enough and using Lemma \ref{lem analysis}, we can ensure that the right hand side in~\eqref{1.2eq} is bounded below by $1.003^d$, which finishes the proof.
\end{proof}

It remains to deduce the result in the case of unit disc graphs. Fix $x\in [1.2,\infty]$. Consider a unit disc graph $G$ which is constructed as the annulus graph with radii $0$ and $2/x$ on a set of points lying on the sphere $\mathbb S^{d-1}$. Then, its complement $G^c$ is a graph for which two vertices are connected if the distance between them is in the interval $(2/x, 2]$. Note that every largest clique in a graph $G$ is a largest independent set in $G^c$ (that is, the complement of $G$) and every largest independent set of $G$ is a largest clique in $G^c$. Therefore,
\begin{equation*}
    \frac{\chi(G)}{\omega(G)}\geq \frac{|G|}{\alpha(G)\,\omega(G)}= \frac{|G^c|}{\alpha(G^c)\,\omega(G^c)}.
\end{equation*}
Moreover, the above proof of Theorem~\ref{thm 2} works with minor modifications for $(R_1, R_2)$-annulus graphs for which two points are connected if they are at distance $(R_1,R_2]$, which concludes the proof.

\section{Discussion}\label{sec discussion}
We finish with a short discussion around the bounds on $\sup_{G\in \mathcal A_d(1,x)}\chi(G)/\omega(G)$ provided by Theorem~\ref{thm 2}. To begin with, showing that a similar lower bound holds for $x\in [1,1.2)$ is an obvious open question. One construction similar to ours is to connect a point $p$ (see it as ``the north pole'') with all points at distance between $x_1$ and $x_2$ where $x_2/x_1 = x$ and $x_1^2 + x_2^2 = 4$ (this is a description of the points in $\mathbb S^{d-1}$ in a strip symmetric with respect to ``the equator''). The missing piece to show that this more universal construction provides an exponential lower bound for every $x > 1$ is the following conjecture of ours, which bears close resemblance to Kalai's double-cap conjecture, see Conjecture~2.8 in~\cite{FM86}.

\begin{conjecture}
Fix some $\theta\in [0,\pi/2)$. Every measurable set $X\subseteq \mathbb S^{d-1}$ containing no two points at spherical distance in the interval $[\pi/2 - \theta, \pi/2+\theta]$ has measure at most $2|\mathrm{Cap}^{d-1}(\pi/4 - \theta/2)|$, where the unique maximiser (up to rotation) is a pair of diametrically opposite caps with spherical diameters $\pi/2-\theta$.
\end{conjecture}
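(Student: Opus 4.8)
The plan is to prove this conjecture, which asserts that a measurable set $X\subseteq \mathbb S^{d-1}$ avoiding all pairs of points at spherical distance in $[\pi/2-\theta,\pi/2+\theta]$ has measure at most that of two antipodal caps of spherical diameter $\pi/2-\theta$. First I would set up the problem in the language of spherical measures and reformulate the constraint: writing points as unit vectors $u,v$, two points are at spherical distance in $[\pi/2-\theta,\pi/2+\theta]$ precisely when their inner product $\langle u,v\rangle$ lies in the symmetric interval $[-\sin\theta,\sin\theta]$. Thus the forbidden condition is that $X$ contains no pair with $|\langle u,v\rangle|\le \sin\theta$; equivalently, every pair of points in $X$ satisfies $|\langle u,v\rangle| > \sin\theta$. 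This is a ``near-orthogonality is forbidden'' condition, which is exactly the structure that makes the double-cap configuration natural: a pair of antipodal caps around $\pm e_1$ of angular radius $\pi/4-\theta/2$ has the property that any two of its points are either both near $e_1$ (so nearly parallel, large positive inner product) or near opposite poles (large negative inner product), never near-orthogonal.

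The natural line of attack, mirroring Kalai's double-cap conjecture, is a symmetrisation or spectral argument. I would first attempt a \emph{two-point symmetrisation} (polarisation) reduction: repeatedly reflecting mass across hyperplanes to push the set toward a rotationally symmetric configuration about a fixed axis while preserving measure and not creating forbidden pairs. If the extremal configuration can be shown to be invariant under rotations fixing the $e_1$-axis, the problem collapses to a one-dimensional question about the distribution of the ``height'' $t=\langle u,e_1\rangle$, where the forbidden condition becomes a constraint coupling heights $t,t'$ and an azimuthal angle. The second, and I expect cleaner, approach is \emph{linear-programming / Fourier analysis on the sphere}: expand the indicator $\mathbf 1_X$ in spherical harmonics (Gegenbauer/ultraspherical polynomials) and use the fact that the forbidden-distance constraint forces the autocorrelation measure $\mu(s)=\int_{\mathbb S^{d-1}\times \mathbb S^{d-1}} \mathbf 1_X(u)\mathbf 1_X(v)\,\delta(\langle u,v\rangle - s)$ to vanish on $(-\sin\theta,\sin\theta)$. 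One then seeks a witness function $f(s)=\sum_k f_k G_k^{(d)}(s)$ with $f_k\ge 0$ for $k\ge 1$, $f(s)\le 0$ on the allowed range $s\in[-1,-\sin\theta]\cup[\sin\theta,1]$, and $f_0>0$, so that integrating $f(\langle u,v\rangle)$ against $\mathbf 1_X\otimes \mathbf 1_X$ yields $f_0\,|X|^2 \le (\text{positive harmonic terms})\le$ a bound forcing $|X|\le 2|\mathrm{Cap}^{d-1}(\pi/4-\theta/2)|$, with equality detecting the double-cap.

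The main obstacle, and the reason this remains a conjecture, is precisely the construction of the certificate and the verification of \emph{tightness} of the double cap. For the original Kalai double-cap conjecture (the case $\theta=0$, forbidding exactly orthogonal pairs), no LP/Fourier witness is known to be tight in general dimension, and the symmetrisation approach famously fails to converge to the conjectured extremiser because two-point symmetrisation can destroy the forbidden-distance property when the forbidden set is a single value rather than an interval. Here the forbidden set is a genuine interval $[\pi/2-\theta,\pi/2+\theta]$ of positive width, which is \emph{more} favourable for symmetrisation (small perturbations of the set preserve the open constraint), so I would expect the polarisation route to have a better chance than in the $\theta=0$ case; the delicate point is showing that symmetrisation never merges two allowed points into a forbidden pair and that the limiting symmetric set is exactly the antipodal double cap rather than, say, an equatorial band complement. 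Concretely, I would try to (i) reduce to centrally symmetric $X$ by replacing $X$ with $X\cup(-X)$ appropriately, since the constraint is symmetric under $u\mapsto -u$, (ii) apply iterated spherical polarisation to concentrate mass around the poles, and (iii) solve the resulting axially symmetric one-dimensional optimisation, with the isodiametric bound of Corollary~\ref{thm sph cap} controlling each symmetric ``slice.'' I expect step (ii)—proving that polarisation preserves the open interval constraint and converges to the double cap—to be where essentially all the difficulty lies, and it is plausible that the conjecture is genuinely open in high dimensions precisely because this step has no known unconditional proof.
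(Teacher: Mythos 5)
The statement you were asked to prove is recorded in the paper as a \emph{conjecture}: the authors give no proof of it, and they explicitly relate it to Kalai's double-cap conjecture, which is open. So there is no paper proof to compare against, and the only question is whether your proposal actually settles the problem. It does not, and you say so yourself. What you have written is a research plan: the inner-product reformulation (spherical distance in $[\pi/2-\theta,\pi/2+\theta]$ if and only if $\langle u,v\rangle\in[-\sin\theta,\sin\theta]$) is correct, and the two routes you outline --- two-point symmetrisation/polarisation, and a Delsarte-style linear-programming certificate expanded in Gegenbauer polynomials --- are indeed the standard attacks on forbidden-distance problems on $\mathbb S^{d-1}$. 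But neither route is carried out: you never construct a witness function $f$ with the required sign pattern and positivity of harmonic coefficients, you never prove that polarisation preserves the forbidden-interval constraint or that it converges to an axially symmetric set, and you never solve the resulting one-dimensional optimisation. Your closing sentences concede that the decisive step ``has no known unconditional proof,'' which is an admission that the argument is incomplete, not a proof.

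Concretely, the gaps are: (i) for the LP route, what is missing is exactly the hard part --- a certificate that is \emph{tight} for the pair of antipodal caps of spherical diameter $\pi/2-\theta$; even in the case $\theta=0$, known LP and eigenvalue-type bounds (Witsenhausen-style arguments and their refinements) do not reach the double-cap value, so ``one seeks a witness function'' produces nothing; (ii) for the symmetrisation route, two-point symmetrisation can move a pair of points from allowed distances into the forbidden band, and your remark that a forbidden interval of positive width makes this ``more favourable'' is a heuristic with no supporting lemma; (iii) even granting reduction to axially symmetric sets, you do not rule out competitors such as unions of several latitudinal bands, nor do you address the uniqueness assertion in the statement (that the double cap is the unique maximiser up to rotation), which would require a separate stability/equality analysis on top of any bound. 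In short: the proposal identifies sensible directions, and its preliminary reformulations are sound, but it contains no proof of any part of the conjecture --- consistent with the fact that the paper itself leaves it open.
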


We remark that an approach similar to ours (but providing a worse lower bound for $x$) may be applied when $\theta$ is ``close'' to $\pi/2$ but not when $\theta$ is ``small''. It is worth observing that when $x = 1$ (that is, in the case of the unit distance graphs), we have that the largest cliques have size $d + 1$ while the chromatic number of a unit-distance graph may be larger than $1.2^d$ for all sufficiently large $d$ by a result of Frankl and Wilson~\cite{FW81}. 
Nevertheless, a randomised construction embedding points on the $d$-dimensional sphere with radius slightly larger than $1/\sqrt{2}$ uniformly at random implies that $(1,x)$-annulus graphs may contain cliques with exponential (in $d$) number of vertices for all $x > 1$.

Theorem~\ref{thm 2} shows that $\sup_{G\in \mathcal A_d(1,x)} \chi(G)/\omega(G)$ grows exponentially fast with $d$, which is somehow satisfactory for high dimensional annulus graphs. However, although it is easy to improve Lemma~\ref{lemcol} by a factor of 2, our approach seems to provide an upper bound that is far from optimal for small values of $d$. In the particular case of $\mathcal A_2(0,1)$ a simplified version of our algorithm coincides with the one used by Peeters~\cite{Pee91}, so in particular every graph $G\in \mathcal A_2(0,1)$ is shown to satisfy $\chi(G)\le 3\omega(G)-2$. However, if $\omega(G)=2$, we claim that $\chi(G)\le 3$: indeed, for any embedding of $G$ in $\mathbb R^2$ witnessing that $G\in \mathcal A_2(0,1)$, no two edges on four different vertices may intersect since otherwise by triangle inequality $G$ must contain a triangle with three out of these four vertices. Thus, all triangle-free graphs in $\mathcal A_2(0,1)$ are planar, so also 3-colourable by a theorem of Gr\"otzsch~\cite{Gro58} -- a bound attained by any cycle of odd length.

On the other hand, Malesi{\'n}ska, Piskorz and Wei{\ss}enfels~\cite{Mal96} showed that for every $\omega\in \mathbb N$ there is a graph $G\in \mathcal A_2(0,1)$ satisfying $\omega(G) = \omega$ and $\chi(G)\ge \lfloor 3\omega/2\rfloor$. Closing the gap between the lower and the upper bounds is a long-standing open problem.

\paragraph{Acknowledgements.} We are especially grateful to several anonymous referees for numerous useful comments and suggestions.

\bibliographystyle{plain}
\bibliography{References}

\end{document}